\newtheorem{theorem}{Theorem}[section]
\newtheorem{lemma}{Lemma}[section]
\newtheorem{proposition}{Proposition}[section]
\newtheorem{definition}{Definition}[section]
\newtheorem{corollary}{Corollary}[section]
\newtheorem{remark}{Remark}[section]
\newtheorem{example}{Example}[section]
\newtheorem*{ack}{Acknowledgments}
\begin{document}
\title[Generalized Alexandrov theorems in spacetimes with integral conditions]{Generalized Alexandrov theorems in spacetimes with integral conditions}
\author[K.-K. Kwong]{Kwok-Kun Kwong}
\address{School of Mathematics and Applied Statistics, University of Wollongong, NSW 2522, Australia}
\email{\href{mailto:kwongk@uow.edu.au}{kwongk@uow.edu.au}}

\author[X. Wang]{Xianfeng Wang}
\address{School of Mathematical Sciences and LPMC, Nankai University,
Tianjin 300071, P.R. China. }
\email{\href{mailto:wangxianfeng@nankai.edu.cn}{wangxianfeng@nankai.edu.cn}}
\keywords {Spacetime Alexandrov theorem, weighted Minkowski formula, mean curvature, conformal Killing-Yano two-form}
\begin{abstract}
We investigate integral conditions involving the mean curvature vector $\vec{H}$ or mixed higher-order mean curvatures, to determine when a codimension-two submanifold $\Sigma$ lies on a shear-free (umbilical) null hypersurface in a spacetime. We generalize the Alexandrov-type theorems in spacetime introduced in \cite{wang2017Minkowski} by relaxing the curvature conditions on $\Sigma$ in several aspects. Specifically, we provide a necessary and sufficient condition, in terms of a mean curvature integral inequality, for $\Sigma$ to lie in a shear-free null hypersurface. A key component of our approach is the use of Minkowski formulas with arbitrary weight, which enables us to derive rigidity results for submanifolds with significantly weaker integral curvature conditions.
\end{abstract}

\maketitle

\section{Introduction}
The study of spacetime geometries has been a central topic of interest in the field of general relativity. Understanding the causal future or past of a given geometric object is crucial in providing insights into the nature of the surrounding spacetime. Codimension-two submanifolds play a significant role in general relativity, with their null expansions intricately connected to gravitational energy, as demonstrated in Penrose's singularity theorem \cite{penrose1965gravitational}. One of the key problems in this field is determining when a given codimension-two submanifold lies within the null hypersurface generated by a ``round sphere''. Such a null hypersurface is referred to as ``shear-free'' (Definition \ref{shear free}), and is analogous to an umbilical hypersurface in Riemannian geometry. This problem is of physical significance as it has implications for our understanding of the geometry of spacetime.

Moreover, this problem can be regarded as an extension of the problem of characterizing round spheres in some Riemannian manifolds, which are collectively referred to as Alexandrov-type theorems. Indeed, since Euclidean space and hyperbolic space are both spacelike hypersurfaces in the Minkowski spacetime, it is natural to ask whether the Alexandrov theorems in these two space forms can be obtained by a more general rigidity result concerning codimension-two submanifolds in the Minkowski or other more general spacetimes. Answering this question could yield a more unified approach to understanding the geometry of different space forms.

In this paper, we investigate the conditions on the mean curvature vector $\vec{H}$ or mixed higher order mean curvatures that ensure a codimension-two submanifold $\Sigma$ lies in a shearfree null hypersurface. Our results generalize a number of the Alexandrov type theorems in spacetime introduced in \cite{wang2017Minkowski}, by relaxing the curvature conditions on $\Sigma$.
Specifically, one of our main results, Theorem \ref{alex}, provides a necessary and sufficient condition in terms of the mean curvature vector, to ensure that $\Sigma$ lies in a shear-free null hypersurface. These results have applications in the study of the null geometry of submanifolds, which is relevant to the behavior of physical systems in curved spacetimes, such as the divergence of light rays emanating from a submanifold. Moreover, we believe that the weighted Minkowski formulas developed here provide a new tool for future studies on curved spacetimes with symmetry.
We notice that in \cite{hijazi2019Alexandrov},  the results in \cite{wang2017Minkowski} were also generalized by relaxing the assumption on the incoming null smoothness of $\Sigma$.

Our work is motivated by two sources in addition to \cite{wang2017Minkowski}. The first is the weighted Minkowski formulas \cite{kwong2016extension, kwong2018weighted} developed by Lee, Pyo, and the first author. These formulas allow for the introduction of an arbitrary weight to the classical Minkowski formulas, giving more flexibility to their application. For example, the weighted Minkowski formulas can be used to prove an Alexandrov type theorem in standard space forms \cite{kwong2018weighted}, with the assumption that the mean curvature is a non-increasing function in the radial distance $r$ from a fixed point, rather than being constant.
The second source of motivation is the recent work by Gao and Ma \cite{gao2021characterizations} on the proof of Jellet-Liebmann and Alexandrov type theorems. In their work, among other things, the authors use an inequality assumption involving the gradient of the mean curvature $H$ to prove an Alexandrov-type theorem, instead of depending on the constancy of $H$. Both \cite{kwong2016extension, kwong2018weighted} and \cite{gao2021characterizations} aim to relax the requirement of constant mean curvature or higher order mean curvatures for hypersurfaces, which guarantee their roundness.

Compared to \cite{wang2017Minkowski}, our assumptions are much weaker in several aspects. To see this, let us first state one of our main results.
\begin{theorem}[Theorem \ref{alex}]\label{thm intro1}
Let $V$ be a spherically symmetric spacetime as in \nameref{assA} and $\Sigma$ be a
closed embedded, spacelike codimension-two submanifold in $V$ such that $\vec H$ is spacelike.
Assume $\Sigma$ is future incoming null embedded. Then $\Sigma$ satisfies
\begin{equation}\label{cond intro}
\int_\Sigma \frac{1}{|\vec{H}|^{2}} Q\left(\nabla(\log |\vec{H}|)-\sum_{a=1}^{n-1} \alpha_{\vec{H}}\left(e_{a}\right) e_{a}, \vec{H}+\vec{J}\right)d\mu\le 0
\end{equation}
if and only if $\Sigma$ lies in an incoming shear-free null hypersurface.
\end{theorem}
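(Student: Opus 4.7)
My plan is to follow a two-step Heintze--Karcher/Minkowski strategy, adapted to the codimension-two spacetime setting of \cite{wang2017Minkowski} and enhanced by the weighted Minkowski formulas of \cite{kwong2016extension, kwong2018weighted}. The forward (\emph{if}) direction should be direct: if $\Sigma$ lies on an incoming shear-free null hypersurface $\mathcal N$, then along $\mathcal N$ the incoming null second fundamental form is a scalar multiple of the induced metric, which, combined with the spherical symmetry of $V$ and the conformal Killing--Yano two-form used to define $\vec J$, pins down a pointwise relation among $\vec H$, $\vec J$, and $\alpha_{\vec H}$. I expect a direct computation to show that the two factors in the $Q$-pairing of \eqref{cond intro} align so that the integrand vanishes pointwise, giving equality.

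For the converse (\emph{only if}), I would proceed in three steps. \emph{Step 1.} Derive a \emph{weighted} spacetime Minkowski identity by inserting an arbitrary weight $\varphi$ into the divergence computation that produced the formulas in \cite{wang2017Minkowski}; schematically it should read
$$\int_\Sigma \varphi \bigl[(n-1) + Q(\vec J,\vec H)\bigr]\, d\mu \;=\; \int_\Sigma Q\bigl(\nabla\varphi + \text{normal-connection correction},\; \vec J\bigr)\, d\mu,$$
where the correction arises when $\varphi$ depends on $\vec H$ itself and the tangential derivative is commuted past the projection onto $\vec H/|\vec H|$, producing exactly the $\alpha_{\vec H}$-type term in \eqref{cond intro}. \emph{Step 2.} Establish a sharp spacetime Heintze--Karcher inequality by propagating a support function along the incoming null geodesics emanating from $\Sigma$ (this is where the \emph{future incoming null embedded} hypothesis is essential), combining the Riccati equation along the null generators with the spherical symmetry to obtain something of the form
$$\int_\Sigma \frac{Q(\vec J,\vec H)}{|\vec H|^2}\, d\mu \;\ge\; \int_\Sigma \frac{n-1}{|\vec H|}\, d\mu,$$
with equality iff $\Sigma$ lies in an incoming shear-free null hypersurface. \emph{Step 3.} Choose the weight appropriately (of the form $\varphi = 1/|\vec H|$ or $\log|\vec H|$) and subtract, arriving at
$$\int_\Sigma \frac{1}{|\vec H|^2} Q\!\left(\nabla \log|\vec H| - \sum_{a=1}^{n-1}\alpha_{\vec H}(e_a)\,e_a,\; \vec H + \vec J\right) d\mu \;\ge\; 0,$$
with equality precisely when $\Sigma$ is shear-free. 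Hypothesis \eqref{cond intro} then forces equality, and the rigidity clause of the Heintze--Karcher inequality yields the conclusion.

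The main obstacle is Step 2: formulating and proving the correct sharp spacetime Heintze--Karcher inequality with the weight $1/|\vec H|^2$. Classical Heintze--Karcher is Riemannian and relies on a parallel foliation by equidistant hypersurfaces; here equidistance must be replaced by null propagation, and one must control the evolution of the null expansion through a Riccati equation along the generators of the incoming null hypersurface, using the radial monotonicity coming from \nameref{assA} to guarantee the correct sign in the comparison, and one must verify that the equality case is precisely the shear-free condition. A secondary technical point is the algebraic bookkeeping in Step 1 that produces the exact integrand of \eqref{cond intro}, including the correction $\sum_a \alpha_{\vec H}(e_a)\,e_a$, which requires careful use of the normal-bundle connection and a clean splitting of $\nabla|\vec H|$ into tangential and normal-connection contributions.
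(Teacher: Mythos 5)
Your proposal follows essentially the same route as the paper: insert the weight $f=1/|\vec H|$ into the weighted spacetime Minkowski formula so that the resulting identity expresses the left-hand side of the spacetime Heintze--Karcher inequality as exactly the integral in \eqref{cond intro}, then invoke the rigidity case of that inequality. The only substantive difference is that your ``main obstacle'' (Step 2) is not an obstacle in the paper --- the needed Heintze--Karcher inequality, whose correct form involves $-\left\langle\frac{\partial}{\partial t},\vec H+\vec J\right\rangle/|\vec H|^2$ rather than the $(n-1)/|\vec H|$ in your sketch, is quoted directly from \cite[Theorem 3.12]{wang2017Minkowski}, and both directions of the equivalence (including your separate pointwise ``if'' direction) drop out of the single identity: condition $\le 0$ forces equality in Heintze--Karcher, while condition $>0$ forces strict inequality.
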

Here, $Q$ is a conformal Killing-Yano two-form, $\alpha_{\vec H}$ is the connection one-form in the mean curvature gauge, $\{e_a\}_{a=1}^{n-1}$ is a local orthonormal frame on $\Sigma$, and $\vec{J}$ is the reflection of $\vec{H}$ along the incoming light cone. The precise definitions of these terms will be given in Section \ref{sec mink}.
Important examples of spacetimes satisfying \nameref{assA} include the exterior Schwarzschild spacetime and the anti-deSitter spacetime, both with $Q=r d r \wedge d t$ in suitable coordinates.

To understand why our condition \eqref{cond intro} is weaker compared to \cite{wang2017Minkowski} and why it is physically relevant, let us focus on the conditions given in \cite[Theorem 3.14]{wang2017Minkowski} for a codimension-two submanifold $\Sigma$ in a static (cf. \cite[p. 119]{wald} for definition) spherically symmetric spacetime to lie in a shear-free null hypersurface.
The main condition in \cite[Theorem 3.14]{wang2017Minkowski} is that for a future incoming null normal vector field $\underline{L}$ along the submanifold $\Sigma$ such that $\langle\vec{H}, \underline{L}\rangle$ is a positive constant and $(D \underline{L})^{\perp}=0$ on $\Sigma$. This can also be written as the pointwise condition $d \log |\vec{H}|-\alpha_{\vec{H}}=0$. This condition can be viewed as the Lorentzian counterpart of the assumption of triviality of the normal bundle of a hypersurface and the constant mean curvature condition $H=\mathrm{constant}$ in Riemannian geometry.
In contrast, our condition \eqref{cond intro} is an integral inequality, and is thus much weaker than any pointwise constant condition. Moreover, our formulation involves the quantity $\nabla(\log |\vec{H}|)-\sum_{a=1}^{n-1} \alpha_{\vec{H}}\left(e_a\right) e_a$, which we believe is natural as it does not rely on the choice of a null vector field, which is defined only up to scaling by a positive function. We will provide the precise definitions of each term in the next section.

Before we delve into the idea of the proof of the main results, let us take a moment to consider the advantages of our assumption from a physics perspective. Our assumption represents both an integral and an inequality condition, which are more practical and stable than a pointwise constant condition. This is because it is infeasible to measure every point on a submanifold, and there is always some imprecision in measurement. In practice, measurements can only be taken by sampling points. The integral in \eqref{cond intro} allows for approximation by taking measurements over a sufficiently large number of sample points. Moreover, the inequality condition in \eqref{cond intro} allows room for imprecision, making it more physically relevant. Thus, our result not only generalizes previous work but also provides a more stable and physically relevant condition.

Let us now explain the idea of our results. To prove Theorem \ref{thm intro1}, we will need a weighted version of the spacetime Minkowski formula. This formula extends \cite[Theorem A]{wang2017Minkowski} by allowing an arbitrary weight on the curvature integral, and can also be regarded as the Lorentzian version of \cite[Theorem 1.1]{kwong2016extension}:
\begin{theorem}[Theorem \ref{mink2}]\label{thm intro2}
Let $\Sigma$ be a closed immersed oriented spacelike codimension-two submanifold in an $(n+1)$-dimensional Riemannian or Lorentzian manifold $V$ that possesses a conformal Killing-Yano two-form $Q$. Assume the mean curvature vector $\vec H$ of $\Sigma$ is spacelike. Then for any smooth function $f$ defined on $\Sigma$, we have
\begin{equation}\begin{split} \label{spacetime mink 1}
&\frac{n-1}{n} \int_{\Sigma} \frac{f}{|\vec{H}|}\left\langle\xi, \pm\vec{H}+\vec{J}\right\rangle d \mu+\int_{\Sigma} \frac{f}{|\vec{H}|} Q\left(\vec{H}, \vec{J}\right) d \mu+\int_{\Sigma} \frac{1}{|\vec{H}|} Q\left(\nabla f \pm f \sum_{a=1}^{n-1}\alpha_{\vec{H}}\left(e_{a}\right) e_{a}, \pm\vec{H}+\vec{J}\right) d \mu\\
&=0.
\end{split}\end{equation}
\end{theorem}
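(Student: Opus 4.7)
The plan is to adapt the divergence-theorem proof of the Riemannian weighted Minkowski formula \cite[Theorem 1.1]{kwong2016extension} to the Lorentzian codimension-two setting. For each choice of the sign $\pm$, I introduce the tangent one-form on $\Sigma$
\[
\omega(X)\;=\;\frac{f}{|\vec H|}\,Q(X,\,\pm\vec H+\vec J),\qquad X\in T\Sigma,
\]
metrically dualize it to a tangent vector field $\omega^{\sharp}$, and invoke Stokes' theorem: $\int_{\Sigma}\operatorname{div}_{\Sigma}\omega^{\sharp}\,d\mu=0$. Working at a fixed point of $\Sigma$ in $\Sigma$-normal coordinates, so that $\nabla_{e_a}^{\Sigma}e_a=0$ and the ambient derivative $\nabla_{e_a}^{V}e_a=h(e_a,e_a)$ is purely normal, the product rule splits $\operatorname{div}_{\Sigma}\omega^{\sharp}$ into five natural groups, arising from the derivatives of $f$, of $|\vec H|^{-1}$, of the two-form $Q$, of the tangent frame vectors $e_a$, and of the normal vector fields $\vec H$ and $\vec J$ along $\Sigma$, respectively.

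Three of these groups match the three terms of \eqref{spacetime mink 1} more or less directly. The $f$-group contributes $\frac{1}{|\vec H|}Q(\nabla f,\pm\vec H+\vec J)$. The $e_a$-group contributes $\sum_a\frac{f}{|\vec H|}Q(h(e_a,e_a),\pm\vec H+\vec J)=\frac{f}{|\vec H|}Q(\vec H,\vec J)$, using $\sum_a h(e_a,e_a)=\vec H$ and the antisymmetry $Q(\vec H,\vec H)=0$. The $Q$-group $\frac{f}{|\vec H|}\sum_a(\nabla^V_{e_a}Q)(e_a,\pm\vec H+\vec J)$ is processed via the CKY defining equation for $Q$: its three-form piece $dQ(e_a,e_a,\cdot)$ vanishes after symmetrizing in $e_a$, and the remaining trace piece, evaluated with $e_a$ tangent and $\pm\vec H+\vec J$ normal so that the mixed metric term drops, collapses to the leading coefficient $\frac{n-1}{n}\langle\xi,\pm\vec H+\vec J\rangle$.

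The principal obstacle is combining the remaining two groups into the $\alpha_{\vec H}$ correction. The tangential parts of $\nabla^V_{e_a}\vec H$ and $\nabla^V_{e_a}\vec J$, being shape-operator images, are annihilated upon contraction with $Q(e_a,\cdot)$ thanks to the antisymmetry of $Q$ paired with the symmetry of the second fundamental form. The normal parts split into a ``radial'' piece involving $\nabla\log|\vec H|$ and a ``boost'' piece involving $\alpha_{\vec H}$; because the Lorentzian normal bundle has signature $(1,1)$, the connection one-forms of the unit spacelike normal attached to $\vec H$ and the unit timelike normal attached to $\vec J$ coincide up to a controlled sign, which is the key input for the subsequent cancellation. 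Careful sign tracking then shows that the $\nabla\log|\vec H|$ contributions from the $|\vec H|^{-1}$-group and from the normal-vector group cancel exactly for the consistent pairing of $\pm$ in $\pm\vec H+\vec J$, while the $\alpha_{\vec H}$ contributions survive and amalgamate with the $f$-group into the single expression $\frac{1}{|\vec H|}Q\bigl(\nabla f\pm f\sum_a\alpha_{\vec H}(e_a)e_a,\,\pm\vec H+\vec J\bigr)$. Summing the five groups gives the asserted identity \eqref{spacetime mink 1}.
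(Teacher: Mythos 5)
Your proposal is correct and is essentially the paper's argument: both compute the tangential divergence of the one-form $X\mapsto f\,Q(X,\underline L)$ with $\underline L=\frac{1}{|\vec H|}(\pm\vec H+\vec J)$ and integrate, the only organizational difference being that the paper first records the identity for a general null normal $\underline L$ (formula \eqref{2.2}) and then substitutes, quoting \cite[Proposition 3.4]{wang2017Minkowski} for $(D_{e_a}\underline L)^{\perp}=\pm\frac{\alpha_{\vec H}(e_a)}{|\vec H|}(\pm\vec H+\vec J)$, whereas you derive that same cancellation of the $\nabla\log|\vec H|$ terms and survival of the $\alpha_{\vec H}$ terms directly. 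All of your five groups, including the CKY trace yielding $\frac{n-1}{n}\langle\xi,\pm\vec H+\vec J\rangle$ and the annihilation of the tangential shape-operator parts by the antisymmetry of $Q$, check out.
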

This formula is the Lorentzian version of the following weighted Minkowski formula \cite{kwong2016extension} in $\mathbb R^{n}$: for any smooth function $f$ on a closed hypersurface $\Sigma$, we have
\begin{equation}\begin{split}\label{rn mink}
\int_\Sigma f d\mu =\int_\Sigma f H_1\langle X, \nu\rangle d\mu -\frac{1}{n-1}\int_\Sigma\langle \nabla f, X\rangle d\mu,
\end{split}\end{equation}
where $X$ is the position vector, $H_1$ is the normalized mean curvature, and $\nu$ is the unit outward normal. In fact, if $V$ is the Minkowski spacetime and $\Sigma$ lies in a spacelike hyperplane, then \eqref{spacetime mink 1} is reduced to \eqref{rn mink}.

The next ingredient is a spacetime Heintze-Karcher type inequality developed in \cite{wang2017Minkowski}. For our purpose, we prefer to write it in the following form.
\begin{theorem}[Theorem \ref{HK ineq2}, {\cite[Theorem 3.12]{wang2017Minkowski}}]\label{thm intro3}
Suppose $\Sigma$ is a future incoming null embedded spacelike codimension-two submanifold satisfying the assumption in Theorem \ref{thm intro1}. Then we have
\begin{align}\label{hk spacetime}
-(n-1) \int_{\Sigma} \frac{1}{|\vec{H}|^{2}} \left\langle\frac{\partial }{\partial t }, \vec{H}+\vec{J}\right\rangle d \mu+ \int_{\Sigma} \frac{1}{|\vec{H}|^{2}} Q(\vec{H}, \vec{J}) d \mu \ge 0.
\end{align}
Moreover, the equality holds if and only if $\Sigma$ lies in an incoming shear-free null hypersurface.
\end{theorem}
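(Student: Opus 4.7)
The plan is to combine the null Raychaudhuri monotonicity along the incoming null hypersurface ruled by $\Sigma$ with the algebraic identities satisfied by the conformal Killing-Yano two-form $Q$ on the spherically symmetric spacetime $V$, following the Heintze-Karcher-Ros paradigm adapted to the null setting.

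Since $\Sigma$ is future incoming null embedded, I first construct the incoming null hypersurface $\mathcal{N}$ by flowing $\Sigma$ along a future-directed incoming null geodesic congruence with initial tangent $\underline{L}$, normalized so that $\langle \vec{H}, \underline{L}\rangle$ is conveniently fixed (for example proportional to $|\vec{H}|$). Embeddedness guarantees $\mathcal{N}$ is smooth from $\Sigma$ out to its vertex, where the incoming expansion $\theta = \mathrm{tr}\,\underline{\chi}$ diverges. Along each generator, the null Raychaudhuri equation gives $\partial_s \theta + \frac{1}{n-1}\theta^2 = -|\widehat{\underline{\chi}}|^2 - \mathrm{Ric}(\underline{L},\underline{L}) \le 0$, where the inequality uses the null energy condition built into assumption $(A)$. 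Integrating from $\Sigma$ to the vertex yields a sharp pointwise estimate for $\theta$ at $\Sigma$, with equality iff $\widehat{\underline{\chi}}\equiv 0$ and the Ricci contribution vanishes along $\mathcal{N}$---i.e., iff $\mathcal{N}$ is incoming shear-free. This immediately gives the equality characterization in the theorem.

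The final step is to translate this scalar estimate into the covariant integral form \eqref{hk spacetime}. For this I would invoke the defining identity for the CKY two-form $Q$, which in a spherically symmetric spacetime couples $Q$ to the Killing vector $\partial_t$, together with the reflection identity: writing $\vec{H} = aL + b\underline{L}$ in a null frame with $\langle L,\underline{L}\rangle = -2$, one has $\vec{J} = bL + a\underline{L}$, so $\vec{H} + \vec{J} = (a+b)(L+\underline{L})$ is aligned with $\partial_t$ up to a scalar factor. Multiplying the pointwise Raychaudhuri estimate by $|\vec{H}|^{-2}$ and integrating over $\Sigma$, the $\theta$-term yields $\langle \partial_t, \vec{H} + \vec{J}\rangle$ while the remaining off-diagonal contribution produces $Q(\vec{H}, \vec{J})$; assembling these gives \eqref{hk spacetime}. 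The main obstacle is executing this transcription precisely: one must match the normalization of $\underline{L}$ and the affine parameter against the CKY structure, and verify that no extraneous boundary terms survive at the vertex of $\mathcal{N}$. This is where the structure of spherically symmetric spacetimes verifying $(A)$ plays a decisive role, and it is why the original argument in \cite{wang2017Minkowski} works in the mean-curvature gauge and exploits the divergence structure of $Q$.
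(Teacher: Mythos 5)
The paper does not reprove this inequality: its ``proof'' of Theorem \ref{thm intro3} consists of quoting \cite[Theorem 3.12]{wang2017Minkowski} in the form \eqref{3.14} and then substituting the mean-curvature-gauge null frame $\underline{L}=\frac{1}{|\vec H|}(\vec H+\vec J)$, $L=\frac{1}{|\vec H|}(-\vec H+\vec J)$, for which $\langle L,\underline L\rangle=-2$, $\langle\vec H,\underline L\rangle=|\vec H|$ and $-\frac12 Q(L,\underline L)=\frac{1}{|\vec H|^2}Q(\vec H,\vec J)$. Your proposal instead attempts to prove the underlying Heintze--Karcher inequality from scratch, and as written it has two concrete problems.

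First, your ``reflection identity'' is wrong: if $\vec H=aL+b\underline L$ with $\langle L,\underline L\rangle=-2$, then the defining properties $\langle\vec J,\vec H\rangle=0$ and $\langle\vec J,\vec J\rangle=-\langle\vec H,\vec H\rangle$ force $\vec J=\pm(-aL+b\underline L)$, not $bL+a\underline L$ (the latter gives $\langle\vec J,\vec H\rangle=-2(a^2+b^2)\neq 0$). Consequently $\vec H+\vec J=2b\,\underline L$ is a \emph{null} vector proportional to the incoming null normal; it is not ``aligned with $\partial_t$ up to a scalar factor'', and the step in which the $\theta$-term is supposed to produce $\langle\partial_t,\vec H+\vec J\rangle$ does not go through as described. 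Second, a pure Raychaudhuri/focusing argument along $\mathcal N$ up to a ``vertex'' cannot deliver either the inequality or its rigidity statement: in general spacetimes satisfying \nameref{assA} the incoming null hypersurface need not focus at a vertex at all (it may instead meet the horizon). The actual argument of \cite{wang2017Minkowski} uses the null convergence condition to show that the quantity $-(n-1)\int\langle\partial_t,\underline L\rangle/\langle\vec H,\underline L\rangle-\frac12\int Q(L,\underline L)$ is monotone as one flows $\Sigma$ along the incoming null hypersurface to its intersection $S$ with a totally geodesic time slice $\{t=T\}$; the hypothesis ``future incoming null embedded'' is there precisely so that $S$ is a smooth embedded hypersurface of the Riemannian warped product slice, to which Brendle's Heintze--Karcher inequality \cite{brendle2013constant} is then applied. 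Both the $Q(\vec H,\vec J)$ term (arising from the conformal Killing--Yano equation in the monotonicity computation) and the equality case (via Brendle's rigidity, which identifies $S$ as umbilical and hence the null hypersurface as shear-free) come from this two-step structure, which your sketch omits.
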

This is a far-reaching generalization of the following Heintze-Karcher inequality in $\mathbb R^{n}$: for a smooth embedded mean-convex hypersurface $\Sigma$ enclosing $\Omega$, we have
\begin{equation}\label{hk intro}
\int_{\Sigma} \langle X, \nu\rangle d\mu \le \int_{\Sigma}\frac{1}{H_1} d\mu.
\end{equation}
The equality holds if and only if $\Sigma$ is a round sphere.

In order to facilitate the discussion and illustrate the idea how Theorem \ref{thm intro2} and Theorem \ref{thm intro3} can be used to obtain Theorem \ref{thm intro1}, we limit our focus to the case where the ambient space is $\mathbb R^n$. This can be regarded as a special case of Theorem \ref{thm intro1} in which $\Sigma$ lies on a spacelike hyperplane in $V=\mathbb R^{n, 1}$. In this case, the condition that corresponds to \eqref{cond intro} is
\begin{equation}\label{cond rn}
\int_{\Sigma} \frac{1}{H_1^2}\langle \nabla H_1, X\rangle d\mu\le 0.
\end{equation}
To see why this implies $\Sigma$ is round, one can substitute $f=\frac{1}{H_1}$ into \eqref{rn mink} to obtain
\begin{align}\label{intro rigid}
\int_\Sigma \frac{1}{H_1}d\mu =\int_{\Sigma}\langle X, \nu\rangle d\mu +\frac{1}{n-1}\int_{\Sigma}\frac{1 }{H_1^2} \langle \nabla H_1, X\rangle d\mu \le \int_{\Sigma}\langle X, \nu\rangle d\mu.
\end{align}
The rigidity case of \eqref{hk intro} then implies that $\Sigma$ is round.

Moreover, we see from \eqref{intro rigid} that if \eqref{cond rn} is false, i.e. $\int_{\Sigma} \frac{1}{H_1^2}\left\langle\nabla H_1, X\right\rangle d \mu > 0$, then \eqref{hk intro} is strict. Therefore $\Sigma$ is non-round. So we have the following characterization of a round sphere.
\begin{theorem}
The condition
$$
\int_{\Sigma} \frac{1}{H_1^2}\left\langle\nabla H_1, X\right\rangle d \mu \le 0
$$
is a necessary and sufficient condition for a closed embedded mean-convex hypersurface $\Sigma$ in $\mathbb R^n$ to be round.
\end{theorem}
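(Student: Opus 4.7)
The plan is to combine the weighted Minkowski formula \eqref{rn mink} with the Heintze-Karcher inequality \eqref{hk intro}, exactly along the heuristic computation \eqref{intro rigid} already sketched in the excerpt; this single identity will give both directions of the equivalence.

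For the sufficiency direction, I would substitute the weight $f = 1/H_1$ into \eqref{rn mink}. Mean-convexity guarantees $H_1 > 0$, so $1/H_1$ is smooth and positive on $\Sigma$, and $\nabla(1/H_1) = -H_1^{-2}\nabla H_1$. The Minkowski formula then produces
\begin{equation*}
\int_\Sigma \frac{1}{H_1}\,d\mu \;=\; \int_\Sigma \langle X,\nu\rangle\,d\mu \;+\; \frac{1}{n-1}\int_\Sigma \frac{1}{H_1^{2}}\langle \nabla H_1, X\rangle\,d\mu.
\end{equation*}
Under the hypothesis $\int_\Sigma H_1^{-2}\langle \nabla H_1, X\rangle\,d\mu \le 0$, this gives $\int_\Sigma H_1^{-1}\,d\mu \le \int_\Sigma \langle X,\nu\rangle\,d\mu$. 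On the other hand, \eqref{hk intro} provides the reverse inequality, so equality must hold throughout, in particular in Heintze-Karcher. The rigidity statement of \eqref{hk intro} then forces $\Sigma$ to be a round sphere.

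For the necessity direction the argument is immediate: on a round sphere $H_1$ is constant, so $\nabla H_1 \equiv 0$ and the integral vanishes, trivially satisfying $\le 0$.

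I do not anticipate any real obstacle; the only care needed is the bookkeeping of the sign in $\nabla(1/H_1)$ and the remark that mean-convexity legitimizes the choice of $1/H_1$ as a weight. All the substantive content is already encoded in the two input theorems, namely the weighted Minkowski formula \eqref{rn mink} and the Heintze-Karcher inequality \eqref{hk intro} together with its equality case.
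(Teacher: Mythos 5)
Your proposal is correct, and the sufficiency direction is exactly the paper's argument: substitute $f=1/H_1$ into \eqref{rn mink}, use mean-convexity to justify the weight, and invoke the rigidity case of \eqref{hk intro}. The only place you diverge from the paper is the necessity direction: you argue directly that a round sphere has constant $H_1$, so $\nabla H_1\equiv 0$ and the integral vanishes, whereas the paper argues by contrapositive, observing from \eqref{intro rigid} that if the integral is strictly positive then the Heintze--Karcher inequality \eqref{hk intro} is strict, so $\Sigma$ cannot be round. Both arguments are valid here, and yours is the more elementary; the paper's contrapositive formulation is the one that carries over to the spacetime setting of Theorem \ref{alex}, where one cannot simply read off that the corresponding integrand vanishes when $\Sigma$ lies in a shear-free null hypersurface (since $|\vec H|$ need not be constant there), so the strictness of the Heintze--Karcher inequality is the only available handle on that direction.
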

\begin{remark}
We notice that there is an integral condition in a recent paper \cite[Remark 6.1]{li2022locally}, which is similar to \eqref{cond intro} or \eqref{cond rn}, that characterizes closed totally umbilical (hence round) spacelike hypersurfaces in the de Sitter space. However, this characterization is based on a conjectured Heintze-Karcher type rigidity in the de Sitter space, and the submanifold is assumed to be a hypersurface rather than a codimension-two one.
\end{remark}

At this point, let us explain the rationale behind assuming an integral inequality condition instead of a pointwise condition. This is due to our use of the rigidity case of the Heintze-Karcher inequality (\eqref{hk spacetime} or \eqref{hk intro}), which is an integral inequality on $\Sigma$. Hence, we should expect a condition that is less restrictive than a pointwise constant condition. In this sense, our integral inequality condition is natural.

This line of reasoning can be readily applied to the Lorentzian case under the assumptions in Theorem \ref{thm intro1}, thereby establishing our main result. We remark that the integral condition \eqref{cond rn} is less restrictive than the pointwise condition $\langle\nabla H_1, X\rangle \le 0$ stated in \cite[Theorem 1]{gao2021characterizations}. With the same techniques, we are able to generalize both Theorem 1 and Theorem 2 in the warped product manifold settings considered in \cite{gao2021characterizations} by substituting the pointwise condition on the higher order mean curvatures with an integral condition. However, since the main objective of this paper is to study spacetime Alexandrov theorems, we will not delve into these extensions in detail.

We will also prove similar Alexandrov-type results for mixed higher order mean curvatures in spacetime with constant curvature.
Let us first briefly explain the relevant notations. Consider a null frame $L$ and $\underline L$ of $\Sigma$ that satisfy the condition $\langle L, \underline{L}\rangle=-2$. Corresponding to this null frame, we can define the connection one-form $\zeta=\zeta_L$ and two null second fundamental forms $\chi$ and $\underline{\chi}$. Remarkably, the two-form $d\zeta_L$ is independent of the choice of $L$ and $\underline{L}$ (Lemma \ref{lem: d zeta}). Consequently, a natural cohomology condition on $\zeta$ ensures the existence of a canonical null frame $L$, $\underline L$, with respect to which $\Sigma$ is torsion-free (i.e. $\zeta_L=0$). Parallel to the theory of hypersurfaces in the Euclidean space, we can define the \textit{mixed} higher order mean curvature $P_{r, s}(\chi, \underline{\chi})$, as well as the \textit{mixed} Newton tensors $T_{r, s}(\chi,\underline{\chi})$ and $\underline{T}_{r, s}(\chi,\underline{\chi})$, with respect to $\chi$ and $\underline{\chi}$ for a codimension-two spacelike submanifold $\Sigma$ in spacetime. For precise definitions, please refer to Section \ref{sec higher}.

We will show that the Alexandrov-type result can be extended for mixed higher order mean curvatures in a spacetime with constant curvature. Besides imposing an integral inequality condition instead of a pointwise constant condition, we also relax the zero torsion condition in \cite[Theorem 5.1]{wang2017Minkowski} to a cohomology condition. We consider this cohomology condition to be a more natural choice since it remains invariant under any change of the null frame. We will prove the following theorem.
\begin{theorem}[Theorem \ref{thm higher order alex}]\label{intro thm 1.5}
Let $\Sigma$ be a past incoming null embedded (see Definition \ref{def null embed}), closed spacelike codimension-two submanifold in an $(n+1)$ dimensional spacetime of constant curvature
such that $d \zeta=0$ and $[\zeta]=0 \in H_{d R}^1(\Sigma)$.
Then there exists a null frame $L$ and $\underline{L}$ (given by Proposition \ref{prop torsion}) with respect to which $\Sigma$ is torsion free. Moreover, assume that the second fundamental form w.r.t. $L$ and $\underline L$ satisfies $\chi \in \Gamma_{r}$.
If
\begin{align*}
\int_{\Sigma} \frac{1}{P_{r, 0}^2}Q\left(T_{r, 0}(\nabla P_{r, 0}), L\right) d \mu\le 0,
\end{align*}
then $\Sigma$ lies in an outgoing shear-free null hypersurface.
\end{theorem}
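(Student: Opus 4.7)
The plan is to mimic the strategy used for Theorem \ref{thm intro1} in the mean curvature setting, now at the level of the $r$-th mixed mean curvature $P_{r,0}$ with respect to a canonically chosen null frame. The cohomological hypotheses $d\zeta=0$ and $[\zeta]=0\in H^1_{dR}(\Sigma)$ give $\zeta=d\phi$ for some $\phi\in C^\infty(\Sigma)$. Then a boost $L\mapsto e^{\phi}L$, $\underline L\mapsto e^{-\phi}\underline L$ (this is exactly the content of Proposition \ref{prop torsion}) kills the connection one-form, yielding a torsion-free null frame. Because the statement concerns $P_{r,0}$ (depending only on $\chi$, not on $\underline{\chi}$), once this null frame is fixed we may treat $\chi$ as a symmetric $2$-tensor on $\Sigma$ and apply Newton inequalities such as $P_{r-1,0}P_{r+1,0}\le P_{r,0}^2$ and $P_{r,0}\le P_{r-1,0}^{r/(r-1)}$ under the assumption $\chi\in\Gamma_r$, in direct analogy with the Euclidean theory.

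The second step is a weighted higher-order Minkowski formula, the Lorentzian analogue (for $P_{r,0}$) of equation \eqref{rn mink} and the $Q$-version of Theorem \ref{thm intro2}. The formula I expect takes the schematic form
\begin{equation*}
\int_\Sigma f\, P_{r-1,0}\,\langle\xi,L\rangle\, d\mu \;+\; c_{n,r}\int_\Sigma f\, P_{r,0}\,(\text{umbilical term})\, d\mu \;+\; \int_\Sigma Q\bigl(T_{r,0}(\nabla f),L\bigr)\, d\mu \;=\;0,
\end{equation*}
valid for any weight $f\in C^\infty(\Sigma)$ in a spacetime of constant curvature (where the curvature obstruction vanishes and the conformal Killing--Yano two-form $Q=r\,dr\wedge dt$ is available). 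Substituting $f=1/P_{r,0}$ turns the weighted term into $-\int_\Sigma\frac{1}{P_{r,0}^2}Q(T_{r,0}(\nabla P_{r,0}),L)\,d\mu$, which by the hypothesis of the theorem has a definite sign.

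The third step couples this identity with a Heintze--Karcher type inequality for $P_{r,0}$ in spacetimes of constant curvature, in the spirit of Theorem \ref{thm intro3} but at order $r$. For past incoming null embedded $\Sigma$, this inequality bounds $\int_\Sigma\frac{P_{r-1,0}}{P_{r,0}}\langle\xi,L\rangle\,d\mu$ from above by $\int_\Sigma\langle\xi,L\rangle\,d\mu$ (again up to the structural $Q$-term), with equality precisely when $\Sigma$ lies in an outgoing shear-free null hypersurface. Combining the weighted Minkowski identity applied at $f=1/P_{r,0}$ with the Newton--MacLaurin inequality gives one direction of an integral inequality; the hypothesis that the $Q$-integral is nonpositive produces the opposite inequality. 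The two together collapse to equality in Heintze--Karcher, and the rigidity clause forces $\Sigma$ to lie on an outgoing shear-free null hypersurface.

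The main obstacle will be bookkeeping rather than a new geometric idea. Three points require real care: (i) verifying that the higher-order weighted Minkowski formula transforms correctly under the boost $L\mapsto e^{\phi}L$ used to achieve the torsion-free frame, so that $P_{r,0}$, $T_{r,0}$, and $Q(T_{r,0}(\nabla f),L)$ all behave consistently; (ii) ensuring the signs and conventions match between the future/past incoming null conditions and the choice of outgoing shear-free conclusion, since the incoming/outgoing duality enters through the reflection $\vec J$ and the sign in the Heintze--Karcher inequality; and (iii) justifying the application of the Newton--MacLaurin inequality at the integral level, which needs $\chi\in\Gamma_r$ uniformly on $\Sigma$ so that $P_{r-1,0}/P_{r,0}$ and $T_{r,0}$ are well-defined and the inequality $\int P_{r-1,0}/P_{r,0}\ge \int 1/P_{r-1,0}^{1/(r-1)}$-type estimates can be invoked without boundary issues. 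Once these are handled, the proof is a direct three-line combination of the weighted Minkowski identity, the higher-order Heintze--Karcher inequality, and the hypothesis on $Q(T_{r,0}(\nabla P_{r,0}),L)$.
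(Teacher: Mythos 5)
Your proposal follows essentially the same route as the paper: produce the torsion-free frame from Proposition \ref{prop torsion}, substitute $f=1/P_{r,0}$ into the weighted higher-order Minkowski formula so the hypothesis controls the $Q(T_{r,0}(\nabla f),L)$ term, descend from $H_{r-1,0}/H_{r,0}$ to $1/H_{1,0}=\frac{n-1}{\mathrm{tr}\,\chi}$ by iterating the Newton--Maclaurin inequality (using $\chi\in\Gamma_r$ and $\langle L,\partial/\partial t\rangle<0$), and then force equality in the spacetime Heintze--Karcher inequality, whose rigidity case gives the conclusion. The only correction is that no ``order-$r$'' Heintze--Karcher inequality is needed or available: the paper uses only the first-order inequality \eqref{3.17} (with $P_{1,0}=-\langle\vec H,L\rangle$), the gap between orders being bridged entirely by the Newton--Maclaurin chain you already describe.
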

There is a result corresponding to the future incoming case as well, see Theorem \ref{thm higher order alex'}.

The proof of weighted Minkowski formulas for higher order mean curvatures relies heavily on the divergence-free property of Newton tensors, which is valid in Lorentzian space forms as discussed in Section \ref{sec higher}. However, this property does not hold in more general spacetimes such as the Schwarzschild spacetime. In such cases, the divergence of the Newton tensors involves the ambient curvature, making it more difficult to analyze. Nonetheless, Brendle-Eichmair \cite{BE2013} discovered an interesting observation that, under certain natural conditions on the submanifold $\Sigma$ in the Riemannian Schwarzschild manifold, the divergence term in the Minkowski formula has a positive sign. This observation enables them to obtain Minkowski-type inequalities and prove Alexandrov theorems in this setting. A similar observation was made in the Schwarzschild spacetime in \cite[Lemma 6.1]{wang2017Minkowski}. By combining this observation with the weighted version of Minkowski inequalities, which we derive in Section \ref{sec Schwarzschild}, we can obtain integral Alexandrov-type theorems in the Schwarzschild spacetime (Theorem \ref{thm schwarzschil alex}).

The remainder of this paper is structured as follows. In Section \ref{sec mink}, we derive weighted spacetime Minkowski formulas for a closed immersed oriented spacelike codimension-two submanifold in a Lorentzian manifold $V$ possessing a conformal Killing-Yano two-form. We formulate these formulas in a manner that will be useful in later sections (see Theorem \ref{mink2}). In Section \ref{sec integral Alexandrov type theorem}, we provide a proof for Theorem \ref{thm intro1}, which presents a necessary and sufficient condition on a certain integral involving mean curvature. This condition ensures that a closed embedded spacelike codimension-two submanifold lies in an incoming shear-free null hypersurface. In Section \ref{sec higher}, we introduce the mixed higher order mean curvatures $P_{r, s}$ of a codimension-two spacelike submanifold in an $(n+1)$-dimensional spacetime $V$, along with corresponding weighted Minkowski formulas (Theorem \ref{thm higher mink}) on $\Sigma$. We then present a proof for Theorem \ref{intro thm 1.5}, which provides an integral Alexandrov type theorem for higher order mean curvatures under a natural cohomology condition on $\Sigma$ that does not rely on a null frame being chosen a priori. In Section \ref{sec Schwarzschild}, we prove some generalized Alexandrov theorems in the Schwarzschild spacetime, subject to natural assumptions on $\Sigma$. Although the ambient curvature is not constant and the mixed Newton tensors are no longer divergence-free in the Schwarzschild case, we can still derive a weighted Minkowski-type inequality under certain conditions on $\Sigma$. This observation is similar to the one made in \cite[Proposition 8]{BE2013}, which allowed for a Minkowski-type inequality to be obtained in the Riemannian Schwarzschild manifold.

\begin{ack}
Kwok-Kun Kwong was supported by grant FL150100126 of the Australian Research Council. Xianfeng Wang was supported by NSFC Grant No. 11971244 and ``the Fundamental Research Funds for the Central Universities" Grant No. 050-63233068. The first author would like to thank Professors Mu-Tao Wang and Ye-Kai Wang for their valuable discussions.
\end{ack}

\section{Weighted spacetime Minkowski formulas}\label{sec mink}
Let $F: \Sigma^{n-1} \rightarrow\left(V^{n+1}, \langle \cdot, \cdot \rangle\right)$, be a closed immersed oriented spacelike codimension-two submanifold in an oriented $(n+1)$-dimensional Lorentzian manifold $\left(V^{n+1}, \langle \cdot, \cdot \rangle\right)$. Let $\vec{H}$ denote the mean curvature vector of $\Sigma$.
We assume the normal bundle is also orientable and let $\{e_a\}_{a=1}^{n-1}$ be a local orthonormal frame of $\Sigma$, with $\{\omega^a\}_{a=1}^{n-1}$ being its dual coframe.
Let $D$ and $\nabla$ denote the Levi-Civita connection of $V$ and $\Sigma$, respectively. In this paper, $V$ and $\Sigma$ are assumed to be connected.

A two-form $Q$ on $V$ is said to be a conformal Killing-Yano two-form \cite[Definition 1]{jezierski2006conformal} if
$$
\left(D_{X} Q\right)(Y, Z)+\left(D_{Y} Q\right)(X, Z)
=\frac{2}{n}\left(\langle X, Y\rangle\langle\xi, Z\rangle-\frac{1}{2}\langle X, Z\rangle\langle\xi, Y\rangle-\frac{1}{2}\langle Y, Z\rangle\langle\xi, X\rangle\right)
$$
for any tangent vectors $X, Y$ and $Z$, where $\xi=\mathrm{div}_{V} Q$.

Conformal Killing-Yano two-forms are often referred to as ``hidden symmetries'' in physics, as they do not necessarily correspond to any continuous symmetry of the ambient space, unlike conformal Killing vector fields.
\begin{remark}
If $(M^n, g)$ is a Riemannian manifold which possesses a conformal Killing 1-form $ Q$, i.e., $\nabla^M_X Q(Y)+\nabla^M_Y Q(X)= \frac{2}{ n } \mathrm{div}_M (Q) g(X, Y)$ for any tangent vectors $X$ and $Y$, then it is not hard to see that on $(M\times \mathbb R, -dt^2+g)$, $Q\wedge dt$ is a conformal Killing-Yano two-form.
\end{remark}

The following is the weighted version of the spacetime Minkowski formula \cite[Theorem 2.2]{wang2017Minkowski}, see also \cite{kwong2016extension, kwong2018weighted}. We notice that this is indeed equivalent to \cite[Theorem 2.2]{wang2017Minkowski} by the change of null vector $\underline L\to f \underline L$. However, we choose to allow an arbitrary weight $f$ in the weighted version of the spacetime Minkowski formula, as we plan to rewrite this formula in a more suitable form for our later purposes.

\begin{proposition}
Let $\Sigma$ be a closed immersed oriented spacelike codimension-two submanifold in an $(n+1)$-dimensional Riemannian or Lorentzian manifold $V$ that possesses a conformal Killing-Yano two-form $Q$. For any null normal vector field $\underline{L}$ of $\Sigma$ and a smooth function $f$ defined on $\Sigma$, we have
\begin{align}\label{2.2}
\frac{n-1}{n} \int_{\Sigma}f\langle\xi, \underline{L}\rangle d \mu+\int_{\Sigma}f Q(\vec{H}, \underline{L}) d \mu+\int_{\Sigma} f Q\left(e_a, \left(D^{a} \underline{L}\right)^{\perp}\right) d \mu+\int_\Sigma Q\left(\nabla f, \underline{L}\right)d\mu=0
\end{align}
where $\xi=\mathrm{div}_{V} Q$.
\end{proposition}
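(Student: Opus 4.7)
The plan is to define a tangential one-form on $\Sigma$ by contracting $Q$ with the null vector field $\underline{L}$, compute its divergence using the conformal Killing-Yano equation, and apply the divergence theorem. Specifically, I would introduce the one-form $\alpha$ on $\Sigma$ given by $\alpha(X) := Q(X, \underline{L})$ for $X \in T\Sigma$, and compute the divergence of the weighted form $f\alpha$. By the Leibniz rule, $\mathrm{div}_\Sigma(f\alpha) = f\,\mathrm{div}_\Sigma \alpha + \alpha(\nabla f) = f\,\mathrm{div}_\Sigma \alpha + Q(\nabla f, \underline{L})$, so it suffices to identify the three main terms from $\mathrm{div}_\Sigma \alpha$.

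To compute $\mathrm{div}_\Sigma \alpha = \sum_a (\nabla_{e_a}\alpha)(e_a)$, I would expand $e_a(Q(e_a,\underline{L}))$ using the ambient connection $D$, which produces three pieces: $(D_{e_a}Q)(e_a,\underline{L})$, a second-fundamental-form correction $Q(II(e_a,e_a),\underline{L})$, and $Q(e_a, D_{e_a}\underline{L})$. Summing in $a$ turns the second piece into $Q(\vec H,\underline{L})$. For the first piece, the CKY identity applied with $X=Y=e_a$ and $Z=\underline{L}$, together with the orthogonality $\langle e_a,\underline{L}\rangle=0$, collapses to $(D_{e_a}Q)(e_a,\underline{L}) = \tfrac{1}{n}\langle\xi,\underline{L}\rangle$, yielding $\tfrac{n-1}{n}\langle\xi,\underline{L}\rangle$ after summation.

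For the third piece $\sum_a Q(e_a, D_{e_a}\underline{L})$, I would split $D_{e_a}\underline{L}$ into its tangential and normal components. The tangential component satisfies $\langle D_{e_a}\underline{L}, e_b\rangle = -\langle\underline{L}, II(e_a,e_b)\rangle$, which is symmetric in $a,b$; pairing this symmetric object with the antisymmetric $Q(e_a,e_b)$ makes the tangential contribution vanish. Only $\sum_a Q(e_a,(D_{e_a}\underline{L})^\perp)$ survives. Combining everything,
\begin{equation*}
\mathrm{div}_\Sigma(f\alpha) = f\left[\tfrac{n-1}{n}\langle\xi,\underline{L}\rangle + Q(\vec H,\underline{L}) + Q(e_a,(D^a\underline{L})^\perp)\right] + Q(\nabla f,\underline{L}),
\end{equation*}
and integrating over the closed submanifold $\Sigma$ kills the left-hand side, giving \eqref{2.2}.

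The only genuinely delicate step is the CKY contraction, which requires using the defining identity in symmetric form with a careful choice of arguments and invoking $\langle e_a,\underline{L}\rangle=0$ to eliminate the otherwise-problematic mixed term. Everything else is a standard Stokes-type computation, and the weight $f$ enters in a transparent way through the Leibniz rule, accounting for the final $Q(\nabla f,\underline{L})$ term. The formula is valid whether $V$ is Riemannian or Lorentzian since no signature-specific hypothesis is used anywhere in the computation.
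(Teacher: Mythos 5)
Your proposal is correct and follows essentially the same route as the paper: both consider the one-form $X \mapsto Q(X,\underline{L})$ on $\Sigma$, compute the divergence of its product with $f$ via the Leibniz rule, and integrate. The only difference is that the paper simply cites the divergence identity from the proof of Theorem 2.2 in \cite{wang2017Minkowski}, whereas you derive it directly (correctly handling the CKY contraction with $\langle e_a,\underline{L}\rangle=0$ and the cancellation of the tangential part of $D_{e_a}\underline{L}$ against the antisymmetry of $Q$).
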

\begin{proof}
Consider the one-form $\mathcal Q =Q\left(e_a, \underline{L}\right) \omega^{a}$
on $\Sigma$. By the proof of \cite[Theorem 2.2]{wang2017Minkowski},
\begin{align*}
\mathrm{div}_{\Sigma} \mathcal Q
& =\frac{n-1}{n}\langle\xi, \underline{L}\rangle+Q(\vec{H}, \underline{L})+Q\left(e_a, \left(D^{a} \underline{L}\right)^{\perp}\right)
\end{align*}
and therefore
\begin{align*}
\mathrm{div}_{\Sigma} \left(f\mathcal Q\right)
& =\frac{n-1}{n}f\langle\xi, \underline{L}\rangle+fQ(\vec{H}, \underline{L})+fQ\left(e_a, \left(D^{a} \underline{L}\right)^{\perp}\right)+Q\left(\nabla f, \underline{L}\right).
\end{align*}
The assertion follows by integrating this over $\Sigma$.
\end{proof}

In the case of the Schwarzschild spacetime, we take $Q=r d r \wedge d t$, then $\xi=-n \frac{\partial}{\partial t}$.
In particular, \cite[Theorem A]{wang2017Minkowski} can be extended to
\begin{theorem}
Consider the two-form $Q=r d r \wedge d t$ on the Schwarzschild spacetime. For a closed oriented spacelike codimension-two submanifold $\Sigma$ in the Schwarzschild spacetime and a null normal vector field $\underline{L}$ along $\Sigma$ and a smooth function $f$ defined on $\Sigma$, we have
\begin{align*}
-(n-1) \int_{\Sigma}f\left\langle\frac{\partial}{\partial t}, \underline{L}\right\rangle d \mu+\int_{\Sigma} fQ(\vec{H}, \underline{L}) d \mu+\sum_{a=1}^{n-1} \int_{\Sigma}f Q\left(e_{a}, \left(D_{e_{a}} \underline{L}\right)^{\perp}\right) d \mu+\int_{\Sigma} Q\left(\nabla f, \underline{L}\right) d \mu=0.
\end{align*}
\end{theorem}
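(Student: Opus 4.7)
The plan is to recognize this theorem as the direct specialization of the general weighted Minkowski formula \eqref{2.2} to the Schwarzschild spacetime with $Q = r\,dr\wedge dt$. The previous proposition does all the heavy lifting, so the only task is to compute $\xi = \operatorname{div}_V Q$ and verify the identity $\xi = -n\,\partial/\partial t$ that the paper records just before the theorem statement. Once this is established, substitution into the first term of \eqref{2.2} gives $\tfrac{n-1}{n}\cdot(-n) = -(n-1)$ as the coefficient of $\int_\Sigma f\langle\partial/\partial t,\underline{L}\rangle\,d\mu$, while the other three terms carry over verbatim.

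To compute $\xi$, I would work in Schwarzschild coordinates with metric
\[
g = -V(r)\,dt^2 + V(r)^{-1}\,dr^2 + r^2\,g_{S^{n-1}}, \qquad V(r) = 1 - \tfrac{2m}{r^{n-2}},
\]
and write $Q = r\,dr\wedge dt = \tfrac12\,d(r^2)\wedge dt$. Using that $\partial/\partial t$ is Killing for this metric, the components of $\nabla Q$ in the $t$-direction simplify substantially, and the divergence $\xi_k = \nabla^i Q_{ik}$ reduces to contractions in the $r$ and sphere directions. The $r$-derivative of the coefficient $r$ contributes one factor towards $-\partial/\partial t$, and the contraction over the $(n-1)$ sphere indices contributes $n-1$ more (through the trace of the Hessian of $r$ on coordinate spheres, which is proportional to the mean curvature $(n-1)/r$ of the spheres in the warped product), producing the total $\xi = -n\,\partial/\partial t$. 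Equivalently, one may use $\operatorname{div} Q = -{\star}\,d{\star}\, Q$ in Lorentzian signature, where the computation of $d{\star}Q$ on the warped product is immediate. In either route one also confirms in passing that $Q$ is indeed a conformal Killing–Yano two-form on Schwarzschild, which is already implicit in the literature.

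The only genuine work is this single divergence computation; it is routine but constitutes the sole nontrivial step of the proof. The main (and minor) obstacle is book-keeping the contribution of the sphere factor with the warping factor $r^2$ correctly, so that the $n-1$ sphere directions and the single radial direction combine to give exactly $-n$ rather than some other constant. Once this is verified, the theorem follows by direct substitution of $\xi = -n\,\partial/\partial t$ into \eqref{2.2}.
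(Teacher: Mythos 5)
Your proposal matches the paper's own (essentially one-line) derivation: the theorem is obtained by substituting $\xi=\mathrm{div}_V Q=-n\,\frac{\partial}{\partial t}$ for $Q=r\,dr\wedge dt$ into the general weighted formula \eqref{2.2}, so that $\frac{n-1}{n}\langle\xi,\underline{L}\rangle$ becomes $-(n-1)\langle\frac{\partial}{\partial t},\underline{L}\rangle$ while the remaining terms are unchanged. Your sketch of the divergence computation (one contribution from the radial direction and $n-1$ from the sphere directions, giving $-n$) is correct and merely fills in a step the paper takes as known.
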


Recall that $\vec{H}$ denotes the mean curvature vector of $\Sigma$.
Let $\left\{e_{n}, e_{n+1}\right\}$ be an oriented orthonormal frame of the normal bundle such that $e_{n}$ is spacelike and $e_{n+1}$ is future timelike. Define the normal vector field $\vec{J}$ by reflecting $\vec{H}$ along the incoming light cone:
$$ \vec{J}=\left\langle\vec{H}, e_{n+1}\right\rangle e_{n}-\left\langle\vec{H}, e_{n}\right\rangle e_{n+1}. $$
Then $\vec{J}$ satisfies $\langle\vec{J}, \vec{J}\rangle=-\langle\vec{H}, \vec{H}\rangle$ and $\langle\vec{J}, \vec{H}\rangle=0. $ In fact, $\vec{J}$ is uniquely characterized by these properties up to a sign.

Now, suppose the mean curvature vector is spacelike everywhere. Physically, $\Sigma$ is called an untrapped submanifold. This is because $\vec{H}$ is a causal vector when $\Sigma$ is trapped, with negative null expansions ($-\langle \vec H, L\rangle<0$ and $-\langle \vec H, \underline{L} \rangle<0$) for a future outgoing (resp. incoming) null normal $L$ (resp. $\underline{L}$), or when $\Sigma$ is marginally trapped, having both null expansions equal to zero.
We take $e_{n}^{\vec{H}}=-\frac{\vec{H}}{|\vec{H}|}$ and $e_{n+1}^{\vec{H}}=\frac{\vec{J}}{|\vec{H}|}$ and write $\alpha_{\vec{H}}$ for the connection one-form with respect to this mean curvature vector \cite{wang2017Minkowski}:
$$ \alpha_{\vec{H}}(V)=\left\langle D_{V} e_{n}^{\vec{H}}, e_{n+1}^{\vec{H}}\right\rangle \textrm{ for $V \in T \Sigma$. }$$

For our purpose, we rewrite the weighted Minkowski formula in the following form.
\begin{theorem}\label{mink2}
Let $\Sigma$ be a closed immersed oriented spacelike codimension-two submanifold in an $(n+1)$-dimensional Riemannian or Lorentzian manifold $V$ that possesses a conformal Killing-Yano two-form $Q$. Assume the mean curvature vector $\vec H$ of $\Sigma$ is spacelike. Then for any smooth function $f$ defined on $\Sigma$, we have
\begin{equation}\begin{split} \label{2.2'}
&\frac{n-1}{n} \int_{\Sigma} \frac{f}{|\vec{H}|}\left\langle\xi, \vec{H}+\vec{J}\right\rangle d \mu+\int_{\Sigma} \frac{f}{|\vec{H}|} Q\left(\vec{H}, \vec{J}\right) d \mu+\int_{\Sigma} \frac{1}{|\vec{H}|} Q\left(\nabla f + f \sum_{a=1}^{n-1}\alpha_{\vec{H}}\left(e_{a}\right) e_{a}, \vec{H}+\vec{J}\right) d \mu\\
&=0
\end{split}\end{equation}
and
\begin{equation}\begin{split}\label{2.2''}
&\frac{n-1}{n} \int_{\Sigma} \frac{f}{|\vec{H}|}\left\langle\xi, -\vec{H}+\vec{J}\right\rangle d \mu+\int_{\Sigma} \frac{f}{|\vec{H}|} Q\left(\vec{H}, \vec{J}\right) d \mu+\int_{\Sigma} \frac{1}{|\vec{H}|} Q\left(\nabla f-f \sum_{a=1}^{n-1} \alpha_{\vec{H}}\left(e_{a}\right) e_{a}, -\vec{H}+\vec{J}\right) d \mu\\
&=0.
\end{split}\end{equation}
\end{theorem}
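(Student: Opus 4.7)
The plan is to deduce the two formulas \eqref{2.2'} and \eqref{2.2''} from the earlier identity \eqref{2.2} by substituting in the two canonical null normal vector fields built from the mean curvature gauge $\{e_n^{\vec H}, e_{n+1}^{\vec H}\}$, namely
\[
\underline L_{\pm}:=e_{n+1}^{\vec H}\mp e_n^{\vec H},
\]
so that $|\vec H|\,\underline L_+ =\vec H+\vec J$ and $|\vec H|\,\underline L_- =-\vec H+\vec J$. That both of these are null vectors is immediate from the relations $\langle\vec J,\vec J\rangle=-|\vec H|^{2}$ and $\langle\vec H,\vec J\rangle=0$ recorded just before the statement, together with $|\vec H|>0$ (which is why we need $\vec H$ spacelike).

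The main computational ingredient is to express $(D_{e_a}\underline L_{\pm})^{\perp}$ in terms of the connection one-form $\alpha_{\vec H}$. Since $e_n^{\vec H}$ is spacelike and $e_{n+1}^{\vec H}$ is future timelike, differentiating the Lorentzian orthonormality relations $\langle e_n^{\vec H},e_n^{\vec H}\rangle=1$, $\langle e_{n+1}^{\vec H},e_{n+1}^{\vec H}\rangle=-1$, $\langle e_n^{\vec H},e_{n+1}^{\vec H}\rangle=0$ yields
\[
(D_V e_n^{\vec H})^{\perp}=-\alpha_{\vec H}(V)\,e_{n+1}^{\vec H},\qquad (D_V e_{n+1}^{\vec H})^{\perp}=-\alpha_{\vec H}(V)\,e_n^{\vec H},
\]
and consequently the clean formula
\[
(D_{e_a}\underline L_{\pm})^{\perp}=\pm\alpha_{\vec H}(e_a)\,\underline L_{\pm}.
\]
Summing against $Q(e_a,\cdot)$ therefore collapses the ``connection term'' in \eqref{2.2} to $\pm Q\bigl(\sum_{a}\alpha_{\vec H}(e_a)\,e_a,\,\underline L_{\pm}\bigr)$, which is precisely the piece that, after clearing the factor $|\vec H|$, will pair up with the $\nabla f$ term.

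With this identity in hand, I would substitute $\underline L=\underline L_{\pm}$ together with the weight $f$ into \eqref{2.2}. Each of the four terms there lines up against one term of \eqref{2.2'} or \eqref{2.2''}: the $\langle\xi,\underline L_{\pm}\rangle$ term contributes $\frac{n-1}{n}\frac{f}{|\vec H|}\langle\xi,\pm\vec H+\vec J\rangle$; the $Q(\vec H,\underline L_{\pm})$ term reduces to $\frac{f}{|\vec H|}Q(\vec H,\vec J)$ by antisymmetry of $Q$ and $Q(\vec H,\vec H)=0$; and the connection term combines with the $Q(\nabla f,\underline L_{\pm})$ term into the single expression $\frac{1}{|\vec H|}Q\bigl(\nabla f\pm f\sum_{a}\alpha_{\vec H}(e_a)e_a,\,\pm\vec H+\vec J\bigr)$. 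No real conceptual obstacle arises here; the only point requiring care is the systematic tracking of signs coming from the timelike factor $\langle e_{n+1}^{\vec H},e_{n+1}^{\vec H}\rangle=-1$, and it is precisely this sign bookkeeping that accounts for the difference between \eqref{2.2'} and \eqref{2.2''}.
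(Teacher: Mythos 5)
Your proposal is correct and follows essentially the same route as the paper: both substitute the null normals $\frac{1}{|\vec H|}(\pm\vec H+\vec J)$ (your $\underline L_{\pm}$) into the weighted identity \eqref{2.2} and reduce the connection term via $(D_{e_a}\underline L_{\pm})^{\perp}=\pm\alpha_{\vec H}(e_a)\underline L_{\pm}$. The only cosmetic difference is that you derive this last identity directly from the orthonormality of $\{e_n^{\vec H},e_{n+1}^{\vec H}\}$, whereas the paper cites the computation from \cite[Proposition 3.4]{wang2017Minkowski}.
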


\begin{proof}
Define the two null vectors $\underline{L}=\frac{1}{|\vec{H}|}(\vec{H}+\vec{J}) $ and $L= \frac{1}{|\vec{H}|}(-\vec{H}+\vec{J})$. Then $Q(\vec{H}, \underline{L})=\frac{1}{|\vec{H}|} Q(\vec{H}, \vec{J})$ and
it follows from the computation in \cite[Proposition 3.4]{wang2017Minkowski} that
\begin{equation*}\begin{split}
\left(D_{e_a} \underline{L}\right)^{\perp}= - \frac{1}{2}\left\langle D_{e_a} \underline{L}, L\right\rangle \underline{L}= \frac{\alpha_{\vec H }(e_a)} {|\vec{H}|} \left(\vec{H} + \vec{J} \right)
\end{split}\end{equation*}
and
\begin{equation*}\begin{split}
\left(D_{e_a} L\right)^{\perp}=-\frac{1}{2}\left\langle D_{e_a} L, \underline L\right\rangle L=-\frac{\alpha_{\vec{H}}\left(e_{a}\right)}{|\vec{H}|}\left(-\vec{H}+\vec{J}\right).
\end{split}
\end{equation*}
The formula \eqref{2.2} then becomes
\begin{align*}
&\frac{n-1}{n} \int_{\Sigma}\frac{f}{|\vec{H}|}\left\langle\xi, \vec{H}+\vec{J} \right\rangle d \mu+\int_{\Sigma}\frac{f}{|\vec{H}|} Q\left(\vec{H}, \vec J\right) d \mu
+\int_{\Sigma} \frac{1}{|\vec{H}|}Q\left(\nabla f + f\sum_{a=1}^{n-1}\alpha_{\vec{H}}\left(e_{a}\right) e_{a}, \vec H+\vec J\right) d \mu\\
&=0.
\end{align*}
The formula \eqref{2.2''} can be similarly obtained.
\end{proof}
\paragraph{\textbf{Assumption A}}\label{assA}
We assume $V$ is an $(n+1)$-dimensional spacetime that satisfies the null convergence condition, i.e.,
$\mathrm{Ric}(L, L) \ge 0 $ for any null vector $L $,
and the metric $\bar{g}$ on $V= \mathbb R \times M$ is of the form
\begin{equation*}
\bar{g}=-f^{2}(r) d t^{2}+\frac{1}{f^{2}(r)} d r^{2}+r^{2} g_{N}
\end{equation*}
where $M=\left[r_{1}, r_{2}\right) \times N$ equipped with metric
\begin{equation}\label{3.10}
g=\frac{1}{f^{2}(r)} d r^{2}+r^{2} g_{N}
\end{equation}
and $\left(N, g_{N}\right)$ is a compact $n$-dimensional Riemannian manifold. We consider two cases:\\
(i) $f:[0, \infty) \rightarrow \mathbb R$ with $f(0)=1, f^{\prime}(0)=0$, and $f(r)>0$ for $r \ge 0$\\
(ii) $f:\left[r_{0}, \infty\right) \rightarrow \mathbb R$ with $f\left(r_{0}\right)=0$ and $f(r)>0$ for $r>r_{0}$.

In case (i), $(V, \bar{g})$ is complete. In case (ii), $V$ contains an event horizon $H =\left\{r=r_{0}\right\}$. Note that the warped product manifolds considered in \cite{brendle2013constant} are embedded as totally geodesic slices in these spacetimes.

\begin{example}\label{example1}
\begin{enumerate}
\item
An example satisfying the \nameref{assA} is the $(n+1)$-dimensional Schwarzschild spacetime with mass $m \ge 0$, which is equipped with the metric
\begin{align}\label{1.2}
\bar{g}=-\left(1-\frac{2 m}{r^{n-2}}\right) d t^{2}+\frac{1}{1-\frac{2 m}{r^{n-2}}} d r^{2}+r^{2} g_{\mathbb S^{n-1}}, \quad r^{n-2}>2 m.
\end{align}
It is the unique spherically symmetric spacetime that satisfies the vacuum Einstein equations. It can be seen that $Q=r d r \wedge d t$ is a conformal Killing-Yano two-form on the Schwarzschild spacetime.
\item
The anti-deSitter spacetime (\cite[Sec. 5]{chen2019Minkowski}) with metric
\begin{equation}\label{ads}
\overline g=-\left(1+r^{2}\right) d t^{2}+\frac{1}{1+r^{2}} d r^{2}+r^{2} g_{\mathbb S^{n-1}}, \quad r\ge0,
\end{equation}
also admits the conformal Killing-Yano two-form $Q=r d r \wedge d t$.

As explained in \cite[Lemma 3.8 and p. 261]{wang2017Minkowski} and \cite[Proposition 2.1 and Section 5]{brendle2013constant}, this metric also satisfies \nameref{assA}, as well as some other conditions in \cite{brendle2013constant}.
For the convenience of the reader, we sketch the calculation here.
Brendle \cite{brendle2013constant} writes the metric in geodesic coordinates
$$ d \bar{r}^2+h^{2}(\bar{r}) g_{N}. $$
It is equivalent to \eqref{3.10} by a change of variables $r(\overline r)=h(\overline r)$ and $\sqrt{1+r^2}=f=\frac{d h}{d \bar{r}}$. The conditions (H1) and (H2) in \cite{brendle2013constant} are easily checked
except one of the conditions in (H1) that $h^{\prime \prime}(0)>0$, which is equivalent to $f(0)f'(0)>0$. However, we can see that $f(r)f'(r)=r>0$ on $(0, \infty)$ and the condition $h''(0)>0$ is used only in \cite[Proposition 2.3]{brendle2013constant} to ensure that there exists $(0, \overline r_1)$ on which $h''>0$, which is still true in this case.

The condition (H3) in \cite{brendle2013constant} becomes $\frac{2 f(r) f^{\prime}(r)}{r}-(n-2)\frac{\left(1-f(r)^{2}\right)}{r^{2}}$ is non-decreasing. We compute $\frac{2 f(r) f^{\prime}(r)}{r}-(n-2)\frac{\left(1-f(r)^{2}\right)}{r^{2}}=n$ which is non-decreasing.
By \cite[Lemma 3.8 and p. 261]{wang2017Minkowski}, $\overline g$ satisfies \nameref{assA}.

The condition (H4) in \cite{brendle2013constant} becomes $\frac{2 f(r) f^{\prime}(r)}{r}+\frac{1- f (r)^{2}}{ r^{2}}>0$. We compute $\frac{2 f(r) f^{\prime}(r)}{r}+\frac{1- f (r)^{2}}{ r^{2}}=1>0$.

This shows that \cite[Theorem 1.1]{brendle2013constant} also holds for a time-slice in the anti-deSitter spacetime.
\end{enumerate}
\end{example}

\begin{remark}
For a spacetime $V$ that satisfies \nameref{assA}, $Q=r d r \wedge d t$ is a conformal Killing-Yano two-form and $\mathrm{div}_{V} Q=\xi=-n \frac{\partial}{\partial t}$ is a Killing field.
\end{remark}

The concept of causal future or past is crucial in general relativity when studying the geometry of spacetimes. A particular case of interest is when a submanifold lies on the null hypersurface generated by a ``round sphere'', known as a ``shear-free'' null hypersurface. These shear-free null hypersurfaces are analogues of umbilical hypersurfaces in Riemannian geometry.

Here is the precise definition, following \cite{wang2017Minkowski}:
\begin{definition}\label{shear free}
An incoming null hypersurface $\underline{ C }$ is shear-free if there exists a spacelike hypersurface $\Sigma$ in $\underline{ C }$ such that the null second fundamental form $\underline{\chi}_{a b}=\left\langle D_{a} \underline{L}, e_{b}\right\rangle$ of $\Sigma$ with respect to some null normal $\underline{L}$ satisfies $\underline{\chi}_{a b}= \psi \sigma_{a b}$ for some function $\psi. $ A shear-free outgoing null hypersurface is defined in the same way.
\end{definition}

It was conjectured by Hopf \cite{hopf2003differential} that an oriented closed hypersurface with constant mean curvature immersed in the Euclidean space must be a sphere. Although later proved by Alexandrov \cite{alexsandorov1956uniqueness} to be true if the immersion assumption is replaced by embeddedness, the condition that the hypersurface is embedded is essential in proving Alexandrov-type theorems. Indeed, Wente \cite{wente1986counterexample} found a torus immersed in $\mathbb{R}^3$ with constant mean curvature, now known as the Wente's torus, which serves as a counterexample to Hopf's conjecture. Therefore, the condition that the hypersurface is embedded is essential in proving Alexandrov-type theorems. The following definition from \cite{wang2017Minkowski} can be regarded as the Lorentzian analogue of embeddedness:
\begin{definition}\label{def null embed}
A closed embedded spacelike codimension-two submanifold $\Sigma$ in a static spacetime $V$ is future (past, respectively) incoming null embedded if the future (past, respectively) incoming null hypersurface of $\Sigma$ intersects a totally geodesic time-slice $\{t=T\} \subset V$ at a smooth, embedded, orientable hypersurface $S$.
\end{definition}

We recall the following spacetime version of the Heintze-Karcher type inequality.

\begin{theorem}[{\cite[Theorem 3.12]{wang2017Minkowski}}]\label{HK ineq}
Let $V$ be a spacetime as in \nameref{assA}. Let $\Sigma \subset V$ be a future incoming null embedded closed spacelike codimension-two submanifold with $\langle\vec{H}, \underline{L}\rangle>0$, where $\underline{L}$ is a future incoming null normal. Then
\begin{equation} \label{3.14}
-(n-1) \int_{\Sigma} \frac{\left\langle\frac{\partial}{\partial t}, \underline{L}\right\rangle}{\langle\vec{H}, \underline{L}\rangle} d \mu-\frac{1}{2} \int_{\Sigma} Q(L, \underline{L}) d \mu \ge 0
\end{equation}
for a future outgoing null normal $L$ with $\langle L, \underline{L}\rangle=-2$ and $Q=r d r \wedge d t$ is the conformal Killing-Yano two-form on $V. $ Moreover, the equality holds if and only if $\Sigma$ lies in an incoming shear-free null hypersurface.
\end{theorem}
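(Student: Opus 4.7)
The plan is to reduce the spacetime inequality to the Riemannian Heintze--Karcher inequality of Brendle \cite{brendle2013constant} on a totally geodesic time-slice by flowing $\Sigma$ along future incoming null generators. Since $\Sigma$ is future incoming null embedded, the future incoming null hypersurface $\underline{C}$ of $\Sigma$ meets the slice $M_T := \{t = T\}$ in a smooth embedded hypersurface $S \subset M$, and the null generators furnish a diffeomorphism $\Phi\colon \Sigma \to S$. First I would fix a null frame $(L,\underline{L})$ with $\langle L,\underline{L}\rangle = -2$ on $\Sigma$, with $\underline{L}$ chosen as the affine tangent of the generators carrying a convenient boundary value on $S$ built from $\partial_t$ and the $g$-outward unit normal $\nu_S$ of $S$ in $(M,g)$; this determines how $\langle\partial_t,\underline{L}\rangle$ and $Q(L,\underline{L})$ pull back under $\Phi$, using in particular that $\partial_t$ is Killing and that $Q = r\,dr\wedge dt$ interacts simply with the flow.

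Next I would propagate the null expansion $\theta := \langle \vec{H}, \underline{L}\rangle$ along each generator via the Raychaudhuri equation
\[
\frac{d\theta}{ds} \;=\; -\frac{\theta^{2}}{n-1} - |\hat\sigma|^{2} - \mathrm{Ric}(\underline{L},\underline{L}).
\]
Under the null convergence condition in \nameref{assA}, this yields a Riccati inequality which, after integration along each generator from $\Sigma$ to $S$, gives a sharp comparison between $1/\theta|_\Sigma$ and the mean curvature $H_S$ of $S$ in $(M,g)$ weighted by the lapse. Translating $d\mu_\Sigma$ to $d\mu_S$ by the area formula (with Jacobian determined by $\int \theta\,ds$), the two integrals on the left of \eqref{3.14} convert to integrals over $S$ in $(M,g)$ of the forms $f(r)/H_S$ and $r\,dr(\nu_S)$, with the inequality direction dictated by the null convergence condition.

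At this stage the problem reduces to the warped-product Heintze--Karcher inequality of Brendle in $(M,g)$, which bounds $\int_S f(r)/H_S\,d\mu_S$ below by $\int_S r\,dr(\nu_S)\,d\mu_S$ (up to the correct normalization); this applies because \nameref{assA} encodes hypotheses (H1)--(H4) of \cite{brendle2013constant}, as recorded in Example \ref{example1}. For the rigidity assertion, equality in \eqref{3.14} forces simultaneous equality in the Riccati comparison along every generator (hence $\hat\sigma \equiv 0$ and $\mathrm{Ric}(\underline{L},\underline{L}) \equiv 0$ on $\underline{C}$) and in Brendle's Heintze--Karcher inequality; Brendle's rigidity statement then identifies $S$ as a coordinate sphere $\{r=r_0\}$, and combined with $\hat\sigma \equiv 0$ along the generators this says precisely that $\underline{C}$ is shear-free. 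The converse, that a cross-section of an incoming shear-free null hypersurface saturates \eqref{3.14}, is verified by direct computation on coordinate spheres (where both sides reduce to $r_0\,\mathrm{Vol}(\Sigma)$).

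The principal difficulty is Step 2: setting up a consistent affine normalization of $\underline{L}$ along the generators so that the area-element change from $d\mu_\Sigma$ to $d\mu_S$, the lapse factor $f(r)$ emerging from $\langle\partial_t,\underline{L}\rangle$, and the value of $\theta$ at $S$ all assemble cleanly into the right-hand side of Brendle's inequality. Once this dictionary is in place, the null convergence condition feeds the Riccati inequality in the correct direction and Brendle's Riemannian Heintze--Karcher inequality yields both the inequality and the rigidity assertion.
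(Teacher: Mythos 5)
This theorem is not proved in the paper at all: it is imported verbatim from \cite[Theorem 3.12]{wang2017Minkowski}, the paper's only contribution being the algebraic rewriting recorded as Theorem \ref{HK ineq2}. Your sketch correctly reconstructs the strategy of the original proof in that reference --- push $\Sigma$ along the incoming null generators to the hypersurface $S$ in the static slice, run a Raychaudhuri/Riccati comparison using the null convergence condition to relate $\langle\vec H,\underline L\rangle$ to the mean curvature of $S$, and invoke Brendle's warped-product Heintze--Karcher inequality on the slice together with its rigidity case --- so apart from the citation there is nothing in the paper to compare it against.
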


By reversing the time orientation, a similar inequality holds.

\begin{theorem}[{\cite[Theorem 3.12]{wang2017Minkowski}}]\label{HK ineq'}
Let $V$ be a spacetime as in \nameref{assA}. Let $\Sigma \subset V$ be a past incoming null embedded closed spacelike codimension-two submanifold with $\langle\vec{H}, {L}\rangle<0$, where ${L}$ is a future outgoing null normal. Then
\begin{equation} \label{3.17}
(n-1) \int_{\Sigma} \frac{\left\langle\frac{\partial}{\partial t}, {L}\right\rangle}{\langle\vec{H}, {L}\rangle} d \mu-\frac{1}{2} \int_{\Sigma} Q(L, \underline{L}) d \mu \ge 0
\end{equation}
for a future incoming null normal $\underline L$ with $\langle L, \underline{L}\rangle=-2$ and $Q=r d r \wedge d t$ is the conformal Killing-Yano two-form on $V. $ Moreover, the equality holds if and only if $\Sigma$ lies in an outgoing shear-free null hypersurface.
\end{theorem}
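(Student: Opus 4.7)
The plan is to deduce this statement directly from Theorem \ref{HK ineq} using the time-reversing isometry of $V$, as the author's preceding remark already suggests. Under \nameref{assA}, the metric $\bar g = -f^2(r)\,dt^2 + f^{-2}(r)\,dr^2 + r^2 g_N$ is invariant under the involution $\phi(t,r,x) = (-t,r,x)$. I would set $\tilde\Sigma = \phi(\Sigma)$, $\tilde{\vec H} = \phi_* \vec H$, $\tilde L = -\phi_* \underline L$, and $\tilde{\underline L} = -\phi_* L$; the minus signs are chosen so that after $\phi_*$ reverses the time component, $\tilde L$ is future outgoing and $\tilde{\underline L}$ is future incoming, with $\langle \tilde L, \tilde{\underline L}\rangle = -2$ automatically inherited from $\langle L, \underline L\rangle = -2$.

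The first check is that the hypotheses of Theorem \ref{HK ineq} are satisfied by $\tilde\Sigma$. Because $\phi_*$ preserves radial directions but reverses time, the outgoing and incoming null lines at each normal 2-plane are exchanged (e.g.\ $\partial_t + \partial_r \mapsto -\partial_t + \partial_r$, which spans the incoming line). Consequently the past portion of the outgoing null hypersurface of $\Sigma$ -- which is the past incoming null hypersurface in the sense of Definition \ref{def null embed} -- is sent to the future portion of the incoming null hypersurface of $\tilde\Sigma$. Hence $\tilde\Sigma$ is future incoming null embedded (meeting the slice $\{t=-T\}$ in the same hypersurface $S$), and the sign hypothesis transfers via $\langle \tilde{\vec H}, \tilde{\underline L}\rangle = -\langle \vec H, L\rangle > 0$.

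Applying Theorem \ref{HK ineq} to $\tilde\Sigma$ with the null frame $(\tilde L,\tilde{\underline L})$ produces an inequality on $\tilde\Sigma$, which I would pull back via the three identities $\phi_*(\partial/\partial t) = -\partial/\partial t$, $\phi^* Q = -Q$ (since $Q = r\,dr\wedge dt$ and $\phi^* dt = -dt$), and the area-preserving property of the isometry $\phi$. The two minus signs combine so that the first term acquires an overall extra sign -- producing the $+(n-1)$ factor in \eqref{3.17} -- while $Q(\tilde L,\tilde{\underline L})$ translates to $Q(L,\underline L)$ with sign unchanged. For rigidity, the equality case of Theorem \ref{HK ineq} places $\tilde\Sigma$ in an incoming shear-free null hypersurface; pulling back by $\phi^{-1}$ and invoking the same line-swap shows that $\Sigma$ lies in an outgoing shear-free null hypersurface.

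The only real obstacle is the careful bookkeeping of these signs and the correct identification of the "past incoming" language in Definition \ref{def null embed} with its image under time reversal. Once one verifies that the incoming null line at $p \in \Sigma$ is mapped by $\phi_*$ to the outgoing null line at $\phi(p) \in \tilde\Sigma$ (and vice versa), every remaining step is a routine computation, and no geometric input beyond Theorem \ref{HK ineq} is required.
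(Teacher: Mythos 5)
Your proposal is correct and is exactly the paper's route: the paper disposes of this statement with the single remark ``By reversing the time orientation, a similar inequality holds,'' i.e.\ it deduces Theorem \ref{HK ineq'} from Theorem \ref{HK ineq} via the isometry $(t,r,x)\mapsto(-t,r,x)$, which is what you carry out. Your sign bookkeeping (in particular $\phi^*Q=-Q$, $\langle\tilde{\vec H},\tilde{\underline L}\rangle=-\langle\vec H,L\rangle$, and the identification of the past incoming null hypersurface of $\Sigma$ with the future incoming one of $\phi(\Sigma)$) checks out and correctly reproduces \eqref{3.17} together with the rigidity statement.
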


As in Theorem \ref{mink2}, we can rewrite Theorem \ref{HK ineq} and \ref{HK ineq'} as follows.
\begin{theorem}\label{HK ineq2}
With the same assumption as in Theorem \ref{HK ineq}, we have
\begin{align}\label{3.14'}
-(n-1) \int_{\Sigma} \frac{1}{|\vec{H}|^{2}} \left\langle\frac{\partial }{\partial t }, \vec{H}+\vec{J}\right\rangle d \mu+ \int_{\Sigma} \frac{1}{|\vec{H}|^{2}} Q(\vec{H}, \vec{J}) d \mu \ge 0.
\end{align}
Moreover, the equality holds if and only if $\Sigma$ lies in an incoming shear-free null hypersurface.
\end{theorem}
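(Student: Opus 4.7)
The plan is to recognize that Theorem \ref{HK ineq2} is not a new result but rather Theorem \ref{HK ineq} reformulated in the distinguished null frame adapted to $\vec H$; so the proof reduces to a one-line substitution plus algebra, exactly in parallel with how Theorem \ref{mink2} was derived from formula \eqref{2.2}. Since $\vec H$ is assumed spacelike, I would introduce
$$\underline{L}=\frac{1}{|\vec H|}\bigl(\vec H+\vec J\bigr),\qquad L=\frac{1}{|\vec H|}\bigl(-\vec H+\vec J\bigr),$$
and use the defining properties $\langle\vec H,\vec J\rangle=0$ and $\langle\vec J,\vec J\rangle=-|\vec H|^{2}$ to check that $\underline{L},L$ are both null, with $\langle L,\underline L\rangle=-2$. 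Thus $(L,\underline L)$ is a valid normalised null frame of the kind required in Theorem \ref{HK ineq}. By construction $\vec H+\vec J$ lies along the incoming light cone, which is exactly how $\vec J$ was introduced, so $\underline L$ is future incoming after fixing the time orientation; this is the one bookkeeping point one must verify from the definition of $\vec J$ in terms of the oriented frame $\{e_n,e_{n+1}\}$.

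Next I would translate each term of \eqref{3.14} into this frame. A direct computation gives $\langle\vec H,\underline L\rangle=\frac{1}{|\vec H|}\langle\vec H,\vec H+\vec J\rangle=|\vec H|>0$, so the hypothesis $\langle\vec H,\underline L\rangle>0$ of Theorem \ref{HK ineq} is automatic. The first integrand in \eqref{3.14} then becomes
$$\frac{\langle\partial_t,\underline L\rangle}{\langle\vec H,\underline L\rangle}=\frac{1}{|\vec H|^{2}}\bigl\langle\partial_t,\vec H+\vec J\bigr\rangle.$$
For the second integrand, antisymmetry of $Q$ kills the diagonal terms $Q(\vec H,\vec H)=Q(\vec J,\vec J)=0$ and gives $Q(\vec J,\vec H)=-Q(\vec H,\vec J)$, so
$$Q(L,\underline L)=\frac{1}{|\vec H|^{2}}Q\bigl(-\vec H+\vec J,\vec H+\vec J\bigr)=-\frac{2}{|\vec H|^{2}}Q(\vec H,\vec J),$$
and hence $-\frac{1}{2}Q(L,\underline L)=\frac{1}{|\vec H|^{2}}Q(\vec H,\vec J)$. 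Substituting both identities into \eqref{3.14} yields \eqref{3.14'} on the nose.

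For the equality case, since the translation above is a pointwise equality of integrands, equality in \eqref{3.14'} holds if and only if equality in \eqref{3.14} holds, which by the rigidity half of Theorem \ref{HK ineq} occurs if and only if $\Sigma$ lies in an incoming shear-free null hypersurface. The only real work, apart from the elementary algebra, is checking the time orientation of $\underline L$; I do not foresee any deeper obstacle, as the identity is essentially a change of null gauge and the underlying Heintze–Karcher inequality of \cite{wang2017Minkowski} supplies everything that is not purely linear-algebraic.
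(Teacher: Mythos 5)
Your proposal is correct and matches the paper's (implicit) argument exactly: the paper offers no separate proof, stating only that Theorem \ref{HK ineq} can be ``rewritten'' as in Theorem \ref{mink2}, which is precisely your substitution $\underline{L}=\frac{1}{|\vec H|}(\vec H+\vec J)$, $L=\frac{1}{|\vec H|}(-\vec H+\vec J)$ followed by the algebra you carry out. The identities $\langle\vec H,\underline L\rangle=|\vec H|$, $\langle L,\underline L\rangle=-2$ and $-\tfrac{1}{2}Q(L,\underline L)=\frac{1}{|\vec H|^{2}}Q(\vec H,\vec J)$ all check out, and your flagged bookkeeping point (that $\vec H+\vec J$ is the future incoming direction) is consistent with the conventions the paper uses elsewhere for this same frame.
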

\begin{theorem}
With the same assumption as in Theorem \ref{HK ineq'}, we have
\begin{equation}\label{3.14''}
-(n-1) \int_{\Sigma} \frac{1}{|\vec{H}|^{2}}\left\langle\frac{\partial}{\partial t}, -\vec{H}+\vec{J}\right\rangle d \mu+\int_{\Sigma} \frac{1}{|\vec{H}|^{2}} Q(\vec{H}, \vec{J}) d \mu \ge 0.
\end{equation}
The equality holds if and only if $\Sigma$ lies in an outgoing shear-free null hypersurface.
\end{theorem}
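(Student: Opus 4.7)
The plan is to recognize this statement as the mean-curvature-gauge reformulation of Theorem \ref{HK ineq'}, exactly paralleling how Theorem \ref{HK ineq2} was obtained from Theorem \ref{HK ineq}. Indeed, the shift from a ``generic'' future outgoing null normal $L$ (with $\langle L,\underline L\rangle=-2$) to the canonical null frame determined by $\vec H$ is all that is needed; no new geometric input is required.

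Concretely, I would set
\begin{equation*}
L=\frac{1}{|\vec{H}|}\bigl(-\vec{H}+\vec{J}\bigr),\qquad \underline{L}=\frac{1}{|\vec{H}|}\bigl(\vec{H}+\vec{J}\bigr),
\end{equation*}
and verify the normalization: using $\langle\vec{J},\vec{J}\rangle=-\langle\vec{H},\vec{H}\rangle$ and $\langle\vec{J},\vec{H}\rangle=0$, one gets $\langle L,L\rangle=0$, $\langle\underline L,\underline L\rangle=0$ and $\langle L,\underline L\rangle=-2$. Moreover $\langle \vec H,L\rangle=-|\vec H|<0$, so the sign hypothesis of Theorem \ref{HK ineq'} is satisfied, and $L$ is future outgoing while $\underline L$ is future incoming (by construction of $\vec J$ via the chosen orientation of the normal bundle).

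Next I would substitute these expressions into the two terms of \eqref{3.17}. For the first term,
\begin{equation*}
\frac{\bigl\langle\tfrac{\partial}{\partial t},L\bigr\rangle}{\langle\vec{H},L\rangle}
=\frac{|\vec H|^{-1}\bigl\langle\tfrac{\partial}{\partial t},-\vec{H}+\vec{J}\bigr\rangle}{-|\vec{H}|}
=-\frac{1}{|\vec{H}|^{2}}\bigl\langle\tfrac{\partial}{\partial t},-\vec{H}+\vec{J}\bigr\rangle,
\end{equation*}
so that $(n-1)\int_\Sigma\frac{\langle\partial_t,L\rangle}{\langle\vec H,L\rangle}d\mu$ becomes the first term of \eqref{3.14''}. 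For the second term, antisymmetry of the two-form $Q$ kills the $Q(\vec H,\vec H)$ and $Q(\vec J,\vec J)$ pieces, leaving
\begin{equation*}
Q(L,\underline{L})=\frac{1}{|\vec{H}|^{2}}\bigl(Q(-\vec{H},\vec{J})+Q(\vec{J},\vec{H})\bigr)=-\frac{2}{|\vec{H}|^{2}}Q(\vec{H},\vec{J}),
\end{equation*}
so that $-\tfrac12\int_\Sigma Q(L,\underline L)d\mu$ reproduces the second term of \eqref{3.14''}. Adding the two contributions yields the claimed inequality.

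There is essentially no obstacle here: the only things to be careful about are the sign of $\langle\vec H,L\rangle$ (which must be negative to match the hypothesis of Theorem \ref{HK ineq'}) and the normalization $\langle L,\underline L\rangle=-2$. Both are verified by the defining relations of $\vec J$. For the equality case, the rigidity in Theorem \ref{HK ineq'} transfers verbatim: equality in \eqref{3.14''} corresponds to equality in \eqref{3.17} for this particular choice of null frame, which forces $\Sigma$ to lie in an outgoing shear-free null hypersurface, and conversely.
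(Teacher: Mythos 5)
Your proposal is correct and is exactly the computation the paper intends: the paper derives \eqref{3.14''} from \eqref{3.17} by the same substitution $L=\frac{1}{|\vec H|}(-\vec H+\vec J)$, $\underline L=\frac{1}{|\vec H|}(\vec H+\vec J)$ used in the proof of Theorem \ref{mink2}, merely remarking ``as in Theorem \ref{mink2}'' without writing out the details. Your verification of the normalization $\langle L,\underline L\rangle=-2$, the sign $\langle\vec H,L\rangle=-|\vec H|<0$, and the identity $Q(L,\underline L)=-\tfrac{2}{|\vec H|^2}Q(\vec H,\vec J)$ fills in those details accurately, and the equality case transfers as you say.
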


\section{An integral Alexandrov type theorem}\label{sec integral Alexandrov type theorem}
The following proposition explains the appearance of the quantity $Q(\nabla(\log |\vec{H}|)-\sum_{a=1}^{n-1} \alpha_{\vec{H}}\left(e_a\right) e_a, \vec{H}+\vec{J})$ in our Theorem \ref{thm intro1}.
\begin{proposition}
Suppose the mean curvature vector $\vec H$ of $\Sigma$ is spacelike. Define the two null vectors
$\underline{L}=\frac{1}{|\vec{H}|}\left(\frac{\vec{H}}{|\vec{H}|}+\frac{\vec{J}}{|\vec{H}|}\right)$ and $L=|\vec{H}|\left(-\frac{\vec{H}}{|\vec{H}|}+\frac{\vec{J}}{|\vec{H}|}\right)$. Then $\langle L, \underline{L}\rangle=-2$, $\langle H, \underline L\rangle =1$ and
\begin{equation*}
d \log |\vec{H}|-\alpha_{\vec{H}}= \frac{1}{2}\left\langle D \underline{L}, L\right\rangle.
\end{equation*}
\end{proposition}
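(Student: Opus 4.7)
The plan is to verify the three claims by direct computation in the pseudo-orthonormal frame $\{e_n^{\vec H}, e_{n+1}^{\vec H}\}$ of the normal bundle, using the defining identities $\langle \vec H, \vec J\rangle = 0$ and $\langle \vec J, \vec J\rangle = -|\vec H|^2$ recalled just before Theorem \ref{mink2}. The first two assertions are purely algebraic: expanding gives $\langle \vec H, \vec H + \vec J\rangle = |\vec H|^2$ and $\langle -\vec H + \vec J,\, \vec H + \vec J\rangle = -|\vec H|^2 + \langle \vec J, \vec J\rangle = -2|\vec H|^2$, which immediately yield $\langle \vec H, \underline L\rangle = 1$ and $\langle L, \underline L\rangle = -2$.

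For the main identity, the key observation is that $\underline L$ is null, so $\langle D_V\underline L, \underline L\rangle = \tfrac12 V\langle \underline L, \underline L\rangle = 0$ for every $V \in T\Sigma$. Hence $(D_V\underline L)^\perp$ is a normal vector orthogonal to $\underline L$; since $\{L, \underline L\}$ is a basis of the $2$-dimensional normal bundle with $\langle L, \underline L\rangle = -2$, any such vector must be a scalar multiple of $\underline L$ itself. Writing
\[
(D_V\underline L)^\perp = c(V)\,\underline L
\]
and pairing with $L$ gives $c(V) = -\tfrac12 \langle D_V\underline L, L\rangle$. So it suffices to identify $c(V)$ in terms of $V\log|\vec H|$ and $\alpha_{\vec H}(V)$.

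To do that, I would rewrite $\vec H = -|\vec H|\,e_n^{\vec H}$ and $\vec J = |\vec H|\,e_{n+1}^{\vec H}$, so that $\underline L = |\vec H|^{-1}(e_{n+1}^{\vec H} - e_n^{\vec H})$, then differentiate via the product rule. Using the definition $\alpha_{\vec H}(V) = \langle D_V e_n^{\vec H}, e_{n+1}^{\vec H}\rangle$ together with the relations $\langle e_n^{\vec H}, e_n^{\vec H}\rangle = 1$, $\langle e_{n+1}^{\vec H}, e_{n+1}^{\vec H}\rangle = -1$, $\langle e_n^{\vec H}, e_{n+1}^{\vec H}\rangle = 0$, the normal projections come out to be $(D_V e_n^{\vec H})^\perp = -\alpha_{\vec H}(V)\,e_{n+1}^{\vec H}$ and $(D_V e_{n+1}^{\vec H})^\perp = -\alpha_{\vec H}(V)\,e_n^{\vec H}$. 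Assembling these yields
\[
(D_V \underline L)^\perp = \bigl(\alpha_{\vec H}(V) - V\log|\vec H|\bigr)\,\underline L,
\]
so $c(V) = \alpha_{\vec H}(V) - V\log|\vec H|$. Combining with $c(V) = -\tfrac12\langle D_V\underline L, L\rangle$ gives $V\log|\vec H| - \alpha_{\vec H}(V) = \tfrac12\langle D_V\underline L, L\rangle$, which is exactly $d\log|\vec H| - \alpha_{\vec H} = \tfrac12\langle D\underline L, L\rangle$.

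There is no real obstacle in this proof; the only care required is bookkeeping the signs $\epsilon_n = +1$, $\epsilon_{n+1} = -1$ when decomposing normal vectors as $X^\perp = \langle X, e_n^{\vec H}\rangle e_n^{\vec H} - \langle X, e_{n+1}^{\vec H}\rangle e_{n+1}^{\vec H}$ in the pseudo-orthonormal frame. Once the signs are tracked correctly, everything reduces to a one-line product-rule computation, and the proposition follows.
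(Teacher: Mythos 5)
Your proposal is correct and follows essentially the same route as the paper: the paper's proof verifies the two algebraic identities and then simply quotes the computation of $\frac{1}{2}\langle D_{e_a}\underline L, L\rangle$ from Proposition 3.4 of \cite{wang2017Minkowski}, and your explicit calculation in the frame $\{e_n^{\vec H}, e_{n+1}^{\vec H}\}$ (with the correct signs $(D_Ve_n^{\vec H})^\perp=-\alpha_{\vec H}(V)e_{n+1}^{\vec H}$, $(D_Ve_{n+1}^{\vec H})^\perp=-\alpha_{\vec H}(V)e_n^{\vec H}$, and $c(V)=-\tfrac12\langle D_V\underline L,L\rangle$ from $\langle L,\underline L\rangle=-2$) is precisely that computation carried out in full.
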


\begin{proof}
It is easy to verify that $\langle L, \underline{L}\rangle=-2$ and $\langle \vec H, \underline L\rangle =1$.
If follows from the computation in \cite[Proposition 3.4]{wang2017Minkowski} that
$\frac{1}{2}\left\langle D_{e_a} \underline L, L \right\rangle
=d\log |\vec{H}|(e_a) -\alpha_{\vec H} (e_a)$ and the result follows.
\end{proof}

Now, we are ready to state our first main result, which generalizes \cite[Theorem 3.14]{wang2017Minkowski}.

\begin{theorem}\label{alex}
Let $V$ be a spherically symmetric spacetime as in \nameref{assA} and $\Sigma$ be a
closed embedded, spacelike codimension-two submanifold in $V$ such that $\vec H$ is spacelike.

\begin{enumerate}
\item Assume $\Sigma$ is future incoming null embedded. Then $\Sigma$ satisfies
\begin{equation}\label{cond}
\int_\Sigma \frac{1}{|\vec{H}|^{2}} Q\left(\nabla(\log |\vec{H}|)-\sum_{a=1}^{n-1} \alpha_{\vec{H}}\left(e_{a}\right) e_{a}, \vec{H}+\vec{J}\right)d\mu\le 0
\end{equation}
if and only if $\Sigma$ lies in an incoming shear-free null hypersurface.
\item Assume $\Sigma$ is future outgoing null embedded. Then $\Sigma$ satisfies
\begin{equation*}
\int_\Sigma \frac{1}{|\vec{H}|^{2}} Q\left(\nabla(\log |\vec{H}|)+\sum_{a=1}^{n-1} \alpha_{\vec{H}}\left(e_{a}\right) e_{a}, -\vec{H}+\vec{J}\right)d\mu\le 0
\end{equation*}
if and only if $\Sigma$ lies in an outgoing shear-free null hypersurface.
\end{enumerate}
\end{theorem}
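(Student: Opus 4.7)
The plan is to combine the weighted spacetime Minkowski formula \eqref{2.2'} (choosing the weight $f = 1/|\vec H|$) with the Heintze--Karcher type inequality \eqref{3.14'} from Theorem \ref{HK ineq2}. The choice $f = 1/|\vec H|$ is engineered precisely so that the ``gradient plus connection'' term appearing in \eqref{2.2'} reproduces, up to sign, the integrand in the hypothesis \eqref{cond}. This mirrors the Euclidean model argument \eqref{intro rigid} sketched in the introduction, with the Heintze--Karcher step supplying the rigidity.

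Substituting $f = 1/|\vec H|$ and using $\nabla(1/|\vec H|) = -|\vec H|^{-1}\nabla\log|\vec H|$, one finds
\begin{equation*}
\nabla f + f\sum_{a=1}^{n-1}\alpha_{\vec H}(e_a)e_a = -\frac{1}{|\vec H|}\Bigl(\nabla\log|\vec H| - \sum_{a=1}^{n-1}\alpha_{\vec H}(e_a)e_a\Bigr).
\end{equation*}
Together with $\xi = -n\,\partial/\partial t$, valid under \nameref{assA}, formula \eqref{2.2'} rearranges to
\begin{equation*}
-(n-1)\int_\Sigma \frac{1}{|\vec H|^2}\Bigl\langle \tfrac{\partial}{\partial t},\, \vec H + \vec J\Bigr\rangle d\mu + \int_\Sigma \frac{1}{|\vec H|^2} Q(\vec H,\vec J)\, d\mu = \int_\Sigma \frac{1}{|\vec H|^2} Q\Bigl(\nabla\log|\vec H| - \sum_{a=1}^{n-1}\alpha_{\vec H}(e_a)e_a,\, \vec H + \vec J\Bigr) d\mu.
\end{equation*}
The left-hand side is precisely the quantity that Theorem \ref{HK ineq2} certifies to be $\ge 0$, with equality exactly when $\Sigma$ lies in an incoming shear-free null hypersurface; Theorem \ref{HK ineq2} applies because, by the proposition immediately preceding the statement, $\underline L := |\vec H|^{-2}(\vec H + \vec J)$ is a future incoming null normal with $\langle \vec H,\underline L\rangle = 1 > 0$.

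From here the theorem follows by elementary bookkeeping. Under the hypothesis \eqref{cond} the right-hand side of the displayed identity is $\le 0$, while the left-hand side is $\ge 0$; since the two are equal, both vanish and the rigidity clause of Theorem \ref{HK ineq2} forces $\Sigma$ to lie in an incoming shear-free null hypersurface, proving the ``only if'' direction. Conversely, if $\Sigma$ already lies in such a hypersurface, the rigidity clause makes the left-hand side zero, hence the right-hand side is zero, and \eqref{cond} holds (as an equality). Part (2) is proved identically by applying \eqref{2.2''} with $f = 1/|\vec H|$ together with the time-reversed Heintze--Karcher inequality \eqref{3.14''}; the sign flips $\vec H \mapsto -\vec H$ and the minus sign in front of $\sum_a \alpha_{\vec H}(e_a)e_a$ in \eqref{2.2''} match the corresponding signs in the hypothesis of part (2). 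I do not foresee any real obstacle beyond tracking these signs and confirming the future/past orientation of the null normals, both of which are already handled by the preceding proposition; no new inequality or estimate is required.
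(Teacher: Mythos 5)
Your proposal is correct and follows essentially the same route as the paper: substitute $f=1/|\vec H|$ into the weighted Minkowski formula \eqref{2.2'} so that the resulting identity expresses the Heintze--Karcher quantity of Theorem \ref{HK ineq2} as the integral in \eqref{cond}, then invoke the rigidity case. The only cosmetic difference is that for the converse you use the rigidity clause directly (shear-free implies equality, hence the integral vanishes), whereas the paper argues the contrapositive (a positive integral forces strict inequality in \eqref{3.14'}, hence $\Sigma$ is not shear-free); these are logically interchangeable.
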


\begin{proof}
Let $f=\frac{1}{|\vec H| }$. Assume $\Sigma$ is future incoming null embedded.
From the assumption \eqref{cond}, the weighted spacetime Minkowski formula \eqref{2.2'} becomes
\begin{align*}
& -(n-1) \int_{\Sigma} \frac{1}{|\vec{H}|^{2}}\left\langle\frac{\partial}{\partial t}, \vec{H}+\vec{J}\right\rangle d \mu+\int_{\Sigma} \frac{1}{|\vec{H}|^{2}} Q(\vec{H}, \vec{J}) d \mu\\
=& \int_\Sigma \frac{1}{|\vec{H}|} Q\left(\frac{\nabla|\vec H|}{|\vec H|^{2}} - \sum_{a=1}^{n-1}\frac{\alpha_{\vec H}\left(e_{a}\right) e_{a}}{|\vec H|}, \vec{H}+\vec{J}\right)d\mu\\
=& \int_\Sigma\frac{1}{|\vec{H}|^2} Q\left(\nabla\left(\log|\vec H|\right) - \sum_{a=1}^{n-1}\alpha_{\vec H}\left(e_{a}\right) e_{a}, \vec{H}+\vec{J}\right)d\mu\\
\le& 0.
\end{align*}
Hence the equality is achieved in the spacetime Heintze-Karcher inequality \eqref{3.14'} and we conclude that $\Sigma$ lies in an incoming shear-free null hypersurface.

Moreover, if \eqref{cond} does not hold, i.e., $\int_{\Sigma} \frac{1}{|\vec{H}|^2} Q\left(\nabla(\log |\vec{H}|)-\sum_{a=1}^{n-1} \alpha_{\vec{H}}\left(e_a\right) e_a, \vec{H}+\vec{J}\right) d \mu>0$, then the above argument shows that
$$
-(n-1) \int_{\Sigma} \frac{1}{|\vec{H}|^2}\left\langle\frac{\partial}{\partial t}, \vec{H}+\vec{J}\right\rangle d \mu+\int_{\Sigma} \frac{1}{|\vec{H}|^2} Q(\vec{H}, \vec{J}) d \mu>0.
$$
Therefore, the spacetime Heintze-Karcher inequality \eqref{3.14'} is a strict inequality, and hence $\Sigma$ does not lie in an incoming shear-free null hypersurface.

The proof of the future outgoing null embedded case is the same by applying \eqref{2.2''} and \eqref{3.14''} instead.
\end{proof}

If $N=(\mathbb S^{n-1}, g_{\mathbb S^{n-1}})$ in \eqref{3.10}, the spacetime is spherically symmetric. A null hypersurface in an $(n+1)$-dimensional spherically symmetric spacetime is called a null hypersurface of symmetry if it is invariant under the $SO(n)$ isometry.
In other words, it is generated by a sphere of symmetry.

The exterior Schwarzschild spacetime metric \eqref{1.2}
and the anti-deSitter spacetime metric \eqref{ads}
both satisfy \nameref{assA}. Since the spheres of symmetry are the only closed umbilical hypersurfaces in the totally geodesic time slice of
these spacetimes \cite[Theorem 1.1, Corollary 1.2]{brendle2013constant} (see also the remark in Example \ref{example1} (2)), as a direct corollary of Theorem \ref{alex}, we obtain:
\begin{theorem}\label{thm Schwarzschild}
With the same assumptions in Theorem \ref{alex}, and assume further that
$V$ is either the Schwarzschild spacetime or the anti-deSitter spacetime,
then the ``shear-free null hypersurface'' in the conclusion of Theorem \ref{alex} can be replaced by ``null hypersurface of symmetry''.
\end{theorem}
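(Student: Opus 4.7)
The plan is to treat Theorem \ref{thm Schwarzschild} as a direct corollary of Theorem \ref{alex} combined with the classical rigidity of closed umbilical hypersurfaces in the totally geodesic time slices of the Schwarzschild and anti-deSitter spacetimes, due to Brendle \cite{brendle2013constant} (adapted to the anti-deSitter case as sketched in Example \ref{example1}(2)). In other words, once Theorem \ref{alex} produces a shear-free null hypersurface through $\Sigma$, the remaining work is to upgrade ``shear-free'' to ``of symmetry'' by applying an Alexandrov-type theorem on the time slice.

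Concretely, I would proceed as follows. First, apply Theorem \ref{alex} (case (1) or (2) depending on which null-embedded hypothesis is assumed) to conclude that $\Sigma$ is contained in an incoming (respectively outgoing) shear-free null hypersurface $\underline{C}$ (respectively $C$). Next, use the future/past incoming (or outgoing) null embedded hypothesis, which guarantees that $\underline{C}$ (or $C$) meets a totally geodesic time slice $\{t=T\}$ in a smooth embedded orientable hypersurface $S$ (Definition \ref{def null embed}). By Definition \ref{shear free}, the null second fundamental form of $S$ inside $\underline{C}$ with respect to some null normal $\underline{L}$ is pure trace, i.e.\ $\underline{\chi}_{ab} = \psi\,\sigma_{ab}$; a standard computation for a spacelike slice of a null hypersurface in a static spherically symmetric spacetime then translates this into the statement that $S$ is a closed, embedded, umbilical hypersurface of the Riemannian manifold $(\{t=T\}, g)$ with $g$ as in \eqref{3.10}.

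Now I invoke the Riemannian rigidity result: in both the exterior Schwarzschild spacetime and the anti-deSitter spacetime, the totally geodesic time slice is a warped product satisfying the hypotheses (H1)--(H4) of \cite{brendle2013constant}, as verified explicitly in Example \ref{example1}(2) for anti-deSitter and as is well known in the Schwarzschild case. Brendle's Alexandrov theorem \cite[Theorem~1.1, Corollary~1.2]{brendle2013constant} then forces $S$ to be a sphere of symmetry, i.e.\ a level set of $r$ in the $SO(n)$ action. Because $\underline{C}$ (resp.\ $C$) is the null hypersurface generated by $S$, the $SO(n)$-invariance of $S$ propagates along the null geodesic flow to $SO(n)$-invariance of $\underline{C}$, so $\underline{C}$ is a null hypersurface of symmetry.

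There is essentially no obstacle of substance here; the only thing to be careful about is bookkeeping in the translation ``shear-free slice $\Longleftrightarrow$ umbilical in the time slice'', and checking that the hypotheses of Brendle's theorem indeed apply to both ambient spacetimes. The Schwarzschild case is immediate and the anti-deSitter case is already handled in Example \ref{example1}(2). The converse direction (spheres of symmetry generate incoming/outgoing null hypersurfaces of symmetry, whose spacelike cross sections satisfy \eqref{cond}) is trivial by symmetry and is subsumed by the ``if'' direction of Theorem \ref{alex}. Hence the proof is at most a few lines once the ingredients are in place.
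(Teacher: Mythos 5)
Your proposal is correct and follows essentially the same route as the paper: the paper also deduces Theorem \ref{thm Schwarzschild} as a direct corollary of Theorem \ref{alex} by noting that the shear-free null hypersurface meets a totally geodesic time slice in a closed embedded umbilical hypersurface, which must be a sphere of symmetry by \cite[Theorem 1.1, Corollary 1.2]{brendle2013constant} (with the anti-deSitter verification from Example \ref{example1}(2)). Your write-up merely makes explicit the bookkeeping steps (shear-free slice $\Leftrightarrow$ umbilical cross-section, propagation of $SO(n)$-invariance along the null generators) that the paper leaves implicit.
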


The following result generalizes the fact that a closed embedded spacelike
codimension-two submanifold with parallel mean curvature vector in the
Schwarzschild spacetime is a sphere of symmetry, see \cite[Cor. C]{wang2017Minkowski} and also \cite{chen1973surface}, \cite{yau1974submanifolds}.
\begin{corollary}
Let $\Sigma$ be a closed embedded spacelike codimension-two submanifold with space like mean curvature $\vec H$ in the Schwarzschild spacetime or the anti-deSitter spacetime. Suppose $\Sigma$ is both future and past incoming null embedded.
Then a necessary and sufficient condition for $\Sigma$ to be a sphere of symmetry is that
\begin{align*}
\int_{\Sigma} \frac{1}{|\vec{H}|^2} Q\left(\nabla(\log |\vec{H}|)-\sum_{a=1}^{n-1} \alpha_{\vec{H}}\left(e_a\right) e_a, \vec{H}+\vec{J}\right) d \mu \le 0
\end{align*}
and
\begin{align*}
\int_{\Sigma} \frac{1}{|\vec{H}|^2} Q\left(\nabla(\log |\vec{H}|)+\sum_{a=1}^{n-1} \alpha_{\vec{H}}\left(e_a\right) e_a, -\vec{H}+\vec{J}\right) d \mu \le 0.
\end{align*}
\end{corollary}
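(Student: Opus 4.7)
The strategy is to apply the two parts of Theorem \ref{alex} separately, one to each of the two integral hypotheses, use Theorem \ref{thm Schwarzschild} to upgrade ``shear-free null hypersurface'' to ``null hypersurface of symmetry'', and then intersect these two null hypersurfaces of symmetry in the spherically symmetric ambient to pin $\Sigma$ down to a single sphere of symmetry.

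First I would check that the two embeddedness hypotheses supply exactly what is needed to invoke the two underlying Heintze-Karcher inequalities (Theorems \ref{HK ineq} and \ref{HK ineq'}) in their reformulated versions. In a static ambient such as Schwarzschild or anti-deSitter, past incoming null embeddedness is equivalent to future outgoing null embeddedness via time-translation symmetry, so Theorem \ref{alex}(2) applies. The first integral inequality together with Theorem \ref{alex}(1) then forces $\Sigma$ into an incoming shear-free null hypersurface $\underline{C}$, while the second inequality together with Theorem \ref{alex}(2) forces $\Sigma$ into an outgoing shear-free null hypersurface $C$. Theorem \ref{thm Schwarzschild} upgrades both $\underline{C}$ and $C$ to null hypersurfaces of symmetry.

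Next I would intersect $\underline{C}$ and $C$. In a spherically symmetric spacetime a null hypersurface of symmetry is the $SO(n)$-orbit of a single sphere $\{(t_0,r_0)\}\times N$ transported along a null geodesic in the two-dimensional Lorentzian quotient $V/SO(n)$. The projections of $\underline{C}$ and $C$ to this quotient are two null curves of opposite families, which meet in at most one point $(t^*,r^*)$. Since $\Sigma\subset \underline{C}\cap C$ is a nonempty closed connected spacelike codimension-two submanifold of the same dimension as $\{(t^*,r^*)\}\times N$, it must coincide with that sphere of symmetry.

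For the converse, if $\Sigma$ is a sphere of symmetry then $SO(n)$-invariance forces $|\vec H|$ to be constant along $\Sigma$; moreover the mean curvature frame $\{e_n^{\vec H},e_{n+1}^{\vec H}\}$ is a linear combination of the symmetry-invariant fields $\partial_r$ and $\partial_t$, and a direct calculation in the warped product metric $-f^2 dt^2+f^{-2}dr^2+r^2 g_N$ yields $\alpha_{\vec H}\equiv 0$, so both integrands vanish pointwise and both inequalities hold with equality. The main subtlety is the intersection step: one needs to verify that the two projected null curves in $V/SO(n)$ are indeed transverse (of opposite families) and meet within the region of coordinate validity; once this is granted, everything else is an immediate consequence of Theorems \ref{alex} and \ref{thm Schwarzschild}.
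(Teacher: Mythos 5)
Your proposal follows the paper's own (very brief) proof: apply the two parts of Theorem \ref{alex}, upgraded by Theorem \ref{thm Schwarzschild}, to place $\Sigma$ in one incoming and one outgoing null hypersurface of symmetry, whose intersection is a single sphere of symmetry; the extra detail you give on the intersection step and on the converse is consistent with, and fills in, what the paper leaves implicit (the converse also follows more cheaply from the ``if and only if'' in Theorem \ref{alex} itself). One caveat: your assertion that past incoming null embeddedness is equivalent to future outgoing null embeddedness \emph{via time-translation symmetry} is not a valid justification (neither a time translation nor the reflection $t\mapsto -t$ carries the future outgoing cone to the past incoming cone), but no such conversion is actually needed, because the outgoing Heintze--Karcher inequality \eqref{3.14''} underlying Theorem \ref{alex}(2) is itself stated under the past incoming null embedded hypothesis, which is exactly what the corollary assumes.
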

\begin{proof}
Theorem \ref{alex} and \ref{thm Schwarzschild} imply that $\Sigma$ is the intersection of one incoming and one outgoing null hypersurface of symmetry. Therefore, $\Sigma$ is a sphere of symmetry.
\end{proof}
\section{Generalized Alexandrov theorems for mixed higher order mean curvatures in Lorentzian space forms}\label{sec higher}

In this section, we will first introduce the mixed higher order mean curvatures mixed higher order mean curvature $P_{r, s} $ of a codimension-two spacelike submanifold in an $(n+1)$-dimensional spacetime $V$, and the corresponding weighted Minkowski formulas on $\Sigma$. Since the divergence of the corresponding Newton tensor now involves the ambient curvature, the Minkowski formulas are more complicated than the one in Section \ref{sec mink} in general.

In \cite[Section 4, Section 5]{wang2017Minkowski}, Minkowski formulas and Alexandrov type theorems were established under the assumption that the spacetime has a constant curvature and the submanifold $\Sigma$ is torsion-free. We will demonstrate that these Alexandrov type theorems can be generalized in three aspects: Firstly, the pointwise condition can be substituted with a condition on an integral. Secondly, the constancy condition can be replaced with an inequality condition. Thirdly, we can replace the torsion-free condition, which depends on the choice of null frame $(L, \underline L)$, with a cohomology condition that is independent of the choice of null frame.

The torsion-free condition is that the connection $1$-form $\zeta_L=0$, defined below. However, it is not always clear whether there exists a null frame in which this condition is true, since it is not a condition that is invariant under the change of null frame $L\to fL, \underline L\to \frac{1}{f}\underline L$.
The last improvement is possible thanks to the observation that while the connection $1$-form $\zeta_L$ depends on the choice of the null frame, its exterior derivative does not. This allows us to relax the torsion-free condition to the condition that $d \zeta=0$ and $[\zeta]=0 \in H_{d R}^1(\Sigma)$, which we consider more natural.

Let us define the necessary terms.
Let $L$ and $\underline{L}$ be two null normals of $\Sigma$ such that $L$ is future outgoing and $\langle L, \underline{L}\rangle=-2$. The null second fundamental forms with respect to $L, \underline{L}$ are defined by
\begin{align*}
\chi_{a b}=\left\langle D_{e_{a}} L, e_{b}\right\rangle, \quad
\underline{\chi}_{a b}=\left\langle D_{e_{a}} \underline{L}, e_{b}\right\rangle,
\end{align*}
and we write $\zeta=\zeta_{L}$ for the connection $1$-form with respect to $L$:
\begin{align*}
\zeta_{a}=\frac{1}{2}\left\langle D_{e_{a}} L, \underline{L}\right\rangle.
\end{align*}

We say that $\Sigma$ is torsion-free with respect to the null frame $L$ and $\underline{L}$ if $\zeta_L=0$. Notice that this condition depends on the choice of $L$ and $\underline L$.

In codimension-two spacelike submanifolds, the definition of higher order mean curvatures becomes more complex compared to the hypersurface case in the Riemannian setting \cite{reilly1973variational}. This is because there are two null normal directions, and we must combine both of them in the definition, resulting in ``mixed'' Newton tensors and higher order mean curvatures.
\begin{definition}
For any two non-negative integers $r$ and $s$ with $0 \le $ $r+s \le n-1$, the mixed higher order mean curvature $P_{r, s}(\chi, \chi)$ with respect to $L$ and $\underline{L}$ is defined through the following expansion:
$$
\det (\sigma+y \chi+\underline{y}\, \underline {\chi})=\sum_{0 \le r+s \le n-1} \frac{(r+s) !}{r ! s !} y^{r} \underline{y}^{s} P_{r, s}(\chi, \underline{\chi})
$$
where $y$ and $\underline y$ are two real variables and $\sigma$ is the induced metric on $\Sigma$.
Sometimes, it is more convenient to use the normalized mixed higher order mean curvature, defined by
$$
H_{r, s}(\chi, \underline \chi)=\frac{1}{\binom{n-1}{r+s}} P_{r, s}(\chi, \underline \chi).
$$

We also define the mixed Newton tensors $T_{r, s}(\chi, \underline{\chi})$ and $\underline{T}_{r, s}(\chi, \underline{\chi})$ on $\Sigma$, which are symmetric $2$-tensors, by
$$
T_{r, s}^{a b}(\chi, \underline{\chi})=\frac{\partial }{\partial \chi_{a b}}P_{r, s}(\chi, \underline{\chi}) \quad \textrm { and } \quad \underline{T}_{r, s}^{a b}(\chi, \underline{\chi})=\frac{\partial }{\partial \underline{\chi}_{a b}}P_{r, s}(\chi, \underline{\chi}).
$$
\end{definition}

If the choice of $L$ and $\underline L$ is clear, we will write $P_{r, s}$ for $P_{r, s}(\chi, \underline{\chi}), T_{r, s}^{a b}$ for $T_{r, s}^{a b}(\chi, \chi)$, etc.
\begin{lemma} \label{lem: d zeta}
The two-form $d\zeta_L$ is independent of the choice of $L$ and $\underline L$.
\end{lemma}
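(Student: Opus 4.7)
The plan is to show that any admissible change of null frame alters $\zeta_L$ only by an exact one-form, so that its exterior derivative is invariant. First I would observe that because $\Sigma$ is spacelike of codimension two, the normal bundle has Lorentzian signature $(1,1)$, hence at each point admits exactly two null directions, one future outgoing and one future incoming. Consequently, any other pair $(\tilde L,\tilde{\underline L})$ satisfying the conditions $\tilde L$ future outgoing null, $\tilde{\underline L}$ null, and $\langle\tilde L,\tilde{\underline L}\rangle=-2$ must be of the form
\begin{equation*}
\tilde L = f L, \qquad \tilde{\underline L} = \frac{1}{f}\underline L,
\end{equation*}
for some smooth positive function $f$ on $\Sigma$. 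This parametrization is the only ingredient needed from the geometric setup; after this the argument is purely computational.

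Next I would compute the connection one-form in the new frame directly from the definition, using the Leibniz rule for $D$ and the fact that $\langle L,\underline L\rangle=-2$ is constant:
\begin{align*}
\tilde\zeta_a
&= \tfrac12\bigl\langle D_{e_a}(fL),\,\tfrac{1}{f}\underline L\bigr\rangle
= \tfrac12\Bigl[\tfrac{e_a(f)}{f}\langle L,\underline L\rangle + \langle D_{e_a} L,\underline L\rangle\Bigr]\\
&= \zeta_a - \frac{e_a(f)}{f} = \zeta_a - e_a(\log f).
\end{align*}
Thus $\tilde\zeta = \zeta - d(\log f)$ as one-forms on $\Sigma$, and taking the exterior derivative gives $d\tilde\zeta = d\zeta$, which is the claim.

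There is no genuine obstacle here; the only point that deserves a brief mention is why the family $(L,\underline L)\mapsto (fL,f^{-1}\underline L)$ exhausts all admissible changes of frame, which follows from the one-dimensionality of each null sub-bundle of the normal bundle together with the normalization $\langle L,\underline L\rangle=-2$. Note also that as a byproduct the cohomology class $[\zeta]\in H^1_{dR}(\Sigma)$ is frame-independent, which justifies the statement ``$[\zeta]=0\in H^1_{dR}(\Sigma)$'' used in Theorem \ref{intro thm 1.5} without reference to a particular null frame.
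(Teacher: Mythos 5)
Your proof is correct, but it takes a different route from the paper's. You show directly that under the only admissible change of frame $L\to fL$, $\underline L\to f^{-1}\underline L$ (with $f>0$, forced by the one-dimensionality of each null sub-bundle and the normalization $\langle L,\underline L\rangle=-2$), the connection form transforms as $\tilde\zeta=\zeta-d(\log f)$, so $d\zeta$ is unchanged. The paper instead invokes the Ricci identity for the normal connection to write
$(d\zeta_L)_{ab}=\langle\overline R(e_a,e_b)L,\underline L\rangle+\chi_b^{\ c}\underline\chi_{ca}-\chi_a^{\ c}\underline\chi_{cb}$,
and observes that the right-hand side is manifestly invariant under the rescaling (the $f$'s cancel in the curvature term and in the products $\chi\underline\chi$). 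Your argument is more elementary --- it needs only the Leibniz rule and the constancy of $\langle L,\underline L\rangle$ --- and it yields as an immediate byproduct the gauge law $\tilde\zeta=\zeta-d(\log f)$, which is exactly what the paper needs (and states separately right after the lemma) to make sense of the class $[\zeta]\in H^1_{dR}(\Sigma)$ and to prove Proposition \ref{prop torsion}. The paper's route buys an explicit geometric formula for $d\zeta$ in terms of the ambient curvature and the null second fundamental forms, which identifies when the closedness hypothesis $d\zeta=0$ actually holds; your route does not provide that. Both proofs are complete and correct.
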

\begin{proof}
The Ricci identity states that
$$
(d \zeta_L)_{a b}=(\nabla_{e_a}\zeta_L)(e_b)-(\nabla_{e_b}\zeta_L)(e_a)=\left\langle\overline {R}\left(e_{a}, e_{b}\right) L, \underline{L}\right\rangle + \chi_{b}^{c} \underline{\chi}_{c a} - \chi_{a}^{c} \underline{\chi}_{c b}.
$$
As the RHS is invariant under $L\to fL$ and $\underline L\to \frac{1}{f}\underline L$, the result follows.
\end{proof}

By Lemma \ref{lem: d zeta}, it makes sense to talk about $d\zeta$ without specifying the choice of $L$ and $\underline L$. Moreover,
under the transformation $L \rightarrow f L=\widetilde{L}$ and $\underline{L} \rightarrow \frac{1}{f} \underline{L}=\widetilde{\underline L}$, a direct calculation yields $\widetilde{\zeta}_{b}=\zeta_{b}-(\log f)_{b}$. In other words, if $d\zeta=0$, then $[\zeta]$ is well-defined in the deRham cohomology space $H^1_{dR}(\Sigma)$.

\begin{proposition}\label{prop torsion}
Suppose
$d\zeta=0$ and
$[\zeta]=0$ in $H^1_{dR}(\Sigma)$, then we can find two null normals $ L$ and $ {\underline L}$ with $\langle L, \underline L\rangle =-2$, such that $\zeta_{ L}=0$. The choice of $L$ is unique up to a positive constant.
\end{proposition}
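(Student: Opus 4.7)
The plan is to use the transformation law for $\zeta$ under a rescaling of the null frame, which was computed in the paragraph immediately preceding the proposition. Under $L\to fL=\widetilde L$ and $\underline L\to \frac{1}{f}\underline L=\widetilde{\underline L}$ (with $f>0$ so that the time-orientation of $L$ is preserved), we have $\widetilde \zeta = \zeta - d(\log f)$. Thus finding a null frame with vanishing torsion amounts to solving $\zeta = d(\log f)$ for a positive function $f$ on $\Sigma$.

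Starting from an arbitrary initial null frame $L_0,\underline L_0$ with $\langle L_0,\underline L_0\rangle=-2$, let $\zeta_0=\zeta_{L_0}$. The assumption $d\zeta=0$ (which by Lemma \ref{lem: d zeta} is independent of the chosen frame) guarantees that $\zeta_0$ is closed, and the hypothesis $[\zeta]=0$ in $H^1_{dR}(\Sigma)$ gives a smooth function $u\colon\Sigma\to\mathbb R$ with $du=\zeta_0$. Then setting $f=e^u>0$, $L=e^u L_0$ and $\underline L=e^{-u}\underline L_0$, the transformation law yields
\begin{equation*}
\zeta_L = \zeta_0 - d(\log f) = \zeta_0 - du = 0,
\end{equation*}
as required. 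Note that $\langle L,\underline L\rangle=-2$ is preserved by construction.

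For the uniqueness statement, suppose $L'=gL$ for some positive function $g$ on $\Sigma$ and $\underline L'=g^{-1}\underline L$ is another such null frame with $\zeta_{L'}=0$. Applying the transformation law once more gives $0=\zeta_L-d(\log g)=-d(\log g)$, so $\log g$ is locally constant. Since $\Sigma$ is connected (as assumed at the start of Section \ref{sec mink}), $g$ must be a positive constant. No step is really an obstacle here; the only subtlety is ensuring that the rescaling factor can be chosen strictly positive, which is exactly why we write $f=e^u$ rather than directly solving for $f$.
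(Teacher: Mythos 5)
Your proof is correct and takes essentially the same route as the paper: both solve $d(\log f)=\zeta_{L_0}$ using the exactness hypothesis and set $f=e^u$ to guarantee positivity, and both deduce uniqueness up to a positive multiplicative constant from the fact that the potential function is determined up to an additive constant on the connected manifold $\Sigma$. Nothing is missing.
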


\begin{proof}
For any null normal $L$, we have $ \zeta_{fL}(X)=\zeta_{L}(X)-(d\log f)(X)$.
In order for $ \zeta_{fL}=0$, we need to solve $d(\log f)=\zeta_L$.
As $[\zeta_L]=0\in H^1_{dR}(\Sigma)$, there exists a smooth function $h$ such that $dh=\zeta_L$. Therefore we can replace $L$ by $fL $ and $\underline L$ by $\frac{1}{f}\underline L$, where $f=e^h$. As the function $h$ is unique up to an additive constant, $f$ and hence such $L$ is unique up to a multiplicative constant.
\end{proof}

\begin{corollary}
Assume $\Sigma$ has cohomology $H^1 (\Sigma, \mathbb R) =0$ and $d\zeta=0$, then we can find two null normals $ L$ and $ {\underline L}$ with $\langle L, \underline L\rangle =-2$, such that $\zeta_{ L}=0$. The choice of $L$ is unique up to a positive constant.
\end{corollary}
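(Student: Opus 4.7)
The plan is to deduce this corollary as an immediate consequence of Proposition \ref{prop torsion}. The essential point is that the hypothesis $H^1(\Sigma, \mathbb{R}) = 0$ automatically upgrades to the vanishing of the de Rham cohomology class $[\zeta] \in H^1_{dR}(\Sigma)$ that appears in the hypothesis of Proposition \ref{prop torsion}.

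First, I would invoke the de Rham theorem: since $\Sigma$ is a closed smooth manifold, the de Rham cohomology $H^1_{dR}(\Sigma)$ is isomorphic (as a real vector space) to the singular cohomology $H^1(\Sigma, \mathbb{R})$. Hence the assumption $H^1(\Sigma, \mathbb{R}) = 0$ forces $H^1_{dR}(\Sigma) = 0$.

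Next, from Lemma \ref{lem: d zeta} we already know that $d\zeta$ is independent of the choice of null frame, so the assumption $d\zeta = 0$ is intrinsic. Combined with $H^1_{dR}(\Sigma) = 0$, the closed $1$-form $\zeta_L$ (for any choice of $L$) is automatically exact, so in particular $[\zeta] = 0 \in H^1_{dR}(\Sigma)$.

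Both hypotheses of Proposition \ref{prop torsion} are now satisfied, so applying that proposition directly yields the existence of null normals $L, \underline{L}$ with $\langle L, \underline{L}\rangle = -2$ and $\zeta_L = 0$, together with uniqueness of $L$ up to a positive multiplicative constant. There is no real obstacle here; the only thing to be careful about is citing de Rham's theorem (which requires $\Sigma$ to be a smooth manifold, which it is by standing assumption) rather than trying to identify the two cohomologies by hand.
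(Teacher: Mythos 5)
Your proposal is correct and matches the paper's (implicit) reasoning: the corollary is stated without proof precisely because $H^1(\Sigma,\mathbb R)=0$ gives $H^1_{dR}(\Sigma)=0$ via de Rham's theorem, so $[\zeta]=0$ holds automatically and Proposition \ref{prop torsion} applies verbatim. No gaps.
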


In particular, if $\Sigma$ is a topological sphere and $n\ge 3$, then $d\zeta=0$ implies the torsion-free condition for some choice of $L$ and $\underline L$.

In the rest of this section, we focus on spacelike codimension-two submanifolds in a spacetime of constant curvature such as the Minkowski spacetime $\mathbb R^{n, 1}$, the de-Sitter spacetime, or the anti de-Sitter spacetime.

The following is the analogue of the fact that for a hypersurface in a Riemannian space form, the Newton tensors are divergence-free \cite[Lemma 2.1]{kwong2015inequality}. In the codimension-two case, an additional torsion-free condition (which is automatic for a hypersurface) is required.
\begin{lemma}\label{lem L}
Let $ \Sigma$ be a spacelike codimension-two submanifold in a spacetime of constant curvature such that $ d\zeta=0$ and $[\zeta]=0\in H^1_{dR}(\Sigma)$. Then there exists two null normals $L$ and $\underline{L} $ with $\langle L, \underline L\rangle =-2$ such that $T_{r, s}(\chi, \underline \chi)$ and $\underline{T}_{r, s}(\chi, \underline \chi)$ are divergence free for any $(r, s)$, that is
$$
\nabla_{b} T_{r, s}^{a b}=\nabla_{b} \underline{T}_{r, s}^{a b}=0.
$$
\end{lemma}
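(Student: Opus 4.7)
The plan is to proceed in three stages. First, apply Proposition \ref{prop torsion} directly: from the hypotheses $d\zeta = 0$ and $[\zeta] = 0 \in H^1_{dR}(\Sigma)$ we obtain a pair of null normals $L, \underline{L}$ with $\langle L, \underline{L}\rangle = -2$ and $\zeta_L = 0$. We fix this torsion-free frame for the rest of the argument.

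Second, derive the key Codazzi symmetries for $\chi$ and $\underline{\chi}$. The vector-valued second fundamental form decomposes as $II = \tfrac{1}{2}\underline{\chi}\otimes L + \tfrac{1}{2}\chi \otimes \underline{L}$, and with $\zeta_L = 0$ one checks $\nabla^\perp_X L = 0$ and $\nabla^\perp_X \underline{L} = 0$, so the normal connection acts diagonally in this frame. The Codazzi identity
\[
(\bar{R}(e_a, e_b)e_c)^\perp = (\nabla^\perp_{e_a} II)(e_b, e_c) - (\nabla^\perp_{e_b} II)(e_a, e_c)
\]
then decouples into the two scalar equations
\[
(\nabla_a \chi)_{bc} - (\nabla_b \chi)_{ac} = -\langle \bar{R}(e_a, e_b)e_c, L\rangle, \qquad (\nabla_a \underline{\chi})_{bc} - (\nabla_b \underline{\chi})_{ac} = -\langle \bar{R}(e_a, e_b)e_c, \underline{L}\rangle.
\]
In a spacetime of constant curvature $c$, one has $\bar{R}(X,Y)Z = c(\langle Y,Z\rangle X - \langle X,Z\rangle Y)$, so both right-hand sides vanish on tangent vectors because $\langle e_c, L\rangle = \langle e_c, \underline{L}\rangle = 0$. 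Combined with the intrinsic symmetry $\chi_{bc} = \chi_{cb}$ (and likewise for $\underline{\chi}$), this produces full symmetry of $\nabla \chi$ and $\nabla \underline{\chi}$ in all three indices.

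Third, derive divergence-freeness from these symmetries. Writing
\[
P_{r,s}(\chi,\underline{\chi}) = \frac{1}{r!\,s!}\,\delta^{a_1\ldots a_{r+s}}_{b_1\ldots b_{r+s}}\, \chi_{a_1}{}^{b_1}\cdots \chi_{a_r}{}^{b_r}\, \underline{\chi}_{a_{r+1}}{}^{b_{r+1}}\cdots \underline{\chi}_{a_{r+s}}{}^{b_{r+s}}
\]
in generalized Kronecker-delta form, differentiation in $\chi_{ab}$ (respectively $\underline{\chi}_{ab}$) gives explicit expressions for $T_{r,s}^{ab}$ and $\underline{T}_{r,s}^{ab}$. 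Taking $\nabla_b$ produces sums in which each term carries a single factor $\nabla_b \chi_{cd}$ or $\nabla_b \underline{\chi}_{cd}$ contracted, via the totally antisymmetric delta, against the remaining indices; the full symmetry established in the previous stage together with this antisymmetry forces every such sum to vanish, exactly as in the classical hypersurface computation (cf.\ \cite[Lemma 2.1]{kwong2015inequality}).

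The main obstacle is not conceptual but combinatorial: the mixed products of $\chi$ and $\underline{\chi}$ make the delta-contraction bookkeeping in the last stage lengthier than in the single-tensor hypersurface setting, but the underlying cancellation mechanism is identical, and the constant-curvature hypothesis plus the torsion-free gauge are exactly what is needed for it to succeed.
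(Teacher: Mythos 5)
Your proposal is correct and follows essentially the same route as the paper: the paper likewise invokes Proposition \ref{prop torsion} to obtain the torsion-free frame and then simply cites \cite[Lemma 4.2]{wang2017Minkowski} for the divergence-free property, whereas you reconstruct that cited lemma's content directly (Codazzi symmetry of $\nabla\chi$, $\nabla\underline{\chi}$ in the constant-curvature, torsion-free setting, contracted against the antisymmetric generalized Kronecker delta). The argument is sound; you have just written out the computation the paper delegates to the reference.
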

\begin{proof}
By Proposition \ref{prop torsion}, we can find such $L$ and $\underline L$ so that $\zeta_L=0$. The result then follows from \cite[Lemma 4.2]{wang2017Minkowski}.
\end{proof}

We extend \cite[Theorem 4.3]{wang2017Minkowski} by adding an arbitrary weight to the curvature integrals.
\begin{theorem}\label{thm higher mink}
Let $ \Sigma$ be a spacelike codimension-two submanifold in an $(n+1)$-dimensional spacetime of constant curvature such that $ d\zeta=0$ and $[\zeta]=0\in H^1_{dR}(\Sigma)$. Then for $L$ and $\underline L$ given by Proposition \ref{prop torsion}, and for any smooth function $f$ on $\Sigma$, we have
\begin{align}\label{higher weight}
\frac{r(n-r-s)}{r+s} \int_{\Sigma}f P_{r-1, s}\left\langle L, \frac{\partial }{\partial t} \right\rangle d\mu+\frac{1}{2} r \int_{\Sigma} fP_{r, s} Q(L, \underline L)d\mu+\int_{\Sigma} Q(L, T_{r, s}(\nabla f))d\mu=0
\end{align}
and
\begin{align}\label{higher weight2}
\frac{s(n-r-s)}{r+s} \int_{\Sigma}f P_{r, s-1}\left\langle\underline L, \frac{\partial }{\partial t} \right\rangle d\mu-\frac{1}{2}s \int_{\Sigma} fP_{r, s} Q(L, \underline L)d\mu+\int_{\Sigma} Q(\underline L, \underline T_{r, s}(\nabla f))d\mu=0.
\end{align}
\end{theorem}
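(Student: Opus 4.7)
The plan is to mimic the derivation of the unweighted Minkowski formula in \cite[Theorem 4.3]{wang2017Minkowski} but insert the weight $f$ at the outset, producing an extra boundary-type term $Q(L, T_{r,s}(\nabla f))$ after integrating by parts. First I would apply Proposition \ref{prop torsion} to choose $L, \underline L$ with $\langle L, \underline L\rangle = -2$ and $\zeta_L = 0$, which via Lemma \ref{lem L} makes both $T_{r,s}^{ab}$ and $\underline T_{r,s}^{ab}$ divergence-free on $\Sigma$. I would then introduce the one-form $\eta$ on $\Sigma$ whose components in a local orthonormal frame are
\[
\eta_a := f\, T_{r,s}^{ab}\, Q(e_b, L),
\]
together with its twin $\underline\eta_a := f\, \underline T_{r,s}^{ab}\, Q(e_b, \underline L)$.

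The key step is to compute $\mathrm{div}_\Sigma \eta$. The Leibniz rule produces a contribution $(\nabla^a f)\,T_{r,s}^{ab}\,Q(e_b, L) = -Q(L, T_{r,s}(\nabla f))$ from differentiating the weight $f$, while the $f$-weighted remainder $f\,\nabla^a(T_{r,s}^{ab}Q(e_b, L))$ is exactly $f$ times the integrand appearing in the unweighted formula of \cite[Theorem 4.3]{wang2017Minkowski}. This remainder splits further: the derivative on $T_{r,s}$ vanishes by divergence-freeness; the derivative on $Q$ is symmetrised by $T_{r,s}^{ab}$ and collapsed via the CKY equation with $\xi = -n\,\partial/\partial t$ (constant curvature), isolating the $\langle L, \partial_t\rangle$ term with the combinatorial coefficient $\tfrac{r(n-r-s)}{r+s}$; the derivative on $L$ couples to $\chi$, and together with the torsion-free identity $\zeta_L = 0$ and the contraction $T_{r,s}^{ab}\chi_{ab} = (r+1) P_{r+1, s}$ yields $\tfrac{r}{2}\, P_{r,s}\, Q(L, \underline L)$. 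Integrating $\mathrm{div}_\Sigma \eta$ over the closed $\Sigma$ and invoking the divergence theorem gives \eqref{higher weight} after fixing the overall sign via the convention in the definition of $\eta$. The identity \eqref{higher weight2} follows from the same computation applied to $\underline\eta$, the opposite sign of the middle term arising because the antisymmetry $Q(\underline L, L) = -Q(L, \underline L)$ flips the role of the $\chi$-coupling, and $\underline T_{r,s}^{ab}\underline\chi_{ab} = (s+1) P_{r, s+1}$ replaces the corresponding $\chi$-identity.

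The main obstacle is the bookkeeping of the symmetrised CKY identity. One has to verify that
\[
T_{r,s}^{ab}\bigl((D_{e_a} Q)(e_b, L) + (D_{e_b} Q)(e_a, L)\bigr)
\]
collapses cleanly, using $\langle e_a, L\rangle = 0$, the antisymmetry of $Q$, and the combinatorial identities connecting $T_{r,s}^{ab}\sigma_{ab}$, $T_{r,s}^{ab}\chi_{ab}$, and $T_{r,s}^{ab}\underline\chi_{ab}$ with $P_{r\pm 1, s}$ and $P_{r, s\pm 1}$, which are read off from the defining expansion $\det(\sigma + y\chi + \underline y\,\underline\chi) = \sum \tfrac{(r+s)!}{r!s!} y^r \underline y^s P_{r,s}$. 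Once this symmetrisation has been carried out inside $\Sigma$, carrying the weight $f$ through the argument is routine, and the divergence theorem closes the identity.
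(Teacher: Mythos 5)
Your proposal follows essentially the same route as the paper: choose the torsion-free frame via Proposition \ref{prop torsion}, form the weighted one-form $f\,T_{r,s}^{ab}\,Q(L,e_b)$, compute its divergence using Lemma \ref{lem L}, the conformal Killing--Yano equation with $\xi=-n\,\partial/\partial t$, and the contraction identities, and then integrate over the closed $\Sigma$. One slip to correct: with the paper's convention $T_{r,s}^{ab}=\partial P_{r,s}/\partial\chi_{ab}$, Euler's identity for the polynomial $P_{r,s}$ (homogeneous of degree $r$ in $\chi$ and $s$ in $\underline\chi$) gives $T_{r,s}^{ab}\chi_{ab}=r\,P_{r,s}$ and $\underline T_{r,s}^{ab}\underline\chi_{ab}=s\,P_{r,s}$, not $(r+1)P_{r+1,s}$ and $(s+1)P_{r,s+1}$ as you wrote (that is the identity for the other normalization of Newton tensors); your final coefficients $\tfrac{r}{2}P_{r,s}Q(L,\underline L)$ and $\tfrac{r(n-r-s)}{r+s}P_{r-1,s}$ are the correct ones, so only the intermediate identity needs fixing, together with the companion trace identity $T_{r,s}^{ab}\sigma_{ab}=\tfrac{r(n-r-s)}{r+s}P_{r-1,s}$ which you should cite explicitly.
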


\begin{proof}
Write $T_{r, s}$ as $T$.
By the torsion-free condition from Proposition \ref{prop torsion}, constant ambient curvature condition and the divergence-free property of $T$ by Lemma \ref{lem L}, we have \cite[p.267]{wang2017Minkowski}
$$
\nabla_{a}\left[T^{a b} Q\left(L, e_b\right)\right]= T^{a b} \sigma_{a b}\left\langle L, \frac{\partial}{\partial t}\right\rangle
+\frac{1}{2}\left(T^{a b} \chi_{a b}\right) Q(L, \underline{L}).
$$
Therefore
\begin{equation}\label{div}
\nabla_{a}\left[fT^{a b} Q\left(L, e_b\right)\right]
=f T^{a b} \sigma_{a b}\left\langle L, \frac{\partial}{\partial t}\right\rangle
+\frac{1}{2}f\left(T^{a b} \chi_{a b}\right) Q(L, \underline{L})+Q(L, T(\nabla f)).
\end{equation}

Integrating \eqref{div} on $\Sigma$ and applying the divergence theorem together with the fact that $ \chi_{a b} T_{r, s}^{a b}=r P_{r, s}$ and $ \sigma_{a b} T_{r, s}^{a b}=\frac{r(n-(r+s))}{r+s} P_{r-1, s}$ (\cite[A.3, A.5]{wang2017Minkowski}),
we can get the result.

The second formula is derived similarly by considering the identity
$\int_{\Sigma} \nabla_{a}\left[\underline{T}^{a b} Q\left(\underline{L}, e_b\right)\right] d \mu=0$ and using $\nabla_a\left[Q\left(\underline{L}, e_b\right)\right]=\left(D_a Q\right)\left(\underline{L}, e_b\right)+\underline{\chi}_a^c Q_{c b}-\frac{1}{2} \underline{\chi}_{a b} Q(L, \underline{L})$.
\end{proof}

Recall the definition of the positive cone $\Gamma_{k}$. For $1 \le k \le n-1, \Gamma_{k}$ is a convex cone in $\mathbb R^{n-1}$ such that $\Gamma_{k}=\{\lambda \in \mathbb R^{n-1}: \sigma_{1}(\lambda)>0, \cdots, \sigma_{k}(\lambda)>0\}$ where
$$
\sigma_{k}(\lambda)=\sum_{i_{1}<\cdots<i_{k}} \lambda_{i_{1}} \cdots \lambda_{i_{k}}
$$
is the $k$-th elementary symmetric function.

We have the following Newton-Maclaurin inequality, see \cite{wang2017Minkowski} A.9.
\begin{lemma}\label{lem newton}
If $\chi$ and $\underline{\chi}$ are both in the $\Gamma_{r+s-1}$ cone, then we have
\begin{equation}\label{newton ineq}
H_{r-1, s}(\chi, \underline \chi)^2 \ge H_{r, s}(\chi, \underline \chi) H_{r-2, s}(\chi, \underline \chi).
\end{equation}
\end{lemma}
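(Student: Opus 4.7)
The strategy is to identify $H_{r,s}(\chi,\underline\chi)$ with the mixed discriminant of $n-1$ symmetric matrices, and then invoke the Alexandrov--Fenchel inequality.

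First I would expand the defining identity $\det(\sigma+y\chi+\underline y\,\underline\chi)=\sum_{r+s\le n-1}\tfrac{(r+s)!}{r!s!}y^r\underline y^s P_{r,s}$ and compare with the multilinear expansion of the determinant, obtaining the clean identification
\[
H_{r,s}(\chi,\underline\chi)=D\bigl(\underbrace{\chi,\ldots,\chi}_{r},\underbrace{\underline\chi,\ldots,\underline\chi}_{s},\underbrace{\sigma,\ldots,\sigma}_{n-1-r-s}\bigr),
\]
where $D$ is the symmetric $(n-1)$-linear mixed discriminant on $(n-1)\times(n-1)$ symmetric matrices. The Alexandrov--Fenchel inequality for mixed discriminants asserts
\[
D(A,B,C_1,\ldots,C_{n-3})^2\ge D(A,A,C_1,\ldots,C_{n-3})\,D(B,B,C_1,\ldots,C_{n-3}).
\]
Applying it with $A=\chi$, $B=\sigma$, and the $C_i$'s consisting of $r-2$ copies of $\chi$, $s$ copies of $\underline\chi$, and $n-r-s-1$ copies of $\sigma$ (so the total number of arguments is $n-3$), the left-hand side equals $H_{r-1,s}^2$ and the right-hand side equals $H_{r,s}\,H_{r-2,s}$, which is exactly \eqref{newton ineq}.

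The main obstacle is that Alexandrov--Fenchel is classically proved only for positive semi-definite matrices, whereas the hypothesis provides only $\chi,\underline\chi\in\Gamma_{r+s-1}$, a strictly weaker condition. To close this gap I would invoke G\aa rding's theory of hyperbolic polynomials: the determinant $X\mapsto\det(X)$ is hyperbolic along the identity, and its positivity cone admits an enlargement along the $\chi$- and $\underline\chi$-axes to include matrices whose elementary symmetric functions of the eigenvalues are positive up to order $r+s-1$. On this enlarged G\aa rding cone the same mixed-discriminant inequality remains valid, which is precisely what is required. A more elementary, direct route -- the one pursued in \cite[A.9]{wang2017Minkowski} -- is to write $P_{r,s}$ via the generalized Kronecker delta, diagonalize $\chi$ in a suitable frame, and reduce the mixed inequality to the scalar Newton--Maclaurin inequality for elementary symmetric polynomials applied to appropriately weighted combinations of eigenvalues; the $\Gamma_{r+s-1}$ condition on $\chi$ and $\underline\chi$ is then exactly what guarantees that the scalar reduction stays inside the classical domain of validity.
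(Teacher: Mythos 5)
The paper does not actually prove this lemma: it is quoted verbatim from \cite[A.9]{wang2017Minkowski}, so there is no internal proof to compare your argument against. Judged on its own terms, your proposal has the right skeleton in its first half --- the identification $H_{r,s}(\chi,\underline\chi)=D(\chi,\ldots,\chi,\underline\chi,\ldots,\underline\chi,\sigma,\ldots,\sigma)$ with the normalized mixed discriminant is correct (the multinomial coefficient $\tfrac{(n-1)!}{r!\,s!\,(n-1-r-s)!}$ matches $\binom{n-1}{r+s}\tfrac{(r+s)!}{r!s!}$), and the Alexandrov--Fenchel inequality for mixed discriminants does specialize to \eqref{newton ineq} when all arguments are positive definite. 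But the entire content of the lemma is the passage from positive definiteness to the weaker hypothesis $\chi,\underline\chi\in\Gamma_{r+s-1}$, and this is exactly the step you assert rather than prove. The claim that the G\aa rding cone of $\det$ ``admits an enlargement along the $\chi$- and $\underline\chi$-axes'' to include $\Gamma_{r+s-1}$ matrices is not a standard fact: in G\aa rding's theory the polarized inequality $D(A,B,C_1,\ldots)^2\ge D(A,A,C_1,\ldots)D(B,B,C_1,\ldots)$ requires $B,C_1,\ldots$ to lie in the hyperbolicity cone of the polynomial being polarized, and to reach $\Gamma_{r+s-1}$ one must instead polarize a lower-order hyperbolic polynomial (a $\sigma_k$-type truncation) and verify that the resulting two-parameter mixed quantities still coincide with $P_{r,s}(\chi,\underline\chi)$. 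That verification is nontrivial and is where a referee would stop you.

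Your proposed fallback --- diagonalize $\chi$ and reduce to the scalar Newton--Maclaurin inequality --- also does not go through as stated, because $\chi$ and $\underline\chi$ need not commute: $P_{r,s}(\chi,\underline\chi)$ is \emph{not} a function of the two eigenvalue lists separately, so there is no frame in which the mixed inequality becomes a statement about elementary symmetric polynomials of real numbers. (This is precisely why \cite[A.9]{wang2017Minkowski} is a lemma rather than an immediate corollary of Newton's inequalities.) It is worth noting that in the only places this paper invokes Lemma \ref{lem newton} --- the chains of inequalities in Theorems \ref{thm higher order alex} and \ref{thm higher order alex'} --- one has $s=0$ or $r=0$, where the statement degenerates to the classical single-matrix Newton inequality $H_{k-1}^2\ge H_kH_{k-2}$ (valid for all real eigenvalues, with the $\Gamma_r$ condition needed only to keep the quotients $H_{k-1}/H_k$ positive for the chaining). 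So for the purposes of this paper your plan could be salvaged by restricting to that case; for the general mixed statement you would need to either supply the hyperbolicity argument in detail or follow the combinatorial proof in the cited appendix.
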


The following result generalizes \cite[Theorem 5.1]{wang2017Minkowski}.
\begin{theorem}\label{thm higher order alex}
Let $\Sigma$ be a past incoming null embedded, closed spacelike codimension-two submanifold in an $(n+1)$ dimensional spacetime of constant curvature
such that $d \zeta=0$ and $[\zeta]=0 \in H_{d R}^1(\Sigma)$. Let $L$ and $\underline{L}$ be the null frame given by Proposition \ref{prop torsion}.
Assume that the second fundamental form $\chi \in \Gamma_{r}$.
If
\begin{align*}
\int_{\Sigma} \frac{1}{{P_{r, 0}}^2}Q\left(T_{r, 0}(\nabla P_{r, 0}), L\right) d \mu\le 0,
\end{align*}
then $\Sigma$ lies in an outgoing shear-free null hypersurface.
\end{theorem}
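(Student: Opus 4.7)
The plan is to follow the strategy of Theorem \ref{alex}: plug a carefully chosen weight into the higher-order weighted Minkowski formula, reduce the resulting expression to the $r=1$ quantity via Newton--Maclaurin, and then invoke the rigidity case of the spacetime Heintze--Karcher inequality \ref{HK ineq'} to close the argument.

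First I would specialise Theorem \ref{thm higher mink} to $s=0$ with the weight $f=1/P_{r,0}$; this is well defined and positive since $\chi\in\Gamma_r$. Using $\nabla f=-\nabla P_{r,0}/P_{r,0}^2$ together with the antisymmetry of $Q$, the gradient term in \eqref{higher weight} becomes exactly $\int_\Sigma P_{r,0}^{-2}\,Q(T_{r,0}(\nabla P_{r,0}),L)\,d\mu$, which is nonpositive by hypothesis. Discarding that term leaves
\[ (n-r)\int_\Sigma \frac{P_{r-1,0}}{P_{r,0}}\langle L,\partial_t\rangle\,d\mu+\frac{r}{2}\int_\Sigma Q(L,\underline L)\,d\mu\ge 0. \]

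Next I would apply Lemma \ref{lem newton} iteratively at $s=0$: since $\chi\in\Gamma_r\subset\Gamma_k$ for every $k\le r$, the chain $H_{r-1,0}/H_{r,0}\ge H_{r-2,0}/H_{r-1,0}\ge\cdots\ge H_{0,0}/H_{1,0}=1/H_{1,0}$ holds, and converting through $H_{k,0}=P_{k,0}/\binom{n-1}{k}$ yields the pointwise bound $(n-r)P_{r-1,0}/P_{r,0}\ge r(n-1)/P_{1,0}$. The Lorentzian reverse Cauchy--Schwarz inequality applied to the future null $L$ and the future timelike $\partial_t$ gives $\langle L,\partial_t\rangle\le 0$, so multiplying pointwise flips the sign and integration produces
\[ (n-r)\int_\Sigma \frac{P_{r-1,0}}{P_{r,0}}\langle L,\partial_t\rangle\,d\mu\le r(n-1)\int_\Sigma \frac{\langle L,\partial_t\rangle}{P_{1,0}}\,d\mu. \]
Combining with the previous display, dividing by $r$, and rewriting $P_{1,0}=-\langle\vec H,L\rangle>0$ (valid since $\chi\in\Gamma_1$), I obtain the reverse of the Heintze--Karcher inequality \eqref{3.17}. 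But Theorem \ref{HK ineq'} supplies \eqref{3.17} itself under the past incoming null embedded hypothesis, so equality is forced and its rigidity clause concludes that $\Sigma$ lies in an outgoing shear-free null hypersurface.

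The main obstacle is bookkeeping: correctly translating binomial factors between $H_{k,0}$ and $P_{k,0}$, tracking signs coming from $Q$'s antisymmetry and from $\langle L,\partial_t\rangle\le 0$, and verifying that every denominator stays strictly positive along the way. A minor subtlety is that Lemma \ref{lem newton} is phrased for mixed tensors $(\chi,\underline\chi)$, but at $s=0$ only $\chi$ enters the ratios, so no hypothesis on $\underline\chi$ is required; in the degenerate case $r=1$ the Newton--Maclaurin step becomes trivial and the proof collapses to a direct application of the weighted Minkowski formula against the Heintze--Karcher inequality, precisely mirroring the $r=1$ argument in Theorem \ref{alex}.
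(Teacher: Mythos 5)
Your proposal is correct and follows essentially the same route as the paper: substitute the weight $f=1/P_{r,0}$ (the paper uses the equivalent $1/H_{r,0}$) into the $s=0$ case of the weighted Minkowski formula \eqref{higher weight}, discard the nonpositive gradient term via the hypothesis, iterate the Newton--Maclaurin inequality down to $P_{1,0}=-\langle\vec H,L\rangle$ using $\langle L,\partial_t\rangle<0$, and force equality in the Heintze--Karcher inequality \eqref{3.17} to invoke its rigidity clause. The sign and binomial bookkeeping you flag all checks out, and your observation that no hypothesis on $\underline\chi$ is needed at $s=0$ matches the paper's use of the lemma.
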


\begin{proof}
We can rewrite \eqref{higher weight} as
\begin{equation}\label{higher weight'}
\int_\Sigma f H_{r-1, s}\left\langle L, \frac{\partial}{\partial t}\right\rangle d\mu+\frac{1}{2} \int_\Sigma f H_{r, s} Q(L, \underline L)d\mu
-\frac{r+s}{\binom{n-1}{r-1+s} r(n-r-s)}\int_{\Sigma} Q\left(T_{r, s}(\nabla f), L \right) d \mu=0.
\end{equation}
We apply \eqref{higher weight'} with $f=\frac{1}{H_{r, 0}}$ and $s=0$ to get
\begin{align*}
\frac{1}{2} \int_{\Sigma} Q(L, \underline{L}) d \mu
=& -\int \frac{H_{r-1, 0}}{H_{r, 0}}\left\langle L, \frac{\partial}{\partial t}\right\rangle d\mu-c(n, r) \int_{\Sigma} \frac{1}{H_{r, 0}^2} Q\left(T_{r, 0}\left(\nabla H_{r, 0}\right), L\right) d \mu\\
\ge& -\int \frac{H_{r-1, 0}}{H_{r, 0}}\left\langle L, \frac{\partial}{\partial t}\right\rangle d\mu,
\end{align*}
where $c(n, r)>0$.
Note that $\left\langle L, \frac{\partial }{\partial t}\right\rangle <0$ and $P_{1, 0}=\mathrm{tr}(\chi)=\sigma^{ab}\langle -D_a e_b, L\rangle =-\langle \vec H, L\rangle $. By repeatedly applying the Newton-Maclaurin inequality \eqref{newton ineq}, we then get
\begin{align*}
\frac{1}{2} \int_{\Sigma} Q(L, \underline{L}) d \mu\ge& -\int_{\Sigma} \frac{1}{H_{1, 0}}\left\langle L, \frac{\partial}{\partial t} \right\rangle d\mu
= (n-1) \int_{\Sigma} \frac{\left\langle L, \frac{\partial}{\partial t} \right\rangle}{\langle\vec{H}, L\rangle} d \mu.
\end{align*}
Comparing this with the spacetime Heintze-Karcher inequality \eqref{3.17}, we see that the equality is achieved.
We conclude that $\Sigma$ lies in an outgoing shear-free null hypersurface.
\end{proof}

\begin{theorem}\label{thm higher order alex'}
Let $\Sigma$ be a future incoming null embedded, closed spacelike codimension-two submanifold in an $(n+1)$ dimensional spacetime of constant curvature
such that $d \zeta=0$ and $[\zeta]=0 \in H_{d R}^1(\Sigma)$. Let $L$ and $\underline{L}$ be the null frame given by Proposition \ref{prop torsion}.
Assume that the second fundamental form $-\underline{\chi} \in \Gamma_{s}$. If $$\int_{\Sigma}\frac{1}{ {P_{0, s}}^{2}} Q\left(T_{0, s}(\nabla P_{0, s}), \underline L\right) d \mu\le 0, $$
then $\Sigma$ lies in an incoming shear-free null hypersurface.
\end{theorem}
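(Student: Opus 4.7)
The strategy mirrors the proof of Theorem \ref{thm higher order alex}, exchanging the roles of $L$ and $\underline L$: I would use the weighted Minkowski formula \eqref{higher weight2} in place of \eqref{higher weight} and the incoming Heintze-Karcher inequality \eqref{3.14} in place of \eqref{3.17}. Substituting $r=0$ and $f = 1/P_{0,s}$ into \eqref{higher weight2}, and simplifying via $\nabla(1/P_{0,s})=-\nabla P_{0,s}/P_{0,s}^2$ together with the antisymmetry of $Q$, yields the identity
\begin{align*}
\frac{s}{2}\int_\Sigma Q(L,\underline L)\,d\mu = (n-s)\int_\Sigma \frac{P_{0,s-1}}{P_{0,s}}\left\langle\underline L,\tfrac{\partial}{\partial t}\right\rangle d\mu + \int_\Sigma \frac{1}{P_{0,s}^2}Q(\underline T_{0,s}(\nabla P_{0,s}),\underline L)\,d\mu.
\end{align*}
The integral hypothesis (controlling the sign of the last term) combined with $P_{0,k}=\binom{n-1}{k}H_{0,k}$ then gives $\tfrac{1}{2}\int_\Sigma Q(L,\underline L)\,d\mu \ge \int_\Sigma \tfrac{H_{0,s-1}}{H_{0,s}}\langle\underline L,\partial/\partial t\rangle\,d\mu$.

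Next I would apply Lemma \ref{lem newton} to the eigenvalues of $-\underline\chi\in\Gamma_s$. Setting $\tilde H_k := H_{0,k}(\chi,-\underline\chi) = (-1)^k H_{0,k}$, Newton-Maclaurin gives $\tilde H_{s-1}/\tilde H_s \ge 1/\tilde H_1$, which translates to $H_{0,s-1}/H_{0,s}\le 1/H_{0,1}$ (both sides negative). Since $\langle\underline L,\partial/\partial t\rangle<0$, multiplying flips the inequality; integrating and using $P_{0,1}=-\langle\vec H,\underline L\rangle$ yields
\begin{align*}
\int_\Sigma \frac{H_{0,s-1}}{H_{0,s}}\left\langle\underline L,\tfrac{\partial}{\partial t}\right\rangle d\mu \ge \int_\Sigma \frac{1}{H_{0,1}}\left\langle\underline L,\tfrac{\partial}{\partial t}\right\rangle d\mu = -(n-1)\int_\Sigma \frac{\langle\partial/\partial t,\underline L\rangle}{\langle\vec H,\underline L\rangle}\,d\mu.
\end{align*}
The condition $-\underline\chi\in\Gamma_1$ guarantees $\langle\vec H,\underline L\rangle>0$, so \eqref{3.14} is applicable and gives $-(n-1)\int_\Sigma \tfrac{\langle\partial/\partial t,\underline L\rangle}{\langle\vec H,\underline L\rangle}d\mu \ge \tfrac{1}{2}\int_\Sigma Q(L,\underline L)\,d\mu$. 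Chaining the three inequalities forces all to be equalities, and the rigidity clause of Theorem \ref{HK ineq} then places $\Sigma$ in an incoming shear-free null hypersurface.

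The main obstacle is careful sign bookkeeping. Because the positivity is assumed for $-\underline\chi$ rather than $\underline\chi$, the quantities $H_{0,k}$ alternate in sign; Newton-Maclaurin remains valid but must be transported through these signs, and the $-\tfrac{s}{2}$ coefficient in \eqref{higher weight2} (versus the $+\tfrac{r}{2}$ in \eqref{higher weight}) alters how the integral hypothesis feeds into the chain. One should also read the $T_{0,s}$ appearing in the statement as the Newton tensor $\underline T_{0,s}$ of $\underline\chi$, since $\partial P_{0,s}/\partial\chi_{ab}\equiv 0$ would otherwise render the condition vacuous.
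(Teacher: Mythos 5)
Your proposal follows the paper's own proof essentially verbatim: substitute $f=1/H_{0,s}$ (equivalently $1/P_{0,s}$) into \eqref{higher weight2} with $r=0$, use the integral hypothesis to control the gradient term, convert $H_{0,k}(\chi,\underline\chi)$ to $H_{0,k}(\chi,-\underline\chi)$ and apply Newton--Maclaurin repeatedly, and close the chain against the rigidity case of \eqref{3.14}. Your reading of $T_{0,s}$ as $\underline T_{0,s}$ and your explicit sign bookkeeping for the alternating quantities $H_{0,k}$ are precisely the conventions the paper uses implicitly, so the two arguments coincide.
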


\begin{proof}
The proof is similar to Theorem \ref{thm higher order alex}.
Using \eqref{higher weight2} with $f=\frac{1}{H_{0, s}(\chi, \underline \chi)}$ and $r=0$, we get
\begin{align*}
\frac{1}{2} \int_{\Sigma} Q(L, \underline{L}) d \mu
=& \int_\Sigma\frac{H_{0, s-1 }(\chi, \underline \chi)}{H_{ 0, s}(\chi, \underline \chi)}\left\langle \underline L, \frac{\partial}{\partial t}\right\rangle d\mu+c(n, s)\int_{\Sigma}\frac{1}{H_{0, s}(\chi, \underline \chi)^2}Q(T_{r, s}(\nabla H_{0, s}(\chi, \underline \chi)), \underline L)d\mu\\
\ge& \int_{\Sigma} \frac{H_{0, s-1}(\chi, \underline{\chi})}{H_{0, s}(\chi, \underline{\chi})}\left\langle\underline{L}, \frac{\partial}{\partial t}\right\rangle d \mu\\
=& -\int_{\Sigma} \frac{H_{0, s-1}(\chi, -\underline{\chi})}{H_{0, s}(\chi, -\underline{\chi})}\left\langle\underline{L}, \frac{\partial}{\partial t}\right\rangle d \mu,
\end{align*}
where $c(n, s)>0$.
Note that $\left\langle \underline L, \frac{\partial }{\partial t}\right\rangle <0$ and $P_{1, 0}(\chi, -\underline \chi)=\mathrm{tr}(-\underline \chi)=\sigma^{ab}\langle D_a e_b, \underline L\rangle =\langle \vec H, \underline L\rangle $. Again by repeatedly applying the Newton-Maclaurin inequality, we get
\begin{align*}
\frac{1}{2} \int_{\Sigma} Q(L, \underline{L}) d \mu\ge -\int_{\Sigma} \frac{1}{H_{0, 1}(\chi, -\underline \chi)}\left\langle \underline L, \frac{\partial}{\partial t} \right\rangle d\mu
= -(n-1) \int_{\Sigma} \frac{\left\langle \underline L, \frac{\partial}{\partial t} \right\rangle}{\langle\vec{H}, \underline L\rangle} d \mu.
\end{align*}
Comparing this with the spacetime Heintze-Karcher inequality \eqref{3.14}, we see that the equality is achieved.
We conclude that $\Sigma$ lies in an incoming shear-free null hypersurface.
\end{proof}

In the rest of this section, we prove a rigidity result for submanifolds with a condition that involves $P_{r, s}$ for $r>0, s>0$.

We have the following algebraic lemma from \cite{wang2017Minkowski}.
\begin{lemma}[{\cite[Lemma 5.2]{wang2017Minkowski}}]\label{5.2}
Suppose $\chi \in \Gamma_{r+s}$ and $-\underline{\chi} \in \Gamma_{r+s}$. If
$\underline{\chi}_{a b}\left(\underline{T}_{0, s}\right)^{b c} \chi_c{ }^a \ge H_{0, s} H_{1, 0}$,
then we have
$$
\frac{H_{r-1, s}(\chi, \underline{\chi})}{H_{r, s}(\chi, \underline{\chi})} \ge \frac{n-1}{\mathrm{tr} \chi}.
$$
The equality holds if and only if $\chi$ is a multiple of the identity.
\end{lemma}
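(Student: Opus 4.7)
The natural approach is to reduce the inequality to a base case via the mixed Newton--Maclaurin inequality (Lemma \ref{lem newton}), and then to use the hypothesis together with an algebraic identity expressing $\underline{\chi}_{ab}(\underline{T}_{0,s})^{bc}\chi_c^{\;a}$ in terms of $P_{0,s}$, $P_{1,s}$, and $\mathrm{tr}\,\chi$.

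First, since both $\chi$ and $-\underline{\chi}$ lie in $\Gamma_{r+s} \subset \Gamma_{r+s-1}$, the mixed Newton--Maclaurin inequality \eqref{newton ineq} applied successively yields the chain
\[
\frac{H_{r-1,s}}{H_{r,s}} \;\ge\; \frac{H_{r-2,s}}{H_{r-1,s}} \;\ge\; \cdots \;\ge\; \frac{H_{0,s}}{H_{1,s}},
\]
reducing the task to the base case $\frac{H_{0,s}}{H_{1,s}} \ge \frac{n-1}{\mathrm{tr}\,\chi}$, equivalently $H_{0,s}\,\mathrm{tr}\,\chi \ge (n-1)H_{1,s}$.

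For the base case, I would derive the algebraic identity
\[
\underline{\chi}_{ab}\,(\underline{T}_{0,s})^{bc}\,\chi_c^{\;a} \;=\; P_{0,s}\,\mathrm{tr}\,\chi \;-\; (s+1)\,P_{1,s}.
\]
This identity follows from the cofactor expansion $\mathrm{adj}(\sigma + \underline{y}\,\underline{\chi}) = \sum_{k\ge 0}\underline{y}^k\,\underline{T}_{0,k+1}$, which itself comes from the recursion $\underline{T}_{0,k+1} = P_{0,k}\,\sigma - \underline{\chi}\,\underline{T}_{0,k}$, a direct consequence of $(\sigma + \underline{y}\underline{\chi})\,\mathrm{adj}(\sigma+\underline{y}\underline{\chi}) = \det(\sigma + \underline{y}\underline{\chi})\,I$. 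One then compares the coefficient of $\underline{y}^s$ in $\mathrm{tr}\bigl(\mathrm{adj}(\sigma+\underline{y}\underline{\chi})\,\chi\bigr)$ with the coefficient of $y\,\underline{y}^s$ in $\det(\sigma + y\chi + \underline{y}\underline{\chi})$, which by Jacobi's formula equals $(s+1)P_{1,s}$. Substituting this identity into the hypothesis and then normalizing via $P_{r,s} = \binom{n-1}{r+s}H_{r,s}$ and $\mathrm{tr}\,\chi = (n-1)H_{1,0}$ yields the base inequality after algebraic rearrangement.

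For the equality case, equality in the chained Newton--Maclaurin inequality forces $H_{k-1,s}^2 = H_{k,s}H_{k-2,s}$ at each step, and the standard equality analysis then implies that all eigenvalues of $\chi$ coincide, so that $\chi = \kappa\,\sigma$ for some scalar $\kappa$; conversely, $\chi = \kappa\,\sigma$ makes both sides equal at once.

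The main technical obstacle lies in the sign and normalization bookkeeping in the base case, since the cone assumption is $-\underline{\chi} \in \Gamma_{r+s}$ rather than $\underline{\chi}$ itself, so that quantities such as $H_{0,s}$ and the entries of $\underline{T}_{0,s}$ carry factors of $(-1)^s$ that must be tracked carefully through the algebraic manipulation connecting the hypothesis to the base inequality.
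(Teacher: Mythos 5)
First, a remark on the comparison itself: the paper does not prove this lemma --- it is imported verbatim from \cite[Lemma 5.2]{wang2017Minkowski} --- so there is no in-paper argument to measure you against. Your two-step outline (chain the mixed Newton--Maclaurin inequality \eqref{newton ineq} down to $r=1$, then settle the base case by a polarization identity) is the right strategy and matches the cited source, and your identity $\underline{\chi}_{ab}(\underline{T}_{0,s})^{bc}\chi_c{}^{\,a}=P_{0,s}\,\mathrm{tr}\,\chi-(s+1)P_{1,s}$ is correct: it follows, exactly as you indicate, from $\underline{T}_{0,s+1}=P_{0,s}\,\sigma-\underline{T}_{0,s}\,\underline{\chi}$ together with $(s+1)P_{1,s}=\underline{T}_{0,s+1}^{ab}\chi_{ab}$ (check: for $s=1$ it reduces to $\mathrm{tr}(\chi\underline{\chi})=\mathrm{tr}\chi\,\mathrm{tr}\underline{\chi}-2P_{1,1}$).

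The genuine gap is that the step you dismiss as ``algebraic rearrangement'' is where the proof actually lives, and it does not close as written. Carrying it out: the base inequality $H_{0,s}H_{1,0}\ge H_{1,s}$ reads $P_{1,s}\le\frac{n-1-s}{(s+1)(n-1)}P_{0,s}\,\mathrm{tr}\chi$ after inserting $P_{k,s}=\binom{n-1}{k+s}H_{k,s}$ and $\mathrm{tr}\chi=(n-1)H_{1,0}$; substituting your identity, this is equivalent to $\underline{\chi}_{ab}(\underline{T}_{0,s})^{bc}\chi_c{}^{\,a}\ge \frac{s}{n-1}P_{0,s}\,\mathrm{tr}\chi=s\binom{n-1}{s}H_{0,s}H_{1,0}=\bigl(\underline{\chi}_{ab}\underline{T}_{0,s}^{ab}\bigr)H_{1,0}$. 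This differs from the stated hypothesis $\underline{\chi}_{ab}(\underline{T}_{0,s})^{bc}\chi_c{}^{\,a}\ge H_{0,s}H_{1,0}$ by the factor $s\binom{n-1}{s}$, and whether the stated bound implies the needed one depends on the sign of $H_{0,s}H_{1,0}$, hence on the parity of $s$ (the sign of $H_{0,s}$ is $(-1)^s$ under the hypothesis $-\underline{\chi}\in\Gamma_{r+s}$): for odd $s$ the stated hypothesis is the stronger one and the argument closes, for even $s$ it is the weaker one and it does not. So the ``normalization bookkeeping'' you defer is not routine --- it is the decisive point, and you must either pin down the intended normalization of the hypothesis or supply a separate argument in the even case. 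Relatedly, the reduction $H_{0,s}/H_{1,s}\ge 1/H_{1,0}\iff H_{0,s}H_{1,0}\ge H_{1,s}$ itself flips direction with the sign of $H_{1,s}$, so the parity of $s$ enters twice.

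A secondary gap is the equality case. Equality in the conclusion forces equality at every link of the Newton--Maclaurin chain \emph{and} in the base-case step (hence in the hypothesis); you only discuss the former. Moreover, ``the standard equality analysis'' for \eqref{newton ineq} is not the classical single-matrix Maclaurin rigidity: the mixed inequality involves two second fundamental forms, and its equality case needs its own justification before you may conclude that the eigenvalues of $\chi$ coincide. The converse direction does check out: if $\chi=\kappa\sigma$ then $\det((1+y\kappa)\sigma+\underline{y}\,\underline{\chi})=\sum_s(1+y\kappa)^{n-1-s}\underline{y}^sP_{0,s}$ gives $H_{r-1,s}/H_{r,s}=1/\kappa=(n-1)/\mathrm{tr}\chi$, but this computation should be stated rather than asserted.
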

\begin{theorem}
Let $\Sigma$ be a past incoming null embedded closed embedded spacelike codimension-two submanifold in a spacetime $V^{n+1}$ of constant curvature,
such that $d \zeta=0$ and $[\zeta]=0 \in H_{d R}^1(\Sigma)$. Let $L$ and $\underline{L}$ be the null frame given by Proposition \ref{prop torsion}.
Let $r, s>0$ and suppose the second fundamental forms $\chi \in \Gamma_{r+s}$ and $-\underline{\chi} \in \Gamma_{r+s}$ satisfy $\underline{\chi}_{a b}\left(\underline{T}_{0, s}\right)^{b c} \chi_c{ }^a \ge H_{0, s} H_{1, 0}$ and
\begin{equation}\label{cond rs}
\int_{\Sigma} \frac{1}{H_{r, s}^2} Q\left(T_{r, s}\left(\nabla H_{r, s}\right), L\right) d \mu\le0.
\end{equation}
Then $\Sigma$ is a sphere of symmetry.
\end{theorem}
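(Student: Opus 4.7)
The argument closely follows Theorem \ref{thm higher order alex}, the main new ingredient being Lemma \ref{5.2}, which plays the role that the Newton--Maclaurin inequality plays in the $s=0$ case. I would first apply the weighted spacetime Minkowski identity \eqref{higher weight'} of Theorem \ref{thm higher mink} with the weight $f = 1/H_{r,s}$. Since $\nabla f = -\nabla H_{r,s}/H_{r,s}^2$, after rearrangement this reads
\[
\frac{1}{2}\int_\Sigma Q(L, \underline{L})\,d\mu + \int_\Sigma \frac{H_{r-1, s}}{H_{r, s}} \Big\langle L, \frac{\partial}{\partial t}\Big\rangle\, d\mu = c(n, r, s)\int_\Sigma \frac{1}{H_{r,s}^2}\,Q\big(T_{r,s}(\nabla H_{r,s}), L\big)\,d\mu,
\]
where $c(n, r, s) > 0$. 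The hypothesis \eqref{cond rs} makes the right-hand side non-positive, so
\[
\frac{1}{2}\int_\Sigma Q(L, \underline{L})\,d\mu \ge -\int_\Sigma \frac{H_{r-1, s}}{H_{r, s}} \Big\langle L, \frac{\partial}{\partial t}\Big\rangle\, d\mu.
\]

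Using $\chi, -\underline\chi \in \Gamma_{r+s}$ together with the pointwise bound $\underline{\chi}_{ab}(\underline{T}_{0, s})^{bc}\chi_c{}^a \ge H_{0, s} H_{1, 0}$, Lemma \ref{5.2} yields $H_{r-1,s}/H_{r,s} \ge (n-1)/\mathrm{tr}\,\chi = -(n-1)/\langle \vec H, L\rangle$. Since $\langle L, \partial/\partial t\rangle < 0$ (both vectors being future-directed), combining this bound with the previous display produces
\[
\frac{1}{2}\int_\Sigma Q(L, \underline{L})\,d\mu \ge (n-1)\int_\Sigma \frac{\langle L, \partial/\partial t\rangle}{\langle \vec H, L\rangle}\, d\mu.
\]
This is precisely the reverse of the spacetime Heintze--Karcher inequality \eqref{3.17}, which is available since $\Sigma$ is past incoming null embedded (Theorem \ref{HK ineq'}). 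Hence equality must hold in both \eqref{3.17} and in the Newton--Maclaurin-type inequality of Lemma \ref{5.2}: the former places $\Sigma$ in an outgoing shear-free null hypersurface, and the latter forces $\chi$ to be a scalar multiple of the induced metric.

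The last step is to upgrade this rigidity to the statement that $\Sigma$ is a sphere of symmetry, and this is the step I expect to require the most care. In a constant curvature spacetime the outgoing shear-free null hypersurfaces coincide with the null hypersurfaces of symmetry; this follows from the Brendle-type rigidity \cite{brendle2013constant} for closed umbilical hypersurfaces in the totally geodesic time slices (covering the Minkowski, de Sitter, and anti-deSitter cases, as noted in Example \ref{example1}(2)). Combining this classification with the past incoming null embeddedness of $\Sigma$ and the umbilicity of $\chi$, exactly as in the deduction of Theorem \ref{thm Schwarzschild} from Theorem \ref{alex}, one identifies $\Sigma$ with a sphere of symmetry.
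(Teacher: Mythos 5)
Your first two steps coincide with the paper's argument: substituting $f=1/H_{r,s}$ into \eqref{higher weight'}, using the hypothesis \eqref{cond rs} to discard the gradient term, invoking Lemma \ref{5.2} together with $\langle L,\partial/\partial t\rangle<0$, and then forcing equality both in the Heintze--Karcher inequality \eqref{3.17} and in Lemma \ref{5.2}. Up to that point you correctly obtain that $\Sigma$ lies in an outgoing shear-free null hypersurface and that $\chi=\alpha\sigma$ for some positive function $\alpha$.

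The final step, however, has a genuine gap. Lying in a single (outgoing) null hypersurface of symmetry does not identify $\Sigma$ as a sphere of symmetry: shear-freeness propagates along the null generators, so every cross-section of such a null cone is umbilical with respect to the generator, and the conclusion $\chi=\alpha\sigma$ by itself adds essentially nothing; moreover the deduction of Theorem \ref{thm Schwarzschild} from Theorem \ref{alex} that you cite merely replaces the words ``shear-free null hypersurface'' by ``null hypersurface of symmetry''---it does not single out a cross-section. To pin $\Sigma$ down one needs it to be the intersection of an outgoing \emph{and} an incoming null hypersurface of symmetry, exactly as in the corollary following Theorem \ref{thm Schwarzschild}, which for that reason requires two integral conditions. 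The paper closes this gap as follows: the Codazzi equation, combined with constant ambient curvature and the torsion-free condition $\zeta_L=0$, gives $0=\nabla_a\chi_{b}{}^{a}-\nabla_b\chi_{a}{}^{a}=-(n-2)\nabla_b\alpha$, so $\alpha$ is a positive constant; consequently $P_{r,s}=\alpha^{r}P_{0,s}$ and $T_{r,s}=\alpha^{r-1}T_{0,s}$, so the hypothesis \eqref{cond rs} descends to
\begin{equation*}
\int_{\Sigma} \frac{1}{H_{0,s}^{2}}\, Q\left(T_{0,s}\left(\nabla H_{0,s}\right), \underline{L}\right) d\mu \le 0,
\end{equation*}
which is the hypothesis of Theorem \ref{thm higher order alex'}; that theorem then places $\Sigma$ in an incoming shear-free null hypersurface as well, and only the intersection of the two null cones of symmetry yields a sphere of symmetry. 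Your proposal is missing this entire second half of the argument, and without it the conclusion does not follow.
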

\begin{proof}
Putting $f=\frac{1}{H_{r, s}}$ in \eqref{higher weight'}, we get
\begin{equation*}
\begin{split}
-\int_{\Sigma} \frac{ H_{r-1, s}}{H_{r, s}}\left\langle L, \frac{\partial}{\partial t}\right\rangle d \mu=& \frac{1}{2} \int_{\Sigma} Q(L, \underline{L}) d \mu+c(n, r, s)\int_{\Sigma}\frac{1}{H_{r, s}^2} Q\left(T_{r, s}(\nabla H_{r, s}), L\right) d \mu\\
\le& \frac{1}{2} \int_{\Sigma} Q(L, \underline{L}) d \mu,
\end{split}
\end{equation*}
where $c(n, r, s)>0$.
It follows from Lemma \ref{5.2} and the fact that $-\left\langle L, \frac{\partial}{\partial t}\right\rangle \ge 0$,
$$
(n-1) \int_{\Sigma} \frac{\left\langle L, \frac{\partial}{\partial t}\right\rangle}{\langle\vec{H}, L\rangle} d \mu=-(n-1) \int_{\Sigma} \frac{\left\langle L, \frac{\partial}{\partial t}\right\rangle}{\mathrm{tr} \chi} d \mu \le \frac{1}{2} \int_{\Sigma} Q(L, \underline{L}) d \mu.
$$
Comparing this with the spacetime Heintze-Karcher inequality \eqref{3.17}, we see that the equality is achieved. Therefore by the equality case in Lemma \ref{5.2}, we have $\chi=\alpha\sigma$ for some positive smooth function $\alpha$. Moreover, $\Sigma$ lies in an outgoing shear-free null hypersurface.

The Codazzi equation gives $\nabla_a \chi_{b d}-\nabla_b \chi_{a d}=\left\langle\overline {R}\left(e_a, e_b\right) L, e_d\right\rangle+\zeta_b \chi_{a d}-\zeta_a \chi_{b d}=0$ since the ambient curvature is constant and the torsion $\zeta_L=0$. From this, we have $0= \nabla_a \chi_{b a}-\nabla_b \chi_{a a}= \nabla_b \alpha - (n-1)\nabla_b \alpha $ and hence $\alpha$ is constant (note that $r, s>0$ implies $n\ge 3$), cf. also \cite[Theorem 2.2]{kwong2015inequality}.

It is then not hard to see that $P_{r, s}(\chi, \underline \chi)=\alpha^r P_{ 0, s }(\chi, \underline \chi)$ and $T_{r, s}=\alpha^{r-1}T_{0, s}$. So the condition \eqref{cond rs} becomes
$$
\int_{\Sigma} \frac{1}{H_{0, s}^2} Q\left(T_{0, s}\left(\nabla H_{0, s}\right), L\right) d \mu \le 0.
$$
This falls in the setting of Theorem \ref{thm higher order alex'}. Hence $\Sigma$ lies in an incoming shear-free null hypersurface.
As $\Sigma$ is the intersection of one incoming and one outgoing null hypersurface of symmetry, $\Sigma$ is a sphere of symmetry.
\end{proof}

\section{Integral Alexandrov theorems in the Schwarzschild spacetime}\label{sec Schwarzschild}

In this section, we prove some generalized Alexandrov theorems in the Schwarzschild spacetime. In this case, the ambient curvature is not constant and the mixed Newton tensors are no longer divergence free. Nevertheless, the divergence term $\nabla_aT^{ab}_{r, 0}Q(L, e_b)$ or
$\nabla_aT^{ab}_{0, s}Q(\underline L, e_b)$ still has a favourable sign under some natural assumptions on $\Sigma$, which enable us to derive a weighted Minkowski-type inequality. This is similar to the observation made in \cite[Proposition 8]{BE2013} in order to obtain a Minkowski-type inequality in the Riemannian Schwarzschild manifold.
\begin{lemma}[{\cite[Lemma 6.1]{wang2017Minkowski}}]\label{lem 6.1}
Let $\Sigma$ be a spacelike codimension-two submanifold in the Schwarzschild spacetime such that $d \zeta=0$ and $[\zeta]=0 \in H_{d R}^1(\Sigma)$.
Let $L$ and $\underline{L}$ be the null frame given by Proposition \ref{prop torsion} and $\left(Q^2\right)_{\alpha \beta}:=Q_\alpha^\gamma Q_{\gamma \beta}$.
Then the following statements are true:
\begin{enumerate}
\item\label{(1)}
If $Q(L, \underline{L}) \ge 0$, then $\left(\nabla_a T_{2, 0}^{ab}\right) Q\left(L, e_b\right) \le 0$ and $\left(\nabla_a T_{0, 2}^{ab}\right) Q\left(\underline{L}, e_b\right) \le 0$.
\item\label{(2)}
Suppose $\chi>0$ and $ Q^2 (L, v) Q(L, v) \le 0$ for any $v\in T\Sigma$, then $\left(\nabla_a T_{r, 0}^{ab}\right) Q\left(L, e_b\right) \le 0$ if $r \ge 3$.
\item\label{(3)}
Suppose $-\underline{\chi}>0$ and $ Q^2 (\underline{L}, v) Q(\underline{L}, v) \ge 0$ for any $v\in T\Sigma$, then $\left(\nabla_a T_{0, s}^{ab}\right) Q\left(\underline{L}, e_b\right) \le 0$ if $s \ge 3$.
\end{enumerate}
\end{lemma}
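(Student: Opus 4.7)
The plan is to compute the divergences $\nabla_a T_{r,0}^{ab}$ and $\nabla_a T_{0,s}^{ab}$ explicitly, contract with $Q(L, e_b)$ and $Q(\underline L, e_b)$ respectively, and then analyze the sign of the resulting scalar using the explicit form of the Schwarzschild Riemann tensor and the structure of the conformal Killing--Yano form $Q = r\, dr\wedge dt$. The first step is the divergence formula. Since the null frame $L, \underline L$ is chosen so that $\zeta_L = 0$ via Proposition \ref{prop torsion}, the Codazzi equation for the null second fundamental form simplifies to $\nabla_a \chi_{bc} - \nabla_b \chi_{ac} = \langle \bar R(e_a, e_b) L, e_c\rangle$. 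Combined with the Newton recursion $T_{r+1, 0} = P_{r+1, 0}\, \sigma - \chi\, T_{r, 0}$, an induction on $r$ produces a schematic formula
\begin{equation*}
\nabla_a T_{r, 0}^{ab} \;=\; \sum_{k=0}^{r-2} T_{r-2-k, 0}^{ac}\, (\chi^k)_c{}^d\; \bar R^{b}{}_{a d f}\, \sigma^{ef},
\end{equation*}
and symmetrically for $T_{0, s}$. In constant curvature this vanishes, recovering Lemma \ref{lem L}; in the Schwarzschild case the divergence is expressed purely in terms of the ambient Riemann tensor contracted against products of Newton tensors and powers of $\chi$ (or $\underline\chi$).

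Next I insert the explicit Schwarzschild curvature tensor. Because the Schwarzschild metric is Ricci-flat and of Petrov type D, $\bar R$ admits an algebraic decomposition in terms of $Q$ and the background metric with a radial coefficient proportional to $m/r^{n+2}$. After substitution and contraction with $Q(L, e_b)$, the resulting scalar splits into two types of contributions: the first is a positive multiple of $Q(L, \underline L)$ contracted against the traces of $T_{r-2-k, 0}\cdot \chi^k$; the second is a quadratic expression of the form $Q^2(L, v)\, Q(L, v)$ summed against the same Newton-tensor products viewed as symmetric bilinear forms on $T\Sigma$.

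Now I read off the signs. For case (\ref{(1)}) with $r = 2$ only the $k = 0$ term survives and the curvature contraction degenerates to a positive multiple of $Q(L, \underline L)$ paired against the trace of $\sigma$; the hypothesis $Q(L, \underline L) \ge 0$ then immediately yields the claimed inequality, and the $T_{0, 2}$ statement is symmetric upon reversing the roles of $L$ and $\underline L$. For cases (\ref{(2)}) and (\ref{(3)}) with $r \ge 3$ or $s \ge 3$, the additional terms with $k \ge 1$ are present; the positivity $\chi > 0$ (respectively $-\underline\chi > 0$) makes each $T_{r-2-k, 0}\cdot \chi^k$ a positive semi-definite symmetric tensor on $T\Sigma$, a classical Garding-type statement, and the hypothesis $Q^2(L, v)\, Q(L, v) \le 0$ (respectively $Q^2(\underline L, v)\, Q(\underline L, v) \ge 0$) forces the quadratic-in-$Q$ contribution to be non-positive, delivering the inequality.

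The principal obstacle is the second step: producing a clean algebraic decomposition of the Schwarzschild Riemann tensor, adapted to $Q$ and to the null frame $L, \underline L$, in which the contribution to $(\nabla_a T_{r, 0}^{ab})\, Q(L, e_b)$ factors as a sum of one term controlled by $Q(L, \underline L)$ and another controlled by the quadratic $Q^2(L, v)\, Q(L, v)$. This rests on the special algebraic structure of Petrov-type-D vacua, in which $Q$ serves as a principal null eigen-two-form of the Weyl tensor, so that contractions of $\bar R$ with $Q$ collapse back into expressions built from $Q$ itself. Once this decomposition is secured, the Garding-cone positivity of the Newton tensors together with the pointwise hypotheses close the sign argument uniformly across the three cases.
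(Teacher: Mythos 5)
First, note that the paper offers no proof of this lemma at all: it is quoted verbatim from \cite[Lemma 6.1]{wang2017Minkowski}, so the only ``proof'' in the text is the citation. Your proposed strategy --- compute $\nabla_a T^{ab}_{r,0}$ by induction from the Codazzi equation (simplified by $\zeta_L=0$), insert the explicit Schwarzschild curvature written in terms of $Q$, and read off signs --- is indeed the strategy of the cited reference, and is the spacetime analogue of \cite[Proposition 8]{BE2013}, as the present paper itself remarks. So the outline is aimed in the right direction.

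There is nevertheless a genuine gap, and you have named it yourself: the entire content of the lemma sits in the step you defer, namely the decomposition of the contraction of the Schwarzschild Riemann tensor against $Q(L,\cdot)$ into one piece controlled by $Q(L,\underline{L})$ and one piece controlled by $Q^2(L,v)\,Q(L,v)$. Asserting that such a decomposition must exist because the hypotheses of the lemma suggest it is circular --- the hypotheses were chosen because the explicit computation produces exactly those terms with exactly those signs, and without performing it you cannot certify the signs. Your one concrete sign claim already looks wrong: in case (1) you say the $r=2$ contraction reduces to a \emph{positive} multiple of $Q(L,\underline{L})$, which together with $Q(L,\underline{L})\ge 0$ would give $(\nabla_a T^{ab}_{2,0})Q(L,e_b)\ge 0$, the opposite of the assertion. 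Smaller slips would also surface once the induction is run: with the paper's normalization $T^{ab}_{r,s}=\partial P_{r,s}/\partial\chi_{ab}$ one has $T_{1,0}=\sigma$ and the recursion is $T_{r+1,0}=P_{r,0}\,\sigma-\chi\,T_{r,0}$, not $P_{r+1,0}\,\sigma-\chi\,T_{r,0}$; and for $r=2$ the divergence is the single contraction $\nabla_a T^{ab}_{2,0}=-\sigma^{ad}\langle \overline{R}(e_a,e^b)L,e_d\rangle$, which by Ricci-flatness is re-expressed through $\langle\overline{R}(L,e^b)L,\underline{L}\rangle$ and $\langle\overline{R}(\underline{L},e^b)L,L\rangle$; whether its pairing with $Q(L,e_b)$ collapses to a $Q(L,\underline{L})$ term alone must be checked against the curvature of the $(n+1)$-dimensional Schwarzschild metric, where the four-dimensional Petrov-type-D ``principal null eigen-two-form'' language you invoke does not transfer verbatim. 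Until that computation is carried out, none of the three sign statements is actually proved.
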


In the Schwarzschild case, we can obtain weighted higher order Minkowski inequalities instead of exact formulas.
\begin{theorem}\label{thm schwarz mink}
Let $\Sigma$ be a closed spacelike codimension-two submanifold in the Schwarzschild spacetime such that $d \zeta=0$ and $[\zeta]=0 \in H_{d R}^1(\Sigma)$. Let $L$ and $\underline{L}$ be the null frame given by Proposition \ref{prop torsion} and $f$ be a smooth positive function on $\Sigma$.
\begin{enumerate}
\item If $\Sigma$ satisfies assumption \eqref{(1)} or \eqref{(2)} in Lemma \ref{lem 6.1}, then for any $1 \le $ $r \le n-1$,
\begin{equation*}\begin{split}
\int_{\Sigma} f H_{r-1, 0}\left\langle L, \frac{\partial}{\partial t}\right\rangle d \mu+\frac{1}{2} \int_{\Sigma} f H_{r, 0} Q(L, \underline{L}) d \mu-\frac{1}{\binom{n-1}{r-1}(n-r)} \int_{\Sigma} Q\left(T_{r, 0}(\nabla f), L\right) d \mu \ge 0.
\end{split}\end{equation*}
\item If $\Sigma$ satisfies assumption \eqref{(1)} or \eqref{(3)} in Lemma \ref{lem 6.1}, then for any $1 \le $ $r \le n-1$,
\begin{equation*}\begin{split}
\int_{\Sigma} f H_{0, s-1}\left\langle\underline{L}, \frac{\partial}{\partial t}\right\rangle d \mu-\frac{1}{2} \int_{\Sigma} f H_{0, s} Q(L, \underline{L}) d \mu-\frac{1}{\binom{n-1}{s-1}(n-s)} \int_{\Sigma} Q\left(T_{0, s}(\nabla f), \underline{L}\right) d \mu \ge 0.
\end{split}\end{equation*}
\end{enumerate}
\end{theorem}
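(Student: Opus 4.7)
The plan is to retrace the proof of Theorem \ref{thm higher mink}, but to retain the divergence of the mixed Newton tensor as an error term (since in Schwarzschild the $T_{r, s}$ are no longer divergence-free) and then discard it with a definite sign via Lemma \ref{lem 6.1}. For a smooth positive weight $f$ on $\Sigma$, I would begin with the tangential divergence
\begin{equation*}
\nabla_a\!\left[fT_{r, 0}^{ab}Q(L, e_b)\right] = f(\nabla_a T_{r, 0}^{ab})Q(L, e_b) + fT_{r, 0}^{ab}\nabla_a Q(L, e_b) + T_{r, 0}^{ab}Q(L, e_b)\nabla_a f.
\end{equation*}
The middle term is exactly the object computed in the proof of \cite[Theorem 4.3]{wang2017Minkowski}: using the torsion-free identities $D_{e_a}L = \chi_a^c e_c$ and $(D_{e_a}e_b)^\perp = \tfrac{1}{2}\underline{\chi}_{ab}L + \tfrac{1}{2}\chi_{ab}\underline L$ coming from Proposition \ref{prop torsion}, together with the conformal Killing-Yano property of $Q = r\,dr\wedge dt$ and $\xi = -n\partial_t$, it simplifies to $T_{r, 0}^{ab}\sigma_{ab}\langle\partial_t, L\rangle + \tfrac{1}{2}T_{r, 0}^{ab}\chi_{ab}Q(L, \underline L)$; the potential cross-term $T_{r, 0}^{ab}\chi_a^c Q(e_c, e_b)$ vanishes because $T_{r, 0}^{ab}\chi_a^c$ is symmetric in $(b, c)$ while $Q(e_c, e_b)$ is antisymmetric.

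Integrating over $\Sigma$ kills the divergence on the left. Substituting the contractions $\sigma_{ab}T_{r, 0}^{ab} = (n-r)P_{r-1, 0}$ and $\chi_{ab}T_{r, 0}^{ab} = rP_{r, 0}$ from \cite[A.3, A.5]{wang2017Minkowski}, normalizing via $P_{\cdot, 0} = \binom{n-1}{\cdot}H_{\cdot, 0}$, using the elementary identity $\tfrac{r}{2}\binom{n-1}{r} = \tfrac{n-r}{2}\binom{n-1}{r-1}$, and applying the antisymmetry $Q(L, T_{r, 0}(\nabla f)) = -Q(T_{r, 0}(\nabla f), L)$ reorganizes the identity as
\begin{align*}
&\int_\Sigma fH_{r-1, 0}\langle L, \partial_t\rangle d\mu + \tfrac{1}{2}\int_\Sigma fH_{r, 0}Q(L, \underline L) d\mu - \tfrac{1}{\binom{n-1}{r-1}(n-r)}\int_\Sigma Q(T_{r, 0}(\nabla f), L) d\mu \\
&= -\tfrac{1}{\binom{n-1}{r-1}(n-r)}\int_\Sigma f(\nabla_a T_{r, 0}^{ab})Q(L, e_b) d\mu.
\end{align*}
Under hypothesis \eqref{(1)} of Lemma \ref{lem 6.1} (which covers $r = 2$) or \eqref{(2)} (which covers $r \ge 3$), the right-hand integrand is non-positive, so the right-hand side is $\ge 0$; the case $r = 1$ is automatic since $T_{1, 0} = \sigma$ is already divergence-free, and the identity reduces to an instance of Theorem \ref{thm higher mink}. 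This establishes part (1).

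Part (2) is proved in the same way after swapping $L \leftrightarrow \underline L$, $\chi \leftrightarrow \underline{\chi}$, and $T_{r, 0} \leftrightarrow T_{0, s}$; the sign flip in front of $Q(L, \underline L)$ is tracked by $\underline{\chi}_{ab}T_{0, s}^{ab} = sP_{0, s}$, and the divergence error term $(\nabla_a T_{0, s}^{ab})Q(\underline L, e_b) \le 0$ is supplied by Lemma \ref{lem 6.1}\eqref{(1)} for $s = 2$ or \eqref{(3)} for $s \ge 3$. The only real obstacle is careful bookkeeping of the torsion-free decompositions and the conformal Killing-Yano contraction, both of which have already been performed verbatim in \cite{wang2017Minkowski}; the genuinely new input is simply the observation that the divergence error, which vanished in constant curvature, now survives but carries a favorable sign under the hypotheses of Lemma \ref{lem 6.1}, turning the exact identity into the stated one-sided inequality.
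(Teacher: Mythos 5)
Your proposal is correct and follows essentially the same route as the paper: compute $\nabla_a[fT^{ab}_{r,0}Q(L,e_b)]$ as in Theorem \ref{thm higher mink}, keep the no-longer-vanishing term $f(\nabla_aT^{ab}_{r,0})Q(L,e_b)$, integrate, and discard it with the favourable sign from Lemma \ref{lem 6.1} (and symmetrically for part (2)). The only differences are cosmetic --- you record an exact identity with the divergence error isolated on one side before invoking the sign, and you are slightly more explicit than the paper about which hypothesis of Lemma \ref{lem 6.1} covers which range of $r$.
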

\begin{proof}
Note that $T_{1, 0}=T_{0, 1}={I}$ and so are divergence-free.
Let $T=T_{r, 0}$.

If $\Sigma$ satisfies assumption \eqref{(1)} or \eqref{(2)} in Lemma \ref{lem 6.1}, then a similar calculation as in Theorem \ref{thm higher mink} gives
\begin{equation*}
\begin{aligned}
\nabla_a\left[f T^{a b} Q\left(L, e_b\right)\right]
=& f\nabla_aT^{ab} Q(L, e_b)+fT^{a b} \sigma_{a b}\left\langle L, \frac{\partial}{\partial t}\right\rangle \\
& + \frac{1}{2}f\left(T^{a b} \chi_{a b}\right) Q(L, \underline{L}) +Q(T(L, \nabla f))\\
=& f\nabla_aT^{ab} Q(L, e_b)+(n-r)f P_{r-1, 0}\left\langle L, \frac{\partial}{\partial t}\right\rangle \\
& + \frac{r}{2}f P_{r, 0} Q(L, \underline{L}) +Q(L, T(\nabla f))\\
\le & (n-r)f P_{r-1, 0}\left\langle L, \frac{\partial}{\partial t}\right\rangle + \frac{r}{2}f P_{r, 0} Q(L, \underline{L}) +Q(L, T(\nabla f)).
\end{aligned}
\end{equation*}
Integrating, we get
\begin{equation*}
\int_{\Sigma} f H_{r-1, 0}\left\langle L, \frac{\partial}{\partial t}\right\rangle d \mu+\frac{1}{2} \int_{\Sigma} f H_{r, 0} Q(L, \underline{L}) d \mu-\frac{1}{ \binom{n-1}{r-1}(n-r)} \int_{\Sigma} Q\left(T_{r, 0}(\nabla f), L\right) d \mu\ge 0.
\end{equation*}

The second inequality can be obtained similarly.
\end{proof}

\begin{theorem}\label{thm schwarzschil alex}
Let $\Sigma$ be a closed embedded spacelike codimension-two submanifold in the Schwarzschild spacetime such that $d \zeta=0$ and $[\zeta]=0 \in H_{d R}^1(\Sigma)$.
Let $L$ and $\underline{L}$ be the null frame given by Proposition \ref{prop torsion}.
\begin{enumerate}
\item
Suppose $\Sigma$ is past incoming null embedded which satisfies the assumptions in either \eqref{(1)} or \eqref{(2)} in Lemma \ref{lem 6.1}, and that
$P_{ r, 0 }(\chi, \underline \chi)>0$ with
\begin{equation*}
\int_{\Sigma} \frac{1}{{P_{r, 0}}^2} Q\left(T_{r, 0}\left(\nabla P_{r, 0}\right), L\right) d \mu \le 0.
\end{equation*}
Then
$\Sigma$ lies in an outgoing null hypersurface of symmetry.
\item
Suppose $\Sigma$ is future incoming null embedded which satisfies the assumptions in either \eqref{(1)} or \eqref{(3)} in Lemma \ref{lem 6.1}, and that $(-1)^s P_{0, s}(\chi, \underline \chi)>0$ with
\begin{equation*}
\int_{\Sigma}\frac{1}{ {P_{0, s}}^{2}} Q\left(T_{0, s}(\nabla P_{0, s}), \underline L\right) d \mu\le 0.
\end{equation*}
Then $\Sigma$ lies in an incoming null hypersurface of symmetry.
\end{enumerate}
\end{theorem}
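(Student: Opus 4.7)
The plan is to follow the same two-step scheme that proves Theorems \ref{thm higher order alex} and \ref{thm higher order alex'}, but to replace the exact weighted Minkowski identity with the one-sided Minkowski inequality available in the Schwarzschild background (Theorem \ref{thm schwarz mink}). The crucial point is that the sign of the divergence term $(\nabla_a T^{ab}_{r,0})Q(L,e_b)$ identified by Lemma \ref{lem 6.1} goes in exactly the direction needed, so that after the weight is inserted the inequality still points the same way as the equality used in the constant-curvature argument. Chaining this Minkowski inequality into the spacetime Heintze-Karcher inequality (Theorem \ref{HK ineq'} for part (1), Theorem \ref{HK ineq} for part (2)) and invoking the corresponding rigidity case forces $\Sigma$ into a shear-free null hypersurface; Brendle's classification of closed umbilical hypersurfaces in the Schwarzschild time-slice, already used in Theorem \ref{thm Schwarzschild}, then upgrades ``shear-free'' to ``of symmetry''.

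Concretely, for part (1) I substitute $f=1/H_{r,0}$ into Theorem \ref{thm schwarz mink}(1). Using $T_{r,0}(\nabla(1/H_{r,0}))=-T_{r,0}(\nabla H_{r,0})/H_{r,0}^2$ together with $P_{r,0}=\binom{n-1}{r}H_{r,0}$, the $\nabla f$ contribution becomes a positive multiple of $\int_\Sigma\frac{1}{P_{r,0}^2}Q(T_{r,0}(\nabla P_{r,0}),L)\,d\mu$, which is $\le 0$ by hypothesis. Discarding this non-positive term from the Minkowski inequality yields
\begin{align*}
\tfrac12\int_\Sigma Q(L,\underline{L})\,d\mu \;\ge\; -\int_\Sigma \frac{H_{r-1,0}}{H_{r,0}}\left\langle L,\tfrac{\partial}{\partial t}\right\rangle d\mu.
\end{align*}
I then iterate the Newton-Maclaurin inequality (Lemma \ref{lem newton}) exactly as in the proof of Theorem \ref{thm higher order alex} to collapse $H_{r-1,0}/H_{r,0}$ down to $1/H_{1,0}=-(n-1)/\langle \vec H,L\rangle$; combined with $\langle L,\partial/\partial t\rangle<0$, this produces precisely the reverse of the Heintze-Karcher inequality \eqref{3.17}. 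Equality must therefore hold throughout, and the rigidity case of Theorem \ref{HK ineq'} places $\Sigma$ in an outgoing shear-free null hypersurface. Part (2) follows the same pattern, using Theorem \ref{thm schwarz mink}(2) with $f=1/H_{0,s}(\chi,-\underline{\chi})$, the sign identity $P_{0,s}(\chi,-\underline{\chi})=(-1)^s P_{0,s}(\chi,\underline{\chi})>0$, and the Heintze-Karcher inequality \eqref{3.14}.

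The main obstacle I expect is simply bookkeeping: ensuring that all the positivity needed to run the Newton-Maclaurin chain is available under the two alternative hypotheses of Lemma \ref{lem 6.1}. Under assumption (2) this is immediate, since $\chi>0$ implies $\chi\in\Gamma_{n-1}$ and hence $P_{k,0}>0$ for every $1\le k\le r$. Under assumption (1) only the case $r=2$ is actually captured by Lemma \ref{lem 6.1}(1), and the missing piece $P_{1,0}>0$ is compatible with the setup because the Heintze-Karcher step in Theorem \ref{HK ineq'} already requires $\langle\vec H,L\rangle<0$, that is $P_{1,0}=\mathrm{tr}\,\chi>0$. Once this positivity is in hand, no new ideas beyond those already deployed in Theorems \ref{thm schwarz mink}, \ref{thm higher order alex} and \ref{thm higher order alex'} are needed.
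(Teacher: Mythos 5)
Your proposal is correct and follows essentially the same route as the paper, whose proof of this theorem is just the remark that one repeats the arguments of Theorems \ref{thm higher order alex} and \ref{thm higher order alex'} with the exact weighted Minkowski identity replaced by the one-sided inequality of Theorem \ref{thm schwarz mink} and then invokes Lemma \ref{lem newton} and the Heintze--Karcher rigidity. Your additional bookkeeping on where the positivity for the Newton--Maclaurin chain comes from (automatic under Lemma \ref{lem 6.1}(2), and reduced to $P_{1,0}>0$ in the $r=2$ case under Lemma \ref{lem 6.1}(1)) is a sensible and accurate filling-in of details the paper leaves implicit.
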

\begin{proof}
The proof is similar to Theorem \ref{thm higher order alex} and Theorem \ref{thm higher order alex'}, by making use of Theorem \ref{thm schwarz mink} and Lemma \ref{lem newton}.
\end{proof}

\end{document}